\newcommand{\R}{\ensuremath{\mathbb{R}}}
\newcommand{\mz}{\ensuremath{\mathbb Z}}
\newcommand{\Z}{\ensuremath{\mathbb Z}}
\newcommand{\mr}{\ensuremath{\mathbb R}}
\newcommand{\mq}{\ensuremath{\mathbb Q}}
\newcommand{\Q}{\ensuremath{\mathbb Q}}
\newcommand{\C}{\ensuremath{\mathbb C}}
\newcommand{\F}{\ensuremath{\mathbb F}}
\newcommand{\A}{\ensuremath{\mathbb A}}
\newcommand{\mymod}{\ensuremath{\negthickspace \negmedspace \pmod}}
\newcommand{\shortmod}{\ensuremath{\negthickspace \negthickspace \negthickspace \pmod}}
\newcommand{\intR}{\int_{-\infty}^{\infty}}
\newcommand{\sumstar}{\sideset{}{^*}\sum}
\newcommand{\GL}{\ensuremath{\mathrm{GL}}}
\DeclareMathOperator{\sgn}{sgn}
\DeclareMathOperator{\Spec}{Spec}
\newcommand{\cF}{\ensuremath{\mathcal{F}}}
\newcommand{\cG}{\ensuremath{\mathcal{G}}}
\newcommand{\cH}{\ensuremath{\mathcal{H}}}
\newcommand{\cK}{\ensuremath{\mathcal{K}}}
\newcommand{\cL}{\ensuremath{\mathcal{L}}}
\newcommand{\Fbar}{\overline{\F}}
\newcommand{\Fr}{\mathrm{Fr}}
\newcommand{\real}{\mathop{\rm Re}}
\newcommand{\imag}{\mathop{\rm Im}}
\newcommand{\arcsinh}{\mathop{\rm arcsinh}}
\newcommand{\Gal}{\mathop{\rm Gal}}
\renewcommand{\bf}[1]{\mathbf{#1}}
\newcommand{\eps}{\varepsilon}
\newcommand{\psum}{\mathop{\sum\nolimits^+}}
\newcommand{\Tr}{{\rm Tr}}
\newtheorem{thm}{Theorem}[section]
\newtheorem{lem}[thm]{Lemma}
\theoremstyle{plain}		
	\newtheorem{mytheo}{Theorem} [section]
	\newtheorem{myprop}[mytheo]{Proposition}
	\newtheorem{mycoro}[mytheo]{Corollary}
     \newtheorem{mylemma}[mytheo]{Lemma}
	\newtheorem{mydefi}[mytheo]{Definition}
	\newtheorem{myconj}[mytheo]{Conjecture}
\theoremstyle{remark}
\numberwithin{equation}{section}
\numberwithin{figure}{section}
\begin{document}
\title{The Weyl bound for Dirichlet $L$-functions of cube-free conductor}
\author{Ian Petrow} 
\email{ian.petrow@math.ethz.ch}
\address{ETH Z\"urich\\
Department of Mathematics\\ 
R\"amistrasse 101\\
8092 Z\"urich\\
Switzerland}

\author{Matthew P. Young} 
\email{myoung@math.tamu.edu}
\address{Department of Mathematics\\
	  Texas A\&M University\\
	  College Station\\
	  TX 77843-3368\\
		U.S.A.}

 \thanks{The first author was supported by Swiss national science foundation grant PZ00P2\_168164. \\
 This material is based upon work supported by the National Science Foundation under agreement No. DMS-1702221 (M.Y.).  Any opinions, findings and conclusions or recommendations expressed in this material are those of the authors and do not necessarily reflect the views of the National Science Foundation.
 }

 \begin{abstract}
 We prove a Weyl-exponent subconvex bound for any Dirichlet $L$-function of cube-free conductor. We also show a bound of the same strength for certain $L$-functions of self-dual $\GL_2$ automorphic forms that arise as twists of forms of smaller conductor.
 \end{abstract}

 \maketitle

 \section{Introduction}
 Subconvex estimates for $L$-functions play a major role in modern analytic number theory.   
The first subconvex estimate is due to 
Weyl and Hardy-Littlewood, who showed
that \begin{equation}\label{WeylBound}\zeta(1/2+it) \ll_\eps (1+|t|)^{\frac{1}{6}+\eps}.\end{equation} 
The exponent $1/6$ appearing in \eqref{WeylBound} is a consequence of Weyl's differencing method for estimating exponential sums, introduced in 1916.  This method itself is important for studying equidistribution and has immediate applications to lattice point counting problems.

Today we call a subconvex bound of the form $L(1/2, \pi) \ll_{\eps} Q(\pi)^{1/6+\varepsilon}$ \emph{the Weyl bound}, where $Q(\pi)$ is the analytic conductor of the automorphic $L$-function $L(1/2, \pi)$.  The Weyl bound is only known in a few cases, notably for quadratic twists of certain self-dual $\GL_2$ automorphic forms; see \cite{CI} \cite{Ivic} \cite{YoungCubic} \cite{PetrowYoung} for example.

Estimating the Dirichlet $L$-functions $L(1/2,\chi)$ of conductor $q$ as $q \to \infty$ is analogous to estimating $\zeta(1/2+it)$ as $t \to \infty$, but the former is a harder and more arithmetic problem. In 1963, Burgess \cite{Burgess} 
showed by a completely different method that
\begin{equation}
\label{Burgess}L(1/2, \chi) \ll_\eps q^{\frac{3}{16}+\eps}.\end{equation} Burgess's method required new ideas, in particular it uses the Riemann Hypothesis for curves over finite fields. Note that the Burgess exponent of $3/16$ falls short of the exponent $1/6$ found by Weyl. Curiously, the exponent $3/16$ often re-occurs in the modern incarnations of these problems, see \cite{BlomerHarcosMichel} \cite{BlomerHarcos}  \cite{WuGL1timesGL2} \cite{WuGL1} for example. 

Even for the case of Dirichlet $L$-functions, the Burgess bound has only been improved in some limited special cases.  
In a breakthrough, Conrey and Iwaniec \cite{CI} 
obtained a Weyl-quality bound for \emph{quadratic} characters of odd conductor using techniques from automorphic forms and Deligne's solution of the Weil conjectures for varieties over finite fields.
Another class of results, such as \cite{BarbanLinnikTshudakov} and \cite{HeathBrownHybrid}, consider situations where the conductor $q$ of $\chi$ runs over prime  powers or otherwise has some special factorizations. 
Notably, Mili\'cevi\'c \cite{Milicevic} recently obtained a sub-Weyl subconvex bound when $q=p^n$ with $n$ large.

One of the main results of this paper  (see Corollary \ref{coro:WeylBound})  gives a Weyl-exponent subconvex bound for any Dirichlet $L$-function of cube-free conductor.  In particular, we give the first improvement on the Burgess bound for all Dirichlet $L$-functions of prime conductor.

\subsection{Statement of results}
\label{section:results}
 Let $q$ be a positive integer, and $\chi$ be a primitive Dirichlet character of conductor $q$.  
 Let $\mathcal{H}_{it_j}(m, \overline{\chi}^2)$  denote the set  (possibly empty) of Hecke-normalized Hecke-Maass newforms of level $m|q$, central character $\overline{\chi}^2$ and spectral parameter $t_j$.  For $f \in \mathcal{H}_{it_j}(m, \overline{\chi}^2)$, $f \otimes \chi$ is a self-dual newform of level $q^2$ and trivial central character. 
 \begin{mytheo}
\label{thm:mainthmMaassEisenstein}
 Let notation be as above.  Assume $q$ is cube-free and $\chi$ is not quadratic.
 Then for some $B > 2$ we have
\begin{equation}
 \sum_{m|q} \sum_{|t_j| \leq T} \sum_{f  \in \mathcal{H}_{it_j}(m, \overline{\chi}^2)} L(1/2, f  \otimes \chi)^3 
 + 
 \int_{-T}^{T} 
 |L(1/2 + it, \chi)|^6  dt
 \ll_{\varepsilon} T^{B} q^{1+\varepsilon}.
\end{equation}
\end{mytheo}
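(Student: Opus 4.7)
The strategy is the cubic moment approach pioneered by Conrey--Iwaniec~\cite{CI} for quadratic characters of prime conductor and extended in~\cite{PetrowYoung}. I would first open each $L(1/2, f \otimes \chi)^3$ via an approximate functional equation: since $f \otimes \chi$ is a self-dual $\GL_2$ newform of conductor $q^2$, the cube has effective length $\asymp q^3$. Splitting the cube as $L \cdot L^2$ (with a divisor function in the second factor) and using Hecke multiplicativity together with $\lambda_{f \otimes \chi}(n) = \lambda_f(n)\chi(n)$ away from ramified primes, one reaches a double sum of the schematic shape
\begin{equation*}
L(\tfrac12, f \otimes \chi)^3 \approx \sum_{m,n} \frac{\tau(n)\,\chi(mn)\,\lambda_f(m)\lambda_f(n)}{\sqrt{mn}}\, V\!\left(\frac{m}{q}\right) V\!\left(\frac{n}{q^2}\right).
\end{equation*}
The Eisenstein integral $\int |L(1/2+it,\chi)|^6\, dt$ fits into the same framework: for the unitary Eisenstein series $E_t$ of central character $\overline{\chi}^2$ and spectral parameter $t$ one has $L(s, E_t \otimes \chi) = L(s+it,\chi) L(s-it,\chi)$, so its cube at $s=1/2$ is precisely $|L(1/2+it,\chi)|^6$. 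This allows the cuspidal and continuous spectra to be handled in a single Kuznetsov-type identity at level $q$ with central character $\overline{\chi}^2$.

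I would next apply the Kuznetsov formula to the spectral sum $\sum_{m | q} \sum_{|t_j|\leq T} \sum_f (\cdots) \lambda_f(m)\lambda_f(n)$, with a smooth spectral weight essentially concentrated on $|t_j|\leq T$. The identity (delta) term produces the expected main term of size $T^B q^{1+\varepsilon}$ after collapsing the approximate functional equation weight. The remaining Kloosterman term takes the shape
\begin{equation*}
\sum_{q \mid c} \frac{1}{c} \sum_{m,n} \tau(n) \chi(mn)\, S_{\overline{\chi}^2}(m, n; c)\, \mathcal{I}\!\left(\frac{\sqrt{mn}}{c}\right),
\end{equation*}
with $\mathcal{I}$ a Bessel-type transform of the test function. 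The natural lengths of $m, n$ exceed $c$ in the relevant range, so Poisson summation in $m$ and $n$ modulo $c$ is profitable and converts the arithmetic sum into a short dual sum of complete character sums modulo $c$, weighted by the Poisson-transformed oscillatory kernel.

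The crux of the proof, and the main obstacle, is to establish uniform square-root cancellation for these complete character sums. Writing the cube-free modulus as $q = q_1 q_2^2$ with $q_1, q_2$ squarefree and coprime, and decomposing $c$ compatibly by the Chinese remainder theorem, the complete sum factors into local pieces. The part coprime to $q$ is a classical Weil-type Kloosterman sum for which square-root cancellation is standard; the $q_1$-part, modulo a squarefree multiple of $q_1$, falls within the Deligne-style algebraic geometry framework of Conrey--Iwaniec. The genuinely new ingredient is the $q_2^2$-part, living modulo a prime-square, which demands a $p$-adic stationary phase analysis for each $p \parallel q_2$: one must locate the $p$-adic critical points (whose structure depends delicately on the local behavior of $\chi$ modulo $p^2$), bound the resulting residual Gauss-type sums, and rule out degenerate configurations where the phase fails to be sufficiently regular. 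Once square-root cancellation is established at every local factor and reassembled by CRT, summing the resulting dual Poisson sums against the moduli $c$ yields the target bound $T^B q^{1+\varepsilon}$, where the polynomial $T$-growth comes from the Bessel transform and spectral density and is made no attempt to optimize.
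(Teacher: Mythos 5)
Your outline correctly identifies the opening moves (approximate functional equation, Bruggeman--Kuznetsov including the Eisenstein contribution, Poisson summation modulo $c$), and your instinct that the prime-square local factor calls for a ``$p$-adic stationary phase / linearization'' argument is essentially what the paper does (via $\chi(1+pt)=e_p(\ell_\chi t)$). But there is a genuine gap in how you close the argument after Poisson.

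The gap is that pointwise square-root cancellation on the complete character sums $G(m_1,m_2,m_3;c)$, summed trivially over the frequencies $m_j$ and over $c\equiv 0\pmod q$, does not produce $q^{1+\varepsilon}$. The dual sum contains roughly $\sqrt{N_1N_2N_3}$ triples $(m_1,m_2,m_3)$ (in the generic oscillatory range) and $C/q$ values of $c$; a pointwise bound of square-root strength on $G$ loses these volume factors. The mechanism that recovers them, and which is absent from your proposal, is the \emph{multiplicative Fourier decomposition} of the arithmetic weight: one writes $c=qr$, peels off the $r$ and $m_j$ parts coprime to $q$, and expands the $q$-part as a sum over Dirichlet characters $\psi\bmod q$. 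The sums over the coprime parts of $m_1,m_2,m_3,r$ then organize themselves into Dirichlet $L$-functions $L(s_1,\psi)L(s_2,\psi)L(s_3,\psi)L(s_4,\overline\psi)$, and the estimate ultimately rests on (i) the fourth moment bound $\sum_{\psi}|L(1/2,\psi)|^4\ll q^{1+\varepsilon}$ and (ii) the pointwise bound $|g(\chi,\psi)|\ll q$ on the specific twisted two-variable complete sum
\begin{equation*}
g(\chi,\psi)=\sum_{t,u\bmod q}\chi(t)\overline\chi(t+1)\overline\chi(u)\chi(u+1)\psi(ut-1),
\end{equation*}
uniformly over \emph{all} $\psi\bmod q$. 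It is this schematic bound $\sum_\psi|L(1/2,\psi)|^4\,g(\chi,\psi)\ll q^{1+\varepsilon}\cdot\max_\psi|g(\chi,\psi)|$ that makes the cubic moment close, not trivial summation against square-root-saved sums.

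A secondary point: you describe the $q_1$-part (prime, squarefree factor) as ``falling within the Deligne-style algebraic geometry framework of Conrey--Iwaniec,'' but the paper is explicit that Conrey--Iwaniec's argument does \emph{not} carry over once $\chi$ is no longer quadratic. Their method only controls $g(\chi,\psi)$ for all but one $\psi$, and they dispose of the exceptional $\psi=\chi$ by an elementary computation that uses $\chi^2=\chi_0$. For general $\chi$ the paper instead invokes the Fouvry--Kowalski--Michel trace-function machinery (middle extensions, geometric irreducibility of the cohomological transform sheaf, comparison of the one- and two-variable Grothendieck--Lefschetz formulas) to get $|g(\chi,\psi)|\ll p$ uniformly. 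So even the prime case requires new input beyond Conrey--Iwaniec, which your proposal does not acknowledge.
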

 
Theorem \ref{thm:mainthmMaassEisenstein} generalizes the celebrated result of Conrey and Iwaniec \cite{CI} which assumed $\chi$ is the quadratic character of odd, square-free conductor $q$.  The  central values appearing in Theorem \ref{thm:mainthmMaassEisenstein} are nonnegative \cite{Waldspurger}  \cite{Guo}, which is crucial for obtaining the Weyl-quality subconvex bound for these central values.

A potential defect of Theorem \ref{thm:mainthmMaassEisenstein} is that, although it is consistent with the Lindel\"{o}f hypothesis in the $q$-aspect, it is weak in the $T$-aspect.  However, if $T \ll q^{\varepsilon}$ then it is sharp.
As in the work of \cite{YoungCubic}, we can obtain a hybrid result for $T \gg q^{\varepsilon}$.
\begin{mytheo}
 \label{thm:mainthmHybridVersion}
Let conditions be as in Theorem \ref{thm:mainthmMaassEisenstein}, and suppose that  $T \gg q^{\delta}$ for some $\delta>0$.  Then
\begin{equation}
\label{eq:MaassEisensteinFormsBoundHybridVersion}
 \sum_{m|q} \sum_{T \leq t_j < T+1} \sum_{f  \in \mathcal{H}_{it_j}(m, \overline{\chi}^2)} L(1/2, f  \otimes \chi)^3 
 + 
 \int_{T}^{T+1} 
 |L(1/2 + it, \chi)|^6  dt
 \ll_{\varepsilon, \delta} T^{1+\varepsilon} q^{1+\varepsilon}.
\end{equation}
\end{mytheo}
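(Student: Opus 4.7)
The plan is to adapt the proof of Theorem \ref{thm:mainthmMaassEisenstein}, replacing the loose $T$-dependence $T^B$ by the essentially sharp $T^{1+\varepsilon}$ at the cost of restricting the spectral average to a short window. Following the strategy introduced for analogous cubic moments in \cite{YoungCubic}, I would localize in the spectral aspect at the outset rather than dilating to a long spectral range, then track the $T$-dependence carefully through each step of the Kuznetsov-to-Kloosterman reduction that already underlies Theorem \ref{thm:mainthmMaassEisenstein}.

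More concretely, the first step is to introduce a smooth non-negative weight of the form $h(t) = h_0(t-T) + h_0(t+T)$ for a fixed Schwartz majorant $h_0 \geq 0$ of the indicator of $[-1,1]$. By the positivity of the central $L$-values $L(1/2, f \otimes \chi)$ (Waldspurger--Guo) and of $|L(1/2+it,\chi)|^6$, the left-hand side of \eqref{eq:MaassEisensteinFormsBoundHybridVersion} is bounded by the full spectral moment weighted by $h(t_j)$, including the continuous-spectrum contribution. I would then open the cubes by an approximate functional equation and apply the Kuznetsov trace formula on $\Gamma_0(q^2)$ with central character $\overline{\chi}^2$, converting the weighted moment into sums of Kloosterman sums multiplied by the Bessel transform $(\BesselTransform h)(x)$. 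From that point the arithmetic skeleton of the argument is identical to Theorem \ref{thm:mainthmMaassEisenstein}: Voronoi summation, additive reciprocity, Poisson summation, and the algebraic-geometry input from the Weil conjectures are all insensitive to $T$ in their $q$-aspect content and contribute the factor $q^{1+\varepsilon}$ exactly as before.

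The essential new input is the stationary-phase analysis of $(\BesselTransform h)(x)$. Since $h$ is concentrated at $|t| \asymp T$, standard asymptotic expansions for the $J$- and $K$-Bessel transforms show that $\BesselTransform h$ is essentially supported on a dyadic range $x \asymp T^2$ of length $\asymp T$ and oscillates with a phase of size $O(T)$. Propagating this localization through the subsequent stationary-phase computations and changes of variable confines every archimedean frequency sum that arises to an interval of length essentially $T$ rather than a polynomial in $T$, producing the linear dependence on $T$ in the final bound.

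The main obstacle is the bookkeeping required to verify that every oscillatory integral encountered along the way admits a stationary-phase expansion uniform in $T$ with the correct archimedean conductor, and in particular that the truncation lengths coming from the approximate functional equation, in which $T$ enters through the analytic conductor of $f \otimes \chi$, mesh consistently with those coming from the Bessel transform of $h$. Once this uniformity is established, combining the $q^{1+\varepsilon}$ from the Weil bound with the $T^{1+\varepsilon}$ from the refined archimedean localization yields \eqref{eq:MaassEisensteinFormsBoundHybridVersion}.
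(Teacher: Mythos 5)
Your high-level strategy is exactly the paper's: replace the long spectral weight by one concentrated at $|t|\asymp T$ with essentially constant width, rely on nonnegativity to drop to the short window, and observe that all of the arithmetic (the character sum $G$, its Mellin transform $Z$, and the Deligne input for $g(\chi,\psi)$) is untouched, so the entire work is archimedean. The paper implements this by taking $h_0(t) = \operatorname{sech}\bigl(\tfrac{t-T}{\Delta}\bigr) + \operatorname{sech}\bigl(\tfrac{t+T}{\Delta}\bigr)$ with $\Delta=T^\varepsilon$, as in \cite{YoungCubic}, and then imports (and corrects) the Bessel-transform and stationary-phase analysis from \cite[\S 5, 7, 8]{YoungCubic} in place of Sections \ref{section:WeightFunctions1}--\ref{section:WeightFunctions2}.

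However, several of the specific claims in your middle paragraphs are off, and one of them papers over a genuine gap. First, the arithmetic chain in this paper is approximate functional equation $\to$ Bruggeman--Kuznetsov at level $q$ with nebentypus $\overline\chi^2$ (not level $q^2$) $\to$ Poisson summation $\to$ a Dirichlet-series analysis of the character sum $G$ via the function $Z$ defined in \eqref{Zdef}; there is no Voronoi summation or additive reciprocity in this argument, so the sentence describing that step does not match what would actually be run. Second, your description of the Bessel transform is incorrect and internally inconsistent: with the spectrally localized weight, $g^+(x)$ (denoted $B^+(x)$ in \cite{YoungCubic}) is not compactly supported on a dyadic range $x\asymp T^2$ of length $T$; rather it is negligible unless $x\gg \Delta T^{1-\varepsilon}$, and on its support it has an asymptotic expansion of the shape $\frac{\Delta T}{\sqrt x}\cos\bigl(x+\phi(x,T)\bigr)$ with $\phi(x,T)=-2T^2/x+\dots$. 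It is precisely this extra phase $\phi$, not compact support, that one must track through the subsequent stationary-phase computations of $K^\pm$ (see the corrected \cite[(8.4)]{YoungCubic} given in Section \ref{section:Epilogue}) to land on the $\Delta T\asymp T^{1+\varepsilon}$ saving. Third, your proposal does not address the terms with some $m_j=0$ after Poisson summation. In the hybrid regime these are not automatically negligible; $K^+$ is indeed tiny there, but $K^-$ requires a new estimate, namely $K^-(m_1,m_2,m_3,c)\ll \Delta N T^\varepsilon$, which the paper proves in Section \ref{section:Epilogue} and notes was omitted in \cite{YoungCubic}. Without such a bound the $m_j=0$ contribution would only give the crude $NT$ from the trivial estimate, losing a factor of $T/\Delta$ and spoiling the final $T^{1+\varepsilon}q^{1+\varepsilon}$. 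So the proposal correctly identifies the approach, but the claimed Bessel localization would not give the stated saving, and the degenerate-frequency terms are a real step that must be handled separately.
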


As a consequence, we obtain a Weyl-quality subconvex bound for Dirichlet $L$-functions simultaneously in $q$- and $t$-aspects:
\begin{mycoro}
\label{coro:WeylBound}
 Suppose $\chi$ has cubefree conductor $q$.  Then
 \begin{equation}
 |L(1/2 + it, \chi)| \ll_{\varepsilon}  q^{1/6+\varepsilon} (1+|t|)^{1/6+\varepsilon}.
\end{equation}
\end{mycoro}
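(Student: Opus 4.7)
The plan is to extract the pointwise bound of Corollary \ref{coro:WeylBound} from the integrated mean bounds of Theorems \ref{thm:mainthmMaassEisenstein} and \ref{thm:mainthmHybridVersion}. We first dispense with the case that $\chi$ is quadratic: here the $q$-aspect Weyl bound is Conrey--Iwaniec \cite{CI}, and the full hybrid Weyl bound follows from combining that with standard $t$-aspect and hybrid techniques.

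Assume henceforth that $\chi$ is not quadratic. Given $t_0 \in \R$, set $T = \max(2, |t_0|)$. If $|t_0| \leq 1$, Theorem \ref{thm:mainthmMaassEisenstein} with $T = 2$ (dropping the nonnegative Maass--Eisenstein sum) yields
\[
\int_{-2}^{2} |L(\tfrac12 + it, \chi)|^6 \, dt \ll_{\eps} q^{1+\eps}.
\]
If $|t_0| > 1$, Theorem \ref{thm:mainthmHybridVersion} with this $T$ gives
\[
\int_{T}^{T+1} |L(\tfrac12 + it, \chi)|^6 \, dt \ll_{\eps} (qT)^{1+\eps}.
\]
In either case, $t_0$ lies in a unit interval $I$ on which $\int_I |L(\tfrac12 + it, \chi)|^6 \, dt \ll_{\eps} (q(1+|t_0|))^{1+\eps}$.

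To convert this mean bound into a pointwise bound, I would use subharmonicity of $|L(s, \chi)|^6$ on the critical strip. For any $r > 0$ the sub-mean-value property yields
\[
|L(\tfrac12 + it_0, \chi)|^6 \leq \frac{1}{\pi r^2} \iint_{|s - (\tfrac12 + it_0)| \leq r} |L(s, \chi)|^6 \, dA(s).
\]
Taking $r = (q(1+|t_0|))^{-\eps}$ and estimating $|L(s, \chi)|$ off the critical line via the Phragmen--Lindel\"of convex bound $|L(\sigma + it, \chi)| \ll_{\eps} (q(1+|t|))^{(1-\sigma)/2 + \eps}$, a standard smoothing/contour-shift argument reduces the area integral to a factor of $(q(1+|t_0|))^{\eps}$ times a one-dimensional integral of $|L(\tfrac12 + it, \chi)|^6$ over a slightly enlarged unit interval $I' \supset I$. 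Substituting the mean bound and taking sixth roots yields the corollary.

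The one non-trivial step is this final local-maximum extraction; it is routine but requires care. In practice one can invoke Gabriel's inequality for $|L|^6$, or equivalently represent $L(\tfrac12 + it_0, \chi)$ via Mellin inversion against a rapidly decaying smooth weight and shift the contour, controlling tails by convexity. Either route is standard and avoids any deeper input than what has already been used to establish the hybrid sixth-moment bound.
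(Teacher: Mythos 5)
Your overall plan is correct and matches the intended deduction: drop the nonnegative cusp-form contribution in Theorems \ref{thm:mainthmMaassEisenstein} and \ref{thm:mainthmHybridVersion}, obtain a sixth-moment bound on a unit $t$-interval, and convert that to a pointwise bound. But the conversion step as you have written it does not work. After invoking the sub-mean-value inequality for $|L|^6$ on a small disc $D_r$, you propose to control the off-critical-line part of the area integral by the Phragm\'en--Lindel\"of bound $|L(\sigma+it,\chi)|\ll (q(1+|t|))^{(1-\sigma)/2+\eps}$. For $|\sigma-\tfrac12|\le r$ with $r=(qT)^{-\eps}$ this gives $|L(\sigma+it,\chi)|^6\ll (q(1+|t|))^{3/2+\eps}$ throughout the disc, which is far larger than the target $(q(1+|t_0|))^{1+\eps}$. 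Feeding that into the area integral merely reproduces the convexity bound $|L(\tfrac12+it_0,\chi)|\ll (q(1+|t_0|))^{1/4+\eps}$; there is no mechanism by which the pointwise convexity bound alone ``reduces the area integral to \dots a one-dimensional integral of $|L(\tfrac12+it,\chi)|^6$''. The sub-mean-value inequality plus convexity off the line is simply not enough, because nothing in that argument uses the improved input on the critical line at points $\sigma\ne\tfrac12$.

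What is actually needed --- and you name the right tool at the end, but do not develop it --- is a statement that the sixth moment on the lines $\mathrm{Re}(s)=\tfrac12\pm\delta$, for $\delta\asymp(\log q(1+|t_0|))^{-1}$, is still $\ll (q(1+|t_0|))^{1+\eps}$. This is where Gabriel's convexity inequality does the work. Apply it to $f(s)=L(s,\chi)\,e^{(s-\tfrac12-it_0)^2}$ on the strip $\tfrac12\le\mathrm{Re}(s)\le 1$: since $\int_{\mathbb{R}}|f(1+it)|^6\,dt\ll (q(1+|t_0|))^{\eps}$ (boundedness of $L$ on $\mathrm{Re}(s)=1$ and the Gaussian decay) and $\int_{\mathbb{R}}|f(\tfrac12+it)|^6\,dt\ll (q(1+|t_0|))^{1+\eps}$ (the unit-interval moment bound, with the Gaussian and convexity killing the tail), log-convexity of the $L^6$-means gives $\int_{\mathbb{R}}|f(\tfrac12+\delta+it)|^6\,dt\ll (q(1+|t_0|))^{1+\eps}$ for $\delta\ll(\log q(1+|t_0|))^{-1}$; the line $\mathrm{Re}(s)=\tfrac12-\delta$ is handled by the functional equation, which costs only a factor $(q(1+|t_0|))^{O(\delta)}=O(1)$. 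With these near-line moment bounds in hand, a Cauchy/Mellin contour representation $L(\tfrac12+it_0,\chi)^6=\frac{1}{2\pi i}\bigl(\int_{(\delta)}-\int_{(-\delta)}\bigr)L(\tfrac12+it_0+z,\chi)^6\,e^{z^2/\delta^2}\,\frac{dz}{z}$ (or, equivalently, the area-integral you wrote with $r\asymp\delta$) gives $|L(\tfrac12+it_0,\chi)|^6\ll (q(1+|t_0|))^{1+\eps}$. So you should replace the convexity-off-the-line step with this Gabriel argument rather than offer it as an interchangeable alternative. Two smaller points: for $1<|t_0|\ll q^{\eps}$ you should use Theorem \ref{thm:mainthmMaassEisenstein} (whose $T^B$ loss is then harmless), since Theorem \ref{thm:mainthmHybridVersion} is only stated for $T\gg q^{\eps}$; and for quadratic $\chi$ of cube-free conductor divisible by $4$ or $8$, \cite{CI} (odd square-free conductor) does not directly apply --- one should either cite a hybrid extension covering those cases or note, as the paper does after \eqref{eq:ctjweightformula}, that the non-quadratic hypothesis is cosmetic and the argument goes through with a minor modification of the Eisenstein weight at $t=0$.
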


 \begin{mycoro}\label{coro:twistmin}
 Let $p$ be an odd prime, and suppose $F$ is a Hecke-Maass newform of level $p^2$, trivial central character, and spectral parameter $t_F$.  If $F$ is not twist-minimal, then
 \begin{equation}
  L(1/2, F) \ll_{\varepsilon} (p(1+|t_F|))^{1/3+\varepsilon}.
 \end{equation}
\end{mycoro}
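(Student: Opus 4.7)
The plan is to realize $F$ as a twist $F = f \otimes \chi$ of a newform of strictly smaller conductor, and then to extract the bound on $L(1/2,F)$ by positivity from either Theorem \ref{thm:mainthmHybridVersion} or the quadratic-twist Weyl bound already in the literature.

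First I would unpack the hypothesis. The local theory of newforms at $p$, together with the assumption that $F$ is not twist-minimal, shows that there exists a primitive Dirichlet character $\chi$ of conductor exactly $p$ and a newform $f$ of level $m \mid p$ with $F = f \otimes \chi$. (Concretely, the local component $F_p$ must be of principal-series type $\pi(\mu,\mu^{-1})$ with $\mu$ ramified of conductor $p$, and one takes $\chi$ to be the global character inducing $\mu$.) Comparing central characters then forces $f$ to have central character $\overline{\chi}^2$. Now split into cases according to whether $\chi$ is quadratic.

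If $\chi$ is non-quadratic, then $\overline{\chi}^2$ is a nontrivial character modulo $p$, which rules out $m = 1$ and forces $m = p$. Hence $f \in \mathcal{H}_{it_F}(p, \overline{\chi}^2)$ appears as one summand on the left-hand side of Theorem \ref{thm:mainthmHybridVersion} applied with $q = p$ and $T$ chosen so that $t_F \in [T,T+1]$ and $T \asymp 1+|t_F|$ (when $|t_F|$ is too small for the hypothesis $T \gg q^\varepsilon$, apply Theorem \ref{thm:mainthmMaassEisenstein} with $T = p^\varepsilon$ instead). Since every term in the sum is nonnegative by Waldspurger and Guo, dropping all but the contribution of $f$ gives
\[
  L(1/2, F)^3 = L(1/2, f \otimes \chi)^3 \ll_{\varepsilon} T^{1+\varepsilon} p^{1+\varepsilon} \ll_\varepsilon (p(1+|t_F|))^{1+\varepsilon},
\]
and taking cube roots yields the corollary. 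If instead $\chi$ is quadratic, then $f$ has trivial central character and $m \in \{1,p\}$; this case falls outside Theorems \ref{thm:mainthmMaassEisenstein}--\ref{thm:mainthmHybridVersion} but is precisely the setting of the quadratic-twist Weyl bounds of Conrey--Iwaniec \cite{CI} and its extensions \cite{PetrowYoung}, which give the same estimate for $L(1/2, f \otimes \chi)$.

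The only genuinely delicate step is the structural claim that $F = f\otimes\chi$ with the specific shape just described; this is a local representation-theoretic bookkeeping exercise rather than an analytic obstacle, and the hard Weyl-aspect analytic work has already been carried out in Theorems \ref{thm:mainthmMaassEisenstein}--\ref{thm:mainthmHybridVersion} and in the cited companion results.
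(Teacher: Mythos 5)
Your argument matches the paper's own (very brief) proof: unpack the non-twist-minimality hypothesis to write $F = f\otimes\chi$ with $\chi$ primitive modulo $p$, compare central characters to place $f$ in $\mathcal{H}_{it_F}(m,\overline{\chi}^2)$, and drop all but one term in the nonnegative cubic moment. You are in fact somewhat more careful than the paper's one-line proof, which cites only Theorem \ref{thm:mainthmMaassEisenstein} and does not explicitly mention the need for Theorem \ref{thm:mainthmHybridVersion} in the large-$t_F$ regime, nor the quadratic-$\chi$ case (which falls outside the stated hypotheses of those theorems and must be deferred to \cite{CI}, \cite{PetrowYoung}, \cite{YoungCubic}); your handling of both points is the correct reading of the paper's intent.
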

Here the assumption that $F$ is not twist minimal means there exists a newform $f$ of level $m$ dividing $p$ and a primitive Dirichlet character $\chi$ of conductor $p$ so that $F = f \otimes \chi$.  The central character of $F$, which is trivial by assumption, equals $\chi^2$ times the central character of $f$.  
Hence $f \in \mathcal{H}_{i t_F}(m, \overline{\chi}^2)$, and so Theorem \ref{thm:mainthmMaassEisenstein} applies. 
Another observation is that for $F$ of level $p^2$ and trivial central character, the condition that $F$ is twist-minimal is equivalent to the assertion that the local representation of $\GL_2(\mq_{p})$ associated to $F$ is supercuspidal (see e.g.\ \cite[Table (4.20)]{Gelbart}). 

Theorems \ref{thm:mainthmMaassEisenstein} and \ref{thm:mainthmHybridVersion} (and hence Corollary \ref{coro:twistmin}) also carry over to holomorphic modular forms.  
Let $S_{\kappa}(q, \overline{\chi}^2)$ denote the space of cusp forms of level $q$, central character  $\overline{\chi}^2$, and even weight $\kappa \geq 2$.  
 Let $\mathcal{H}_{\kappa}(m,\overline{\chi}^2)$ denote the set of Hecke-normalized newforms of level $m|q$ and central character $\overline{\chi}^2$. 
\begin{mytheo}
\label{thm:mainthmHolomorphic}
 Let notation be as above, with $q$ cube-free.  Then
 \begin{equation}
 \label{eq:holomorphicFormsBoundkappaSmallVersion}
\sum_{m|q} \sum_{\kappa \leq T}  \sum_{f \in \mathcal{H}_{\kappa}(m,\overline{\chi}^2)} L(1/2, f \otimes \chi)^3 \ll_{\eps} T^{B} q^{1+\varepsilon},
 \end{equation}
 for some $B > 2$.  Moreover, if there exist $\delta>0$ such that 
 $T \gg q^{\delta}$, then we have
 \begin{equation}
 \label{eq:holomorphicFormsBoundkappaHybridVersion}
\sum_{m|q} \sum_{T \leq \kappa < T + 1}  \sum_{f \in \mathcal{H}_{\kappa}(m,\overline{\chi}^2)} L(1/2, f \otimes \chi)^3 \ll_{\eps, \delta} T^{1+\varepsilon} q^{1+\varepsilon}.
 \end{equation}
 \end{mytheo}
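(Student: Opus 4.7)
The plan is to mirror the proofs of Theorems \ref{thm:mainthmMaassEisenstein} and \ref{thm:mainthmHybridVersion}, substituting the Petersson trace formula and its $J$-Bessel kernel for the Kuznetsov trace formula and its Bessel transform.  No continuous spectrum appears in the holomorphic setting, which slightly simplifies the spectral side.  Positivity of the individual cubes is still at our disposal: the root number of $L(s, f \otimes \chi)$ equals $i^{-\kappa}\chi(-1)$, so when it is $-1$ the central value vanishes, and when it is $+1$ the central value is non-negative by \cite{Waldspurger} \cite{Guo}.  Hence $L(1/2, f \otimes \chi)^3 \geq 0$ for every term on the left of \eqref{eq:holomorphicFormsBoundkappaSmallVersion} and \eqref{eq:holomorphicFormsBoundkappaHybridVersion}, so one is free to insert any non-negative smooth spectral weight.

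The main steps I would carry out are: (i) apply an approximate functional equation to each factor of $L(1/2, f \otimes \chi)$ and collapse the three resulting Hecke eigenvalues by Hecke multiplicativity, producing a sum of the shape $\sum_n a(n)\chi(n) \lambda_f(n)$ of length $\asymp (q\kappa)^{3+\eps}$ with explicit arithmetic coefficients; (ii) detect newforms of each level $m \mid q$ via the standard inclusion--exclusion, using the cube-free hypothesis to control the conductor drop of $f \otimes \chi$; (iii) apply the Petersson trace formula at fixed weight $\kappa$, obtaining sums of Kloosterman sums $S_{\overline{\chi}^2}(\ast, n; c)$ weighted by $J_{\kappa-1}(4\pi\sqrt{\ast n}/c)$; (iv) smoothly average in $\kappa$, either over $[T, T+1]$ for the hybrid statement \eqref{eq:holomorphicFormsBoundkappaHybridVersion} or dyadically up to $T$ for the small-$T$ statement \eqref{eq:holomorphicFormsBoundkappaSmallVersion}.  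The $\kappa$-average of $i^{-\kappa}J_{\kappa-1}$ produces an oscillatory test function with the same amplitude/phase profile as the Kuznetsov Bessel transform at spectral parameter $\asymp T$, so that the delta-method, Poisson summation, and character-sum analyses of Theorems \ref{thm:mainthmMaassEisenstein} and \ref{thm:mainthmHybridVersion} can be imported without essential change.  The small-$T$ bound \eqref{eq:holomorphicFormsBoundkappaSmallVersion} with unspecified $B > 2$ then follows from \eqref{eq:holomorphicFormsBoundkappaHybridVersion} by dyadic summation for $q^\eps \ll \kappa \ll T$, together with a trivial convexity bound on each central value combined with $\dim S_\kappa(q, \overline{\chi}^2) \ll \kappa q^{1+\eps}$ in the complementary range $\kappa \ll q^\eps$.

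The principal technical obstacle will be matching the $\kappa$-averaged Bessel kernel $\int i^{-\kappa} J_{\kappa-1}(4\pi\sqrt{mn}/c) h(\kappa)\, d\kappa$ to the Kuznetsov Bessel transform closely enough that the subsequent steps can be cited rather than reproven.  This is a standard but delicate application of the uniform asymptotics of $J_\nu(x)$ for $\nu$ and $x$ both large, coupled with stationary phase in $\kappa$: outside the transition range $\sqrt{mn}/c \asymp T$ the kernel is either negligibly small or oscillates at a controllable frequency, exactly paralleling the behavior on the Kuznetsov side.  Once this kernel comparison is in place, no new arithmetic ingredient is needed beyond what already appears in the Maass case.
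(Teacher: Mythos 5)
Your overall framework---nonnegativity via root numbers, approximate functional equation, Petersson trace formula, $\kappa$-averaging, then importing the Poisson-summation and character-sum machinery---matches the paper's sketch, and the identification of the kernel-matching step as the main technical hurdle is correct. However, there is a genuine gap in how you propose to obtain the small-$T$ bound \eqref{eq:holomorphicFormsBoundkappaSmallVersion}.

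You want to derive \eqref{eq:holomorphicFormsBoundkappaSmallVersion} from the hybrid bound \eqref{eq:holomorphicFormsBoundkappaHybridVersion} by dyadic summation for $q^{\varepsilon} \ll \kappa \ll T$, and cover the range $\kappa \ll q^{\varepsilon}$ by convexity together with $\dim S_{\kappa}(q,\overline{\chi}^2) \ll \kappa q^{1+\varepsilon}$. This last step does not close. The convexity bound gives $L(1/2, f\otimes\chi) \ll (q^2\kappa^2)^{1/4+\varepsilon}$, so the cube is $\ll q^{3/2+\varepsilon}\kappa^{3/2+\varepsilon}$; summing over $\ll \kappa q^{1+\varepsilon}$ forms and over $\kappa \ll q^{\varepsilon}$ produces a total of order $q^{5/2+\varepsilon}$. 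But \eqref{eq:holomorphicFormsBoundkappaSmallVersion} demands $\ll T^B q^{1+\varepsilon}$ with a fixed $B$, which for bounded $T$ (say $T=2$) requires $\ll q^{1+\varepsilon}$. Convexity is off by a full factor of $q^{3/2}$, which is the entire content of the theorem in the $q$-aspect; this cannot be absorbed into $T^B$ when $T$ is small. The paper does not take this shortcut. Instead, for \eqref{eq:holomorphicFormsBoundkappaSmallVersion} it repeats the full cubic-moment argument directly, adapting Section \ref{section:WeightFunctions1} (an analogue of Lemma \ref{lemma:Vproperties} with slightly different Stirling analysis), replacing Bruggeman--Kuznetsov by the Petersson formula plus Poisson summation over $\kappa$ (citing \cite[p.85--86]{IwaniecClassical}), deriving analogues of the $J^{\pm}$ estimates from Sections \ref{section:Jplus}--\ref{section:Jminus}, and then running Sections \ref{section:WeightFunctions2}--\ref{section:Finale} unchanged. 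The point is that the archimedean test function must simultaneously capture all $\kappa \leq T$ (as the function $h_0$ of \eqref{eq:h0def} does on the Maass side), rather than being pieced together from unit windows that are only available when $T \gg q^{\varepsilon}$.

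Your treatment of the hybrid case \eqref{eq:holomorphicFormsBoundkappaHybridVersion}, by contrast, agrees with the paper: there the framework of \cite{YoungCubic} already places holomorphic and Maass forms on equal footing, so that proof is essentially identical to Theorem \ref{thm:mainthmHybridVersion}.
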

The sum over $\kappa$ in \eqref{eq:holomorphicFormsBoundkappaHybridVersion} has at most one non-zero term, and is often empty.  Nonetheless, we include it so that \eqref{eq:holomorphicFormsBoundkappaHybridVersion}  aligns with the form of \eqref{eq:MaassEisensteinFormsBoundHybridVersion}.

\subsection{Remarks}
The reader may wonder why $q$ is restricted to be cube-free in the above results (coincidentally, the Burgess bound for character sums is stronger in certain ranges in case the conductor is cube-free, e.g. see \cite[Thm.\ 12.6]{IK}).  To explain this restriction on $q$, we need to outline the proof of Theorem \ref{thm:mainthmMaassEisenstein}.  As in the work of Conrey and Iwaniec \cite{CI}, we apply some standard tools: approximate functional equations, the Petersson/Kuznetsov formula, and Poisson summation.  The dual sum after Poisson summation in large part boils down to a certain character sum defined by
\begin{equation}
\label{eq:gdef}
 g(\chi,\psi) = \sum_{t,u \shortmod{q}} \chi(t) \overline{\chi}(t+1) \overline{\chi}(u) \chi(u+1) \psi(ut-1),
\end{equation}
where $\psi$ is a Dirichlet character modulo $q$.  
After the above steps, the problem essentially reduces to bounding
\begin{equation}
\label{eq:sketchdualmoment}
 \sum_{\psi \shortmod{q}} | L(1/2, \psi)|^4  g(\chi, \psi).
\end{equation}
Since the fourth moment of Dirichlet $L$-functions is of size $O_{\eps}(q^{1+\varepsilon})$, the sum \eqref{eq:sketchdualmoment} can be bounded by $O_{\eps}(q^{1+\varepsilon})$ times the maximum value of $|g(\chi,\psi)|$ as $\psi$ varies.  Here, the Riemann hypothesis of Deligne \cite{DeW2} plays a crucial role in proving $|g(\chi,\psi)| \ll_{\eps} q^{1+\varepsilon}$ for $q$ prime (see Section \ref{section:gboundp}), which then extends to square-free $q$ by multiplicativity.  In case $q=p^2$, we establish $|g(\chi,\psi)| \ll_{\eps} q^{1+\varepsilon}$ by elementary means (see Section \ref{section:gboundpsquared}), and hence this bound on $g(\chi,\psi)$ holds for cube-free $q$.  However, for $q=p^3$, it is no longer true that $|g(\chi,\psi)| \ll_{\eps} q^{1+\varepsilon}$ for all primitive $\psi$.  Rather, there exist many characters of conductor $p^3$ so that $|g(\chi,\psi)| \gg q p^{1/2}$.  
Barring an improved estimate for the sub-sum of \eqref{eq:sketchdualmoment} coming from these ``bad'' characters $\psi$, this extra factor of $p^{1/2}$ would propagate through all the estimates, and hence would presumably lead to (at best) the bound
\begin{equation}\label{eq:L12bad}
 |L(1/2+it, \chi)|^6 \ll_{\eps} q^{1+\varepsilon} p^{1/2} \qquad (q=p^3).
\end{equation}
This would imply $|L(1/2+it,\chi)| \ll_{\eps} q^{\frac{7}{36} + \varepsilon}$, and note $\frac{7}{36} > \frac{3}{16}$, so this would not improve on the Burgess bound. 

The analysis of $g(\chi,\psi)$ becomes more complicated for $q=p^n$ with larger $n$.  Since there are complementary methods well-suited to treat the depth-aspect (as in \cite{Milicevic} \cite{BlomerMili}, and  other papers), we content ourselves here with the restriction to $q$ cube-free.

Remark added August 28, 2019:
In \cite{PetrowYoungFourth}, written after the first version of the present paper, we have extended all the cubic moment bounds stated in Section \ref{section:results} to hold for arbitrary $q$.  More precisely, \cite{PetrowYoungFourth} contains proofs of Conjectures \ref{conj:characterSumBoundIntermediateCase} and \ref{conj:Zproperties} from the present paper, which are shown here to imply the cubic moment bounds for general $q$.

\subsection{Organization of the paper}

For the rest of the paper, we will focus almost entirely on the proof of Theorem \ref{thm:mainthmMaassEisenstein}.  The proof of Theorem \ref{thm:mainthmHybridVersion} follows the same approach, and the only change is in the behavior of the  weight function on the spectrum.  These archimedean aspects were already developed in \cite{YoungCubic}, so we can largely quote those results.  For brevity, we sketch the proof in Section \ref{section:Epilogue}.

The analogous results on the holomorphic forms (Theorem \ref{thm:mainthmHolomorphic}) are also similar to the Maass form cases, so we briefly sketch the necessary changes in Section \ref{section:Epilogue}.

\subsection{Convention}
\label{section:conventions}
The notation $A\ll B$ for quantities $A$ and $B$ means that there exists a constant $K$ such that $|A| \leq K B$ for all relevant $A$ and $B$, the value of which in each instance should be clear from context. If $p_1,\ldots,p_n$ are parameters, then $\ll_{p_1,\ldots, p_n}$ indicates that the constant $K$ may depend on $p_1,\ldots,p_n$. Implied constants also depend on the choices of implied constants already established in the proof, but we suppress this from this notation.  For example, if $|A| \leq K_1 B$ and $|B| \leq K_2 C$, then $A \ll C$ with $K_3 = K_1K_2$. A major purpose of this notation is to avoid excessive labelling of implied constants. The appearance of the parameter $\eps$ among the $p_i$ plays a similar role: each of these $\eps$ represents a quantity $\eps_{j}$ that may be taken to be arbitrarily small, and which may depend on all previous $\eps_1, \ldots, \eps_{j-1}$ appearing in the proof.

\subsection{Acknowledgements}
We would like to thank Emmanuel Kowalski for explaining his work on $\ell$-adic trace functions to us, which plays a crucial role in Section \ref{section:gboundp} of this paper. We also thank Philippe Michel for pointing out an oversight in an earlier version of that section of the paper, and for proposing a solution to it. Part of this work was accomplished during our visit to the Hausdorff Center in Bonn for the summer school on $L$-functions in 2018. We thank the Center for its support. Lastly, we thank the referees for their careful and thorough reading of this paper.

\section{Automorphic forms and $L$-functions}
\subsection{Cusp forms}
Let $q$ be a positive integer, and $\psi$ a Dirichlet character modulo $q$. For $t_j \in \R \cup i[-1/2,1/2]$ let $S_{it_j}(q,\psi)$ be the space of Maass cusp forms of level $q$, central character $\psi$, and spectral parameter $t_j$. 
Similarly, for $\kappa \geq 2$ we let $S_\kappa(q,\psi)$ be the space of holomorphic cusp forms of weight $\kappa$. Any $f \in S_{it_j}(q, \psi)$ admits a Fourier expansion 
\begin{equation}
\label{eq:FourierExpansionMaass}
 f(z) = 2 \sqrt{y} \sum_{n \neq 0} \lambda_{f}(n) e(nx) K_{it_j}(2 \pi |n| y),
\end{equation}
and similarly, if $f \in S_\kappa(q,\psi)$ we may write 
\begin{equation}
\label{eq:FourierExpansionHolomorphic}
 f(z) = \sum_{n=1}^{\infty} \lambda_f(n) n^{\frac{\kappa-1}{2}} e(nz).
\end{equation}
Now let $\cH_{it_j}(m,\psi)$ be the set of Hecke-Maass \emph{newforms} of level $m|q$, normalized so that $\lambda_f(1)=1$, and define similarly $\cH_\kappa(m,\psi)$. Recall the Petersson inner product on $S_{it_j}(q,\psi)$ or $S_\kappa(q,\psi)$ defined by 
\begin{equation*}
 \langle f, g \rangle_q := \int_{\Gamma_0(q) \backslash \mathbb{H}} y^{\kappa} f(z) \overline{g}(z) \frac{dx dy}{y^2},
\end{equation*}
where in the former case we take $\kappa =0$. With this normalization of the inner product, we have for any $f \in \cH_{it_j}(m,\psi)$ or $\cH_\kappa(m,\psi)$ by Rankin-Selberg theory and work of Iwaniec and Hoffstein-Lockhart \cite{IwaniecSmallEigenvalues,HoffsteinLockhart} that 
\begin{equation}
\label{eq:PeterssonNormMaassCuspForm}
  \langle f, f \rangle_q = \frac{q}{\cosh(\pi t_j)} (q(1+|t_j|))^{o(1)},
 \quad
 \text{or}
 \quad
 \langle f, f \rangle_q = \frac{q \Gamma(\kappa)}{ (4 \pi)^{\kappa-1}} (q \kappa )^{o(1)}.
\end{equation} 
In fact, we only use the upper bounds implicit in \eqref{eq:PeterssonNormMaassCuspForm}, which are due to Iwaniec. 

Any newform $f \in \mathcal{H}_*(m, \psi)$ satisfies the Hecke relation 
\begin{equation}
 \label{eq:HeckeRelation}
 \lambda_f(n_1) \lambda_f(n_2) = \sum_{d|(n_1,n_2)} \lambda_f(n_1 n_2/d^2) \psi(d).
\end{equation}

Recall that a Hecke-Maass newform $f$ is called even if $\lambda_f(-1) = 1$, and odd if $\lambda_f(-1) = -1$.  It is easy to see that the parity of $f \otimes \chi$ is the parity of $f$ times the parity of $\chi$.

By Atkin-Lehner-Li theory \cite{AtkinLehner, AL78} we have the following direct sum decomposition:
\begin{equation}
\label{eq:orthogonaldecompositionMaass}
 S_{it_j}(q, \psi) = \bigoplus_{\ell m=q} \bigoplus_{f \in \mathcal{H}_{it_j}(m, \psi)} S_{it_j}(\ell,f,\psi),
\end{equation}
where $S_{it_j}(\ell,f,\psi) = \text{span} \{ f(d z): d | \ell \}$, and similarly for holomorphic forms, where each instance of $it_j$ is replaced by $\kappa$. The direct sums in \eqref{eq:orthogonaldecompositionMaass} are orthogonal with respect to the Petersson inner product. 

For any $f \in \mathcal{H}_{it_j}(m,\overline{\chi}^2)$ with $m|q$, we have by \cite[Prop.\ 3.8(iii)]{JacquetLanglands} that $f \otimes \chi \in \cH_{it_j}(q^2,1)$, and similarly for holomorphic forms. See also \cite[Thm.\ 3.1(ii)]{AL78} for a classical proof of this fact.

\subsection{Eisenstein series}
\label{section:EisensteinSeriesDefinitions}
Let \begin{equation}
\label{eq:EisensteinDefinition}
 E_{\chi_1, \chi_2}(z,1/2+it) = e_{\chi_1, \chi_2}(y, 1/2+it) + 
 2 \sqrt{y} \sum_{n \neq 0} \lambda_{\chi_1, \chi_2, t}(n) e(nx) K_{it }(2 \pi |n| y),
\end{equation}
where $\chi_1, \chi_2$ are primitive Dirichlet characters modulo $q_1, q_2$, respectively,  
\begin{equation*}
\lambda_E(n)
= 
 \lambda_{\chi_1, \chi_2, t}(n)
 =
 \chi_2(\sgn(n)) \sum_{ab=|n|} \chi_1(a) \overline{\chi_2}(b) a^{-it} b^{it},
\end{equation*}
and $e_{\chi_1, \chi_2}(y,s) = c y^{s} + c' y^{1-s}$, for certain constants $c,c'$. Note that the definition \eqref{eq:EisensteinDefinition} corresponds to the ``completed'' Eisenstein series $E_{\chi_1, \chi_2}^*(z, 1/2+it)$ in \cite{YoungEisenstein}, so some care is needed when we quote results from that reference. 
Then $E_{\chi_1,\chi_2}$ is of level $m=q_1q_2$ and central character $\chi_1 \overline{\chi_2}$, and is an eigenfunction of all the Hecke operators, and so \eqref{eq:HeckeRelation} also holds for $\lambda_E(n)$. These are, by definition, the newform Eisenstein series. For two arbitrary Dirichlet characters $\chi$ and $\psi$, let us write $\chi \simeq \psi$ if the underlying primitive characters of $\chi$ and $\psi$ are equal.
With this notation, we denote the set of newform Eisenstein series by $$\cH_{it, \text{Eis}}(m,\psi) = \{E_{\chi_1, \chi_2}(z, 1/2+it) : q_1q_2=m \text{ and } \chi_1\overline{\chi_2} \simeq \psi\}.$$ 
In particular, if $E \in \cH_{it,\text{Eis}}(m,\psi)$, then $\lambda_E(1)=1$ and the Hecke relations hold for $\lambda_E(n)$ exactly as they do for $\lambda_f(n)$. 

The space $\mathcal{E}_{it}(q,\psi)$, for $t \neq 0$, admits a formal inner product $\langle \cdot, \cdot \rangle_{\text{Eis}}$ induced by $$\tfrac{1}{4 \pi} \langle E_{\mathfrak{a}}(z,1/2+it, \psi), E_{\mathfrak{b}}(z, 1/2+it, \psi) \rangle_{\text{Eis}} = \delta_{\mathfrak{a} = \mathfrak{b}}.$$  With this definition of the inner product, we have in perfect analogy to \eqref{eq:PeterssonNormMaassCuspForm} that 
\begin{equation}
\label{eq:Echi1chi2InnerProduct}
 \langle E_{\chi_1, \chi_2}(z, 1/2+it), E_{\chi_1, \chi_2}(z, 1/2+it) \rangle_{\text{Eis}} = 
 \frac{q^{1+o(1)}}{\cosh(\pi t )} |L(1+2it, \chi_1 \chi_2)|^2 .
\end{equation}
This equation can be deduced from \cite[(8.13), (8.10)]{YoungEisenstein}, keeping in mind the normalization of the completed Eisenstein series (see \cite[\S 4]{YoungEisenstein}).

There exists an Atkin-Lehner-Li theory for the space $\mathcal{E}_{it}(q,\psi)$, for $t \neq 0$, and a decomposition into spaces of old forms completely analogous to \eqref{eq:orthogonaldecompositionMaass}. This decomposition is orthogonal with respect to $\langle \cdot, \cdot \rangle_{\text{Eis}}$, and is explained thoroughly in \cite[\S 8]{YoungEisenstein}.

Lastly, we define, for $\chi_1 \overline{\chi_2} \simeq \overline{\chi}^2$ with $\chi$ primitive of conductor $q$,
\begin{equation}
 \label{eq:LfunctionEisensteinSeriesTwistFormula}
 L(s, E_{\chi_1, \chi_2, t} \otimes \chi)
 = \sum_{n=1}^{\infty} \frac{\lambda_{\chi_1, \chi_2, t}(n) \chi(n)}{n^s}
= L(s+it, \chi \chi_1) L(s-it,  \chi \overline{\chi_{2}}).
\end{equation}
We claim that \eqref{eq:LfunctionEisensteinSeriesTwistFormula} defines the true automorphic $L$-function of conductor $q^2$. To see this, check that locally all the solutions to $\chi_1 \overline{\chi_2} \simeq \overline{\chi}^2$ with $q_1 q_2 | q$ arise from $\chi_1 = 1, \chi_2 = \chi^2$ or $\chi_2 = 1$, $\chi_1 = \overline{\chi}^2$.  Hence both $\chi \chi_1$ and $\chi \overline{\chi_2}$ are primitive of conductor $q$.

\subsection{Root Numbers}\label{rootnumber} Although the theorems in this paper do not depend on the precise values of the root numbers of the forms $f \otimes \chi$, formulas for these are useful when interpreting the main results of this paper. If $\chi$ is primitive modulo $q$, $m \mid q$ and $f \in \mathcal{H}_{it_j}(m,\overline{\chi}^2)$ or $f \in  \mathcal{H}_{it, {\rm Eis}}(m,\overline{\chi}^2)$, then the root number $\epsilon(f \otimes \chi)$ is equal to the parity of $f$. If $f \in \mathcal{H}_{\kappa}(m,\overline{\chi}^2)$, then $\epsilon(f \otimes \chi) = i^{-\kappa} \chi(-1)$.
These formulas follow from local computations at finite primes using \cite[Prop.\ 3.8(iii)]{JacquetLanglands} and the explicit formulas for root numbers at the archimedean place found just above \cite[Thm.\ 5.15]{JacquetLanglands}. See also \cite[\S1]{LiEpsFactorsAndnCloseness1}.

\subsection{Bruggeman-Kuznetsov}  
Let $B_{it_j}(q,\psi)$ denote an orthogonal basis for $S_{it_j}(q,\psi)$, and $B_{it,\text{Eis}}(q,\psi)$ denote an orthogonal basis for $\mathcal{E}_{it}(q,\psi)$ when $t \neq 0$.  Let $h(t)$ be a function holomorphic in the strip $|\text{Im}(t)| \leq \frac12 + \delta$, satisfying $h(t) = h(-t)$, and $|h(t)| \ll (1+|t|)^{-2-\delta}$ for some $\delta > 0$. Recall the twisted Kloosterman sum \begin{equation*}
 S_{\psi}(m,n;c) = \sumstar_{y \shortmod{c}} \overline{\psi}(y) e\Big(\frac{my + n \overline{y}}{c}\Big)
,
\end{equation*}
where the $*$ on the sum indicates that $(y,c) = 1$,
and let $c_t = \frac{4\pi}{\cosh(\pi t)}$. Then, for $mn > 0$ we have (see e.g. \cite[(10.2)]{YoungEisenstein}) 
\begin{multline*}
 \sum_{ t_j} h(t_j) c_{t_j} \sum_{f \in B_{it_j}(q, \psi)} \frac{\lambda_f(m) \overline{\lambda_f(n)}}{\langle f, f \rangle_q} 
 + 
 \frac{1}{4 \pi} \intR h(t) c_t
 \sum_{E \in B_{it, \text{Eis}}(q,\psi)} 
 \frac{\lambda_E(m) \overline{\lambda_E(n)}}{\langle E, E \rangle_{\text{Eis}}} \, dt 
 \\
 =
\delta_{m=n} g_0 + \sum_{c \equiv 0 \shortmod{q}} \frac{S_{\psi}(m,n;c)}{c} g^{+}\Big(\frac{4 \pi \sqrt{mn}}{c}\Big),
\end{multline*}
where
\begin{equation}
\label{eq:gplusdef}
g_0 = \frac{1}{\pi} \intR t \tanh(\pi t) h(t)\, dt,
\quad \text{ and } \quad
g^{+}(x) = 2i \intR \frac{J_{2it}(x)}{\cosh(\pi t)} t h(t)\, dt.
\end{equation}

It was shown by the first author \cite[\S7]{PetrowPetersson} that there exists certain positive weights $\rho_f(\ell) = \ell^{o(1)}$ such that if $(n_1 n_2,q) = 1$, then
\begin{equation} 
\label{eq:KuznetsovSieveFormulaCuspForms}
 \sum_{\ell m=q} \sum_{f \in \mathcal{H}_{it_j}(m, \psi)} \frac{\lambda_f(n_1) \overline{\lambda_f(n_2)}}{\langle f, f \rangle_q} \frac{1}{\rho_f(\ell)}
 =\sum_{f \in B_{it_j}(q, \psi)} \frac{\lambda_f(n_1) \overline{\lambda_f(n_2)} }{\langle f, f \rangle_q}.
 \end{equation}
 The weight $\rho_f(\ell)$ is a certain explicit function of the Hecke eigenvalues of $f$. It was shown by the second author \cite[\S 8.5 and Lem.\ 8.3]{YoungEisenstein} that an analogous formula holds for the Eisenstein series, namely
\begin{equation} 
\label{eq:KuznetsovSieveFormulaEisenstein}
 \sum_{\ell m=q} \sum_{E \in \mathcal{H}_{it, \text{Eis}}(m, \psi)} \frac{\lambda_E(n_1) \overline{\lambda_E(n_2)}}{\langle E, E \rangle_{\text{Eis}}} \frac{1}{\rho_E(\ell)}
 =\sum_{E \in B_{it, \text{Eis}}(q, \psi)} \frac{\lambda_E(n_1) \overline{\lambda_E(n_2)} }{\langle E, E \rangle_{\text{Eis}}},
 \end{equation}
where $\rho_E(\ell)$ is given by the same function of the Hecke eigenvalues of $E$ as $\rho_f(\ell)$.

 Let 
$$
w_{f,\ell} = c_{ t_j} \frac{1}{\langle f, f \rangle_q} \frac{1}{\rho_f(\ell)} \quad \text{ and } \quad w_{E,\ell} =  c_t \frac{1}{\langle E, E \rangle_{\text{Eis}}} \frac{1}{\rho_E(\ell)}$$
for $f \in \cH_{it_j}(m , \psi)$ and $E \in \cH_{it, \text{Eis}}(m , \psi)$. 
Note that $c_{t_j}>0$ for any $f \in \cH_{it_j}(m , \psi)$, including any exceptional cases where $t_j \in i \R$. More precisely, we have by \eqref{eq:PeterssonNormMaassCuspForm} (see also \cite[Sections 8.4, 8.5]{YoungEisenstein} for the Eisenstein case)
\begin{equation}
\label{eq:ctjweightformula}
w_{f,\ell} = q^{-1} (q(1+|t_j|))^{o(1)} \quad \text{ and }\quad w_{E,\ell} = \frac{q^{-1}  (q(1+|t|))^{o(1)}}{|L(1+2it, \chi_1 \chi_2)|^2}.
\end{equation}
Note that if $\chi_1 \chi_2$ is the trivial character, then this weight vanishes to order $2$ at $t=0$, which is the situation encountered in \cite{CI}. Indeed, there $q$ is square-free and $\chi$ is quadratic, hence the only solution to $\chi_1 \overline{\chi_2} \simeq \overline{\chi}^2$ with $q_1 q_2 | q$ is $q_1 = q_2 = 1$, $\chi_1 = \chi_2 = 1$.  
By the hypothesis in Theorem \ref{thm:mainthmMaassEisenstein} that $\chi$ is not quadratic, we have $\chi_1\chi_2$ is not trivial (see the discussion following \eqref{eq:LfunctionEisensteinSeriesTwistFormula}), and hence $w_{E,\ell} \gg q^{-1} (q(1+|t|))^{-\varepsilon}$ for all $t \in \mr$. This is the only place where the hypothesis that $\chi$ is not quadratic is used in this paper, which is for convenience of notation only.

In summary, we have established the following. 
\begin{myprop}\label{prop:BruggemanKuznetsovFormula}
Suppose $\chi$ is primitive of conductor $q$, and not quadratic.
There exist  positive weights $w_{f,\ell} \gg q^{-1} (q(1+|t_j|))^{-\varepsilon}$, and $w_{E, \ell} \gg q^{-1} (q(1+|t|))^{-\varepsilon}$ so that for any $(n_1 n_2,q)=1$ and $n_1 n_2 > 0$ we have 
\begin{multline}
\label{eq:Kuznetsov}
\sum_{ t_j} h(t_j) 
\sum_{\ell m=q} \sum_{f \in \mathcal{H}_{it_j}(m,\overline{\chi}^2)} w_{f,\ell} \lambda_{f}(n_1) \overline{\lambda_{f}(n_2) }
\\ 
 +
\frac{1}{4 \pi} \intR h(t)  \sum_{\ell m=q} \sum_{E \in \mathcal{H}_{it,\text{Eis}}(m, \overline{\chi}^2)} w_{E, \ell} \lambda_{E}(n_1) \overline{\lambda_{E}(n_2)}  dt
\\
=
\delta_{n_1=n_2} g_0 + \sum_{c \equiv 0 \shortmod{q}} \frac{S_{\overline{\chi}^2}(n_1,n_2;c)}{c} g^{+}\Big(\frac{4 \pi \sqrt{n_1 n_2}}{c}\Big).
\end{multline}
\end{myprop}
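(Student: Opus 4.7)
The plan is to combine the classical Bruggeman–Kuznetsov formula, stated in the displayed equation preceding \eqref{eq:gplusdef}, with the newform sieving identities \eqref{eq:KuznetsovSieveFormulaCuspForms} and \eqref{eq:KuznetsovSieveFormulaEisenstein}, and then to verify the asserted positivity and size of the combinatorial weights. First, I would apply the Bruggeman–Kuznetsov formula with character $\psi = \overline{\chi}^2$ at the integers $(m,n) = (n_1,n_2)$ coprime to $q$: this produces exactly the right-hand side of \eqref{eq:Kuznetsov}, while the left-hand side is a spectral sum over orthogonal bases $B_{it_j}(q,\overline{\chi}^2)$ and $B_{it,\text{Eis}}(q,\overline{\chi}^2)$, weighted by $c_{t_j}/\langle f,f\rangle_q$ and $c_t/\langle E,E\rangle_{\text{Eis}}$ respectively.

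Next, for each fixed $t_j$ (resp.\ $t$), I would substitute \eqref{eq:KuznetsovSieveFormulaCuspForms} (resp.\ \eqref{eq:KuznetsovSieveFormulaEisenstein}) to rewrite the sum over the orthogonal basis as a sum over the Atkin–Lehner–Li decomposition \eqref{eq:orthogonaldecompositionMaass} into newforms of level $m \mid q$, picking up the sieving weights $1/\rho_f(\ell)$ and $1/\rho_E(\ell)$. These two steps produce precisely the left-hand side of \eqref{eq:Kuznetsov} with the weights $w_{f,\ell}$ and $w_{E,\ell}$ as defined in the text; both are positive because $\rho_f(\ell),\rho_E(\ell) > 0$ and the Petersson/Eisenstein inner products are positive, while $c_{t_j} = 4\pi/\cosh(\pi t_j)$ is positive even at exceptional points $t_j \in i[-\tfrac12,\tfrac12]$ since then $\cosh(\pi t_j) = \cos(\pi |t_j|) > 0$.

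Finally, I would verify the claimed lower bounds on the weights. For cusp forms this is immediate from \eqref{eq:PeterssonNormMaassCuspForm} and $\rho_f(\ell) = \ell^{o(1)}$, giving $w_{f,\ell} \gg q^{-1}(q(1+|t_j|))^{-\varepsilon}$. For Eisenstein series, \eqref{eq:Echi1chi2InnerProduct} and \eqref{eq:ctjweightformula} reduce the question to a lower bound of the form $|L(1+2it,\chi_1\chi_2)| \gg (q(1+|t|))^{-\varepsilon}$ for every decomposition $\chi_1 \overline{\chi_2} \simeq \overline{\chi}^2$ with $q_1 q_2 \mid q$ that arises in $\mathcal{H}_{it,\text{Eis}}(m,\overline{\chi}^2)$. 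The one scenario in which this bound could fail is when $\chi_1\chi_2$ is the trivial character, since then $L(1+2it,\chi_1\chi_2)$ is (a shift of) $\zeta(s)$ and has a pole at $t=0$ whose inverse vanishes. As noted after \eqref{eq:LfunctionEisensteinSeriesTwistFormula}, the local analysis of solutions to $\chi_1\overline{\chi_2} \simeq \overline{\chi}^2$ forces $\{\chi_1,\chi_2\} = \{1,\overline{\chi}^2\}$ or $\{\chi^2,1\}$ locally, so $\chi_1\chi_2$ equals $\overline{\chi}^2$ or $\chi^2$ up to triviality, and is itself trivial exactly when $\chi$ is quadratic. Thus the hypothesis that $\chi$ is not quadratic excludes this degenerate case, and standard Siegel-free lower bounds $|L(1+2it,\chi_1\chi_2)| \gg (q(1+|t|))^{-\varepsilon}$ for non-principal Dirichlet $L$-functions on the line $\real(s)=1$ supply the required estimate.

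The main (and essentially only) obstacle is the last point: ensuring the non-vanishing/lower bound for $L(1+2it,\chi_1\chi_2)$ uniformly in $t$, which is where the assumption that $\chi$ is not quadratic enters in an essential way.
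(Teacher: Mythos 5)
Your proof plan follows the same route as the paper: Bruggeman--Kuznetsov over orthogonal bases with $\psi=\overline{\chi}^2$, then the newform sieving identities \eqref{eq:KuznetsovSieveFormulaCuspForms} and \eqref{eq:KuznetsovSieveFormulaEisenstein}, followed by the weight estimates via \eqref{eq:PeterssonNormMaassCuspForm} and \eqref{eq:Echi1chi2InnerProduct}. You also correctly locate the role of the hypothesis that $\chi$ is not quadratic: it ensures $\chi_1\chi_2$ is non-principal in every Eisenstein decomposition that arises.

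However, there is a genuine error in the last step. From \eqref{eq:Echi1chi2InnerProduct} and the definition $w_{E,\ell} = c_t\, \langle E,E\rangle_{\text{Eis}}^{-1}\,\rho_E(\ell)^{-1}$, one finds
$$w_{E,\ell} \asymp \frac{q^{-1+o(1)}(1+|t|)^{o(1)}}{|L(1+2it,\chi_1\chi_2)|^2}.$$
Thus a \emph{lower} bound on $w_{E,\ell}$ requires an \emph{upper} bound $|L(1+2it,\chi_1\chi_2)| \ll (q(1+|t|))^{\varepsilon}$, not the lower bound $|L(1+2it,\chi_1\chi_2)| \gg (q(1+|t|))^{-\varepsilon}$ you invoke. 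You have the inequality inverted. Your own description of the degenerate case actually reveals the right direction: when $\chi_1\chi_2$ is trivial, $L(1+2it,\chi_1\chi_2)$ has a pole at $t=0$, so $|L|^{-2}$ \emph{vanishes} there and $w_{E,\ell}$ drops to zero --- the obstruction is that $|L|$ is too large, not too small. The fix is simple and makes the step easier: for non-principal $\chi_1\chi_2$ one needs only the standard upper bound $|L(1+2it,\chi_1\chi_2)| \ll \log(q(2+|t|)) \ll (q(1+|t|))^{\varepsilon}$ on the edge of the critical strip, and no Siegel-type considerations enter at all. The appeal to ``Siegel-free lower bounds'' is therefore both the wrong tool and a misdiagnosis of where any difficulty would lie.
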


We also need the opposite-sign case of Proposition \ref{prop:BruggemanKuznetsovFormula}, i.e., when $n_1 n_2 < 0$.  The formula is identical to \eqref{eq:Kuznetsov} except that $g^{+}(x)$ is replaced by $g^{-}(x)$ defined by
\begin{equation}
\label{eq:gminusdef}
 g^{-}(x) =  \frac{4}{\pi} \int_0^{\infty} K_{2it}(x) \sinh(\pi t) t h(t)\, dt.
\end{equation}

\section{Conventions and terminology for weight functions}
\label{section:definitionsandconventions}

We begin with a useful definition from \cite{KiralPetrowYoung}.  
Let $\cF$ be an index set and $X=X_T: \cF \to \R_{\geq 1}$ be a function of $T \in \cF$. 
\begin{mydefi}\label{inert}
A family $\{w_T\}_{T\in \cF}$ of smooth  
functions supported on a product of dyadic intervals in $\R_{>0}^d$ is called \emph{$X$-inert} if for each ${\bf a} \in \Z_{\geq 0}^d$ we have $$C({\bf a}):= \sup_{T \in \cF} \sup_{{\bf t} \in \R_{>0}^d} X_T^{-{\bf a}.{\bf 1}}\left| {\bf t}^{\bf a} w_T^{({\bf a})}({\bf t}) \right| < \infty.$$
\end{mydefi}

It is also convenient for later purposes to slightly generalize the above notion of a family of $X$-inert functions.  
\begin{mydefi}
Suppose that $W_T(x, {\bf t})$ with $T \in \mathcal{F}$  is a family of smooth functions, where ${\bf t} \in \mr^d$.  
We say that $\{W_T\}_{T \in \mathcal{F}}$ forms an $X$-inert family with respect to ${\bf t}$ if $W$ has dyadic support in terms of ${\bf t}$ and if for each ${\bf a}$, $k$ and $x$ we have 
\begin{equation*}
C_k(x,{\bf a}) :=  \sup_{T \in \cF} \sup_{{\bf t} \in \R_{>0}^d}  X_T^{-{\bf a}.{\bf 1}} \Big| {\bf t}^{\bf a} \frac{\partial^{{\bf a}}}{\partial {\bf t}^{\bf a}} \frac{\partial^k}{\partial x^k} W_T(x, {\bf t}) \Big| < \infty.
\end{equation*}  
\end{mydefi}

As a convention, we may write $w(x,\cdot)$ as shorthand to represent $w(x,{\bf t})$.  We may then state that $w(x,\cdot)$ is $X$-inert with respect to ${\bf t}$, which allows us to concisely track the behavior of $w$ with respect to the suppressed variables.

\section{Setting up the moment problem}
For $T\geq 1$, let \begin{equation}
\label{eq:h0def}
 h_0(t) = \exp(-(t/T)^2) \frac{(t^2+\frac14)}{T^2}.
\end{equation}
Note $h_0(t) > 0$ for $t \in \mr$ as well as $-\frac12 < it < \frac12$.  Moreover, $h_0(t) \gg T^{-2}$ for $t \ll T$.

In this paper we are concerned with estimating the following moment of $L$-functions:
\begin{multline}\label{Mpmqchidef}
\mathcal{M} (q,\chi)
:= 
\sum_{t_j} h_0(t_j) \sum_{\ell m=q} \psum_{f \in \mathcal{H}_{it_j}(m,\overline{\chi}^2)} w_{f,\ell} L(1/2, f \otimes \chi)^3
\\
+
\frac{1}{4 \pi} \intR h_0(t)  \sum_{\ell m=q} \psum_{E \in \mathcal{H}_{it,\text{Eis}}(m, \overline{\chi}^2)} w_{E, L} L(1/2, E \otimes \chi)^3 dt,
\end{multline}
where the $+$ over the sums represents Maass forms or Eisenstein series with even parity.
\begin{mytheo}\label{helperthm1}
If $\chi$ has cube-free conductor and is not quadratic, then we have $$\mathcal{M} (q,\chi) \ll_\eps T^Bq^\eps.$$
\end{mytheo}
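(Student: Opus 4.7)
The strategy follows the framework of Conrey--Iwaniec \cite{CI}, adapted to this setting. There are four main steps.

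\emph{Step 1: Approximate functional equation.} For $f \in \mathcal{H}_{it_j}(m, \overline{\chi}^2)$, the twist $f\otimes\chi$ is a self-dual newform of conductor $q^2$, so $L(1/2, f\otimes\chi)\in\mathbb{R}$ and $L(1/2, f\otimes\chi)^3 = L(1/2, f\otimes\chi)\cdot |L(1/2, f\otimes\chi)|^2$. Apply the approximate functional equation separately to the two factors, of analytic conductor $q^2(1+|t_j|)^{O(1)}$ and $q^4(1+|t_j|)^{O(1)}$ respectively, to write
\begin{equation*}
L(1/2, f\otimes\chi)^3 = \sum_{N, M \geq 1} \frac{\lambda_{f\otimes\chi}(N)\,\overline{\lambda_{f\otimes\chi}(M)}}{\sqrt{NM}}\, W(N,M;t_j),
\end{equation*}
where $W(\cdot,\cdot;t)$ is a $T^{O(1)}$-inert family effectively supported on $NM \ll q^3 T^{O(1)}$. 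Since $\lambda_{f\otimes\chi}(n) = \lambda_f(n)\chi(n)$ for $(n,q)=1$ and the bulk of $N,M$ are coprime to $q$ (the complementary piece being smaller and handled analogously), one may assume this coprimality throughout.

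\emph{Step 2: Kuznetsov.} Substitute the AFE expansion into $\mathcal{M}(q,\chi)$ and exchange the order of summation so that the spectral sum $\sum_{t_j} h_0(t_j) \sum_{m|q}\sum_f w_{f,\ell}(\cdots)$ (and its Eisenstein analogue) sits innermost with arguments $\lambda_{f\otimes\chi}(N)\overline{\lambda_{f\otimes\chi}(M)}$. Apply Proposition \ref{prop:BruggemanKuznetsovFormula} with $(n_1,n_2)=(N,M)$ and test function built from $h_0(t)\cdot W(N,M;t)$; one needs both the same-sign and opposite-sign versions to handle the even-parity projector. The diagonal $\delta_{N=M}$ produces a main-term contribution controlled by the fourth moment of $L(1/2+it,\chi)$, which gives $O(T^B q^\varepsilon)$ by standard estimates. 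The off-diagonal becomes, after opening $S_{\overline{\chi}^2}(N,M;c)$ and writing $c=qc'$,
\begin{equation*}
\sum_{N,M}\frac{1}{\sqrt{NM}}\sum_{c'\geq 1}\frac{1}{qc'}\sum_{y\shortmod{qc'}}\overline{\chi}^2(y)\, e\!\left(\frac{Ny + M\overline{y}}{qc'}\right)\tilde{W}\!\left(N,M,\tfrac{\sqrt{NM}}{qc'}\right),
\end{equation*}
where $\tilde{W}$ is an $(qT)^{O(1)}$-inert family in its third argument.

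\emph{Step 3: Poisson summation.} Now apply Poisson summation to the $M$-sum modulo $qc'$ (and symmetrically for $N$). This dualizes $M$ into a frequency $M^*$ of length $\ll qc'T^{O(1)}/M_0$ (where $M_0$ is the dyadic scale), and produces a complete exponential sum modulo $qc'$ which, by Chinese Remainder Theorem with $(c',q)=1$, factors into a piece mod $c'$ (handled by reciprocity into a smooth integral) and a piece mod $q$. Using the character-orthogonality identity $\overline{\chi}^2(y)=\tfrac{1}{\varphi(q)}\sum_{\psi\shortmod q}\overline{\psi}(\cdots)\cdots$ to separate the $\chi^2$-twist, the mod-$q$ piece assembles (after elementary algebraic manipulation as in \cite{CI}) into precisely the character sum $g(\chi,\psi)$ defined in \eqref{eq:gdef}, summed against a Dirichlet polynomial in $N,M^*$ modulo $q$ controlled by the characters $\psi$.

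\emph{Step 4: Bounds.} The resulting dual expression has shape roughly
\begin{equation*}
\frac{T^{O(1)}}{q}\sum_{\psi\shortmod q}g(\chi,\psi)\cdot\mathcal{A}(\psi),
\end{equation*}
where $\mathcal{A}(\psi)$ encodes a twisted fourth-moment-type sum which, by the classical fourth moment bound $\sum_{\psi\shortmod q}|L(1/2,\psi)|^4\ll q^{1+\varepsilon}$ (with archimedean factors absorbed into $T^B$), satisfies $\sum_\psi |\mathcal{A}(\psi)|\ll q^{1+\varepsilon}T^{O(1)}$. Combined with the pointwise bound $|g(\chi,\psi)|\ll q^{1+\varepsilon}$ valid for every primitive $\psi$ when $q$ is cube-free, this yields $\mathcal{M}(q,\chi) \ll T^B q^\varepsilon$.

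\emph{Main obstacle.} The crucial input is the uniform estimate $|g(\chi,\psi)|\ll q^{1+\varepsilon}$. For $q$ prime this is a deep statement requiring Deligne's Riemann Hypothesis for varieties over finite fields, realized through an $\ell$-adic trace-function analysis of the sheaf defined by \eqref{eq:gdef}. For $q=p^2$ it requires a separate elementary but delicate stationary-phase-style analysis, and for cube-free $q$ the two combine by multiplicativity of $g$ via CRT. The restriction to cube-free $q$ is essential: the failure of this bound at $q=p^3$, where exceptional $\psi$ give $|g(\chi,\psi)|\gg qp^{1/2}$, would propagate as in \eqref{eq:L12bad} and defeat the Weyl exponent.
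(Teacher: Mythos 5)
Your proposal captures the overall Conrey--Iwaniec architecture that the paper actually uses: approximate functional equation, Bruggeman--Kuznetsov, Poisson summation, and finally a fourth-moment estimate over $\psi \bmod q$ weighted by $g(\chi,\psi)$, with the crucial input $|g(\chi,\psi)| \ll q^{1+\eps}$ (Deligne for $q=p$, elementary for $q=p^2$, multiplicativity for cube-free $q$). However, several steps are either wrong as stated or elide material that the paper must handle carefully, and each would need repair in a full argument.

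First, the AFE formula $L^3 = \sum_{N,M} \lambda(N)\overline{\lambda(M)}/\sqrt{NM}\cdot W(N,M;t)$ is not correct: the Dirichlet coefficient of $L(1/2,f\otimes\chi)^2$ at $M$ is a divisor-type sum, and Lemma~\ref{afe} makes this explicit by writing the second factor as a sum over $n_2, n_3$ with the Hecke relation producing an additional $d$-sum; as written your coefficient is missing the divisor structure. Second, and more seriously, the paper applies Poisson summation in three variables $n_1,n_2,n_3$ (not two), precisely because $n_2$ and $n_3$ must remain independent to produce, after dualization, the four-variable complete sum $G(m_1,m_2,m_3,c)$ which reduces to $H_\chi$ and thence to $g(\chi,\psi)$; applying Poisson only to $N$ and $M=n_2n_3$ would leave a non-smooth divisor weight on the dual side and does not yield the structure that assembles into $g$. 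Third, the reduction to characters $\psi\bmod q$ does not come from Fourier-analyzing $\overline{\chi}^2(y)$ (which would be trivial, since $\overline{\chi}^2$ is already a character); the paper performs multiplicative Fourier inversion of $H_\chi$ on $(\Z/q\Z)^\times$ in a specific auxiliary variable $w = m_1'm_2'm_3'\overline{r'}$ built from the prime-to-$q$ parts of the dual variables (see Section~\ref{section:DirichletDecomposition}), and only after this does the sum $g(\chi,\psi)$ emerge via Lemma~\ref{lemma:HhatEvaluationpsiPrimitive}. Finally, Step~4 silently assumes the only relevant $\psi$ are primitive of conductor $q$: the trivial character contributes $Z_0$, which has poles at $s_j=1$ and cannot be shifted to the half-line; imprimitive $\psi$ of intermediate conductor require the separate estimate of Lemma~\ref{lemma:HhatEvaluationq=pSquaredPsiConductorp}; and the degenerate terms $m_1m_2m_3=0$ give rise to $\mathcal{T}_0^\pm$, which is treated by a distinct trivial-bound argument in Section~\ref{section:Finale}. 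The high-level plan is therefore the right one, but these omissions are substantive and not merely notational.
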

Theorem \ref{helperthm1} implies Theorem \ref{thm:mainthmMaassEisenstein}. Indeed, let $\chi_1 = 1$ and $\chi_2$ be the primitive character underlying $\chi^2$. Then $E=E_{\chi_1,\chi_2,t}$ occurs in $\mathcal{H}_{it,\text{Eis}}(m, \overline{\chi}^2)$ for some $m|q$, and we have for this $E$ that 
\begin{equation*}
L(1/2, E \otimes \chi) = |L(1/2+it, \chi)|^2.
\end{equation*}
We have as well that $L(1/2, f \otimes \chi)\geq 0$ by \cite{Guo} (see also \eqref{eq:LfunctionEisensteinSeriesTwistFormula} for the nonnegativity in the Eisenstein case), so that Theorem \ref{thm:mainthmMaassEisenstein} follows from Theorem \ref{helperthm1} by \eqref{eq:ctjweightformula}.

\subsection{Approximate functional equation}
For $j=1,2$, let 
\begin{equation}
\label{eq:Vjdef}
 V_{j}(y,t) = \frac{1}{2 \pi i} \int_{(\sigma)} y^{-s} \frac{\Gamma_{\mr}(1/2+\delta+s+it)^j \Gamma_{\mr}(1/2+\delta+s-it)^j}{\Gamma_{\mr}(1/2+\delta+it)^j \Gamma_{\mr}(1/2+\delta-it)^j} \frac{G_j(s)}{s} ds,
\end{equation}
where $\Gamma_{\mr}(s) = \pi^{-s/2} \Gamma(s/2)$, $\delta \in \{0, 1\}$, and $\sigma$ is to the right of all poles of the integrand.  We take $G_1(s) = e^{2s^2}$ and $G_2(s) = e^{4s^2}$. Here $V_j(x, t)$ is a smooth function on $x>0$ with rapid decay for $x \gg 1+|t|^j$. See Section \ref{section:WeightFunctions1} for more precise estimates for $V_j$.  
\begin{mylemma}\label{afe}
Suppose $m \mid q$ and $f\in \mathcal{H}_{it_j}(m, \overline{\chi}^2)$ is even.  
We have \begin{equation*}
 L(1/2, f \otimes \chi)^3
 =   
\sum_{(d,q) = 1} \frac{4}{d} 
 \sum_{n_1, n_2, n_3 } \frac{\lambda_f(n_1)\overline{\lambda_f}(n_2 n_3)  \chi(n_1) \overline{\chi}(n_2 n_3)}{\sqrt{n_1 n_2 n_3}}  
 V_1\Big(\frac{n_1}{q}, t_j\Big)
 V_2\Big(\frac{n_2 n_3 d^2}{q^2}, t_j \Big),
\end{equation*}
and similarly for $L(1/2, E \otimes \chi)^3$ for $m \mid q$ and $E \in \mathcal{H}_{it, {\rm Eis}}(m, \overline{\chi}^2)$ even. The parity parameter $\delta$ implicit in the definition of $V_j$ is equal to the parity of $\chi$. 
\end{mylemma}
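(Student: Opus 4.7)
The plan is to apply the approximate functional equation separately to $L(1/2, f \otimes \chi)$ and to $L(1/2, f \otimes \chi)^2$, then multiply and invoke Hecke multiplicativity to merge the two sums. Since $f$ is even, $f \otimes \chi$ is a self-dual newform of level $q^2$ with trivial central character and root number $+1$; its archimedean factor is $\gamma(s) = \Gamma_{\mr}(s+\delta+it_j)\Gamma_{\mr}(s+\delta-it_j)$, where $\delta \in \{0,1\}$ records the parity of $\chi$. Setting $\Lambda_1(s) = q^s \gamma(s) L(s, f \otimes \chi)$, the standard Mellin-Barnes argument --- multiply by $G_1(s)/s$, integrate over $\real(s) = 2$, shift the contour to $\real(s) = -2$ picking up the residue $\Lambda_1(1/2)$ at $s=0$, then use the functional equation together with the evenness of $G_1$ and the substitution $s \mapsto -s$ --- yields
$$L(1/2, f \otimes \chi) = 2 \sum_{n_1} \frac{\lambda_f(n_1)\chi(n_1)}{\sqrt{n_1}}\, V_1(n_1/q, t_j).$$

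Applying the same procedure to the entire, self-dual function $L(s, f \otimes \chi)^2$, which has archimedean factor $\gamma(s)^2$, conductor $q^4$, and root number $+1$ (by squaring), gives
$$L(1/2, f \otimes \chi)^2 = 2 \sum_N \frac{c(N)}{\sqrt{N}}\, V_2(N/q^2, t_j), \qquad c(N) = \sum_{n_2 n_3 = N} \lambda_f(n_2)\chi(n_2)\,\lambda_f(n_3)\chi(n_3).$$
Only $(N, q) = 1$ contributes because of the $\chi$ factors, so the Hecke multiplication rule for $f$ (with nebentypus $\overline{\chi}^2$) applies freely: $\lambda_f(n_2)\lambda_f(n_3) = \sum_{d \mid (n_2, n_3)} \overline{\chi}^2(d)\,\lambda_f(n_2 n_3/d^2)$. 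Reindexing $n_2 = d n_2'$, $n_3 = d n_3'$ and noting $\chi(n_2 n_3) = \chi^2(d)\chi(n_2' n_3')$, the factor $\chi^2(d)\overline{\chi}^2(d)$ equals $1$ precisely when $(d, q) = 1$ and vanishes otherwise, producing
$$L(1/2, f \otimes \chi)^2 = 2 \sum_{(d,q)=1} \frac{1}{d} \sum_{n_2, n_3} \frac{\lambda_f(n_2 n_3)\chi(n_2 n_3)}{\sqrt{n_2 n_3}}\, V_2\Big(\frac{n_2 n_3 d^2}{q^2}, t_j\Big).$$

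Multiplying the two expressions contributes the prefactor $4$; self-duality $\lambda_f(n)\chi(n) = \overline{\lambda_f(n)}\,\overline{\chi}(n)$ for $(n, q) = 1$ (a consequence of $f \otimes \chi$ having trivial nebentypus) then recasts the inner Hecke eigenvalue in the conjugated form, yielding the claimed identity. For an even Eisenstein newform $E$ of level dividing $q$ with central character $\overline{\chi}^2$, the same derivation is valid verbatim: by \eqref{eq:LfunctionEisensteinSeriesTwistFormula} the twisted $L$-function is entire, self-dual, and factors as a product of Dirichlet $L$-functions whose root numbers multiply to $+1$, and $\lambda_E(n)$ obeys the same Hecke multiplication rule. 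The main bookkeeping obstacle is to confirm that the gamma ratios, the powers of $q$, and the parity $\delta$ line up so that the shifted weight functions coincide precisely with $V_1$ and $V_2$ from \eqref{eq:Vjdef}; once this normalization is in hand, the algebraic manipulations above are routine.
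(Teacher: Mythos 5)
Your proposal is correct and follows the same route as the paper: apply the standard approximate functional equation separately to $L(1/2, f\otimes\chi)$ (degree $2$, conductor $q^2$, weight $V_1$) and to $L(1/2,f\otimes\chi)^2$ (degree $4$, conductor $q^4$, weight $V_2$), convert the coefficients of the square via the Hecke relation $\lambda_f(m)\lambda_f(n)=\sum_{d\mid(m,n)}\overline{\chi}^2(d)\lambda_f(mn/d^2)$ into the $d$-sum (the $(d,q)=1$ restriction coming from $\chi^2(d)\overline{\chi}^2(d)$), multiply the two expansions, and use self-duality $\lambda_f(n)\chi(n)\in\mr$ to rewrite one factor with conjugates. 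The paper is simply more terse, citing \cite[Thm.~5.3]{IK} for the contour-shift derivation of the AFE that you spell out; there is no substantive difference in method.
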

\begin{proof}
Since $f$ is even, the root number $\epsilon(f \otimes \chi)$ is $+ 1$.  For $f$ a Maass newform of spectral parameter $t_j$, a standard approximate functional equation \cite[Theorem 5.3]{IK}  gives 
\begin{equation}
\label{eq:AFE}
 L(1/2, f \otimes \chi)
 = 2
 \sum_{n_1} \frac{\lambda_f(n_1) \chi(n_1)}{\sqrt{n_1}}  V_1\Big(\frac{n_1}{q}, t_j \Big),
\end{equation}
where $\delta = 0$ if $\chi$ is even and $\delta = 1$ if $\chi$ is odd.
Similarly we have
\begin{equation}
\label{eq:AFEsquared}
 L(1/2, f \otimes \chi)^2 = 
 2 \sum_{(d,q) = 1} \frac{1}{d}
 \sum_{n_2, n_3} \frac{\overline{\lambda_f}(n_2 n_3) \overline{\chi}(n_2 n_3)}{\sqrt{n_2 n_3}} V_2\Big(\frac{n_2 n_3 d^2}{q^2}, t_j \Big),
\end{equation}
where the 
conjugates appear for convenience since $\lambda_f(n) \chi(n) \in \mr$, and the
sum over $d$ arises from
the Hecke relation \eqref{eq:HeckeRelation}.

The product of \eqref{eq:AFE} and \eqref{eq:AFEsquared} gives the formula in the statement of the lemma. 
\end{proof}

\subsection{Bruggeman-Kuznetsov}
\label{section:BruggemanKuznetsovApplication}
 Let $N_1,N_2,N_3,C \gg 1$, and let $w_0 (\cdot) =w_0(n_1,n_2,n_3,c)$ be a family of $1$-inert functions (depending on $q,T,N_j,C$) with dyadic support on $n_j \asymp N_j$ and $c \asymp C$. 
 Let $J^\pm_0 = J^\pm_0(x, n_1,n_2,n_3,c)$ be defined by 
\begin{equation}
\label{eq:Jpm0def}
J^+_0(x, n_1,n_2,n_3,c) =  w_0(\cdot) \intR \frac{J_{2it}(x)}{\cosh( \pi t)} t h\Big(t, \frac{n_1}{q}, \frac{n_2 n_3 d^2}{q^2}\Big) \, dt,
\end{equation}
with $J^-_0$ defined similarly with $\frac{J_{2it}(x)}{\cosh(\pi t)}$ replaced by $K_{2it}(x) \sinh(\pi t)$, where in both cases
\begin{equation}
\label{eq:hdef}
 h(t, y_1,y_2)   =  \exp(-(t/T)^2) \frac{(t^2+\frac14)}{T^2} V_{1}(y_1, t) V_{2}(y_2,t).
\end{equation}
Let $\mathcal{S}_{N_1, N_2, N_3, C}^{\pm}$ be defined by 
\begin{equation*}
 \frac{1}{C \sqrt{N_1 N_2 N_3}} \sum_{c \equiv 0 \shortmod{q}}  
 \sum_{n_1, n_2, n_3 }  \chi(n_1) \overline{\chi}(n_2 n_3)
 S_{\overline{\chi}^2}(n_1,n_2 n_3;c)
  J^{\pm}_0\Big(\frac{4 \pi \sqrt{n_1 n_2 n_3}}{c}, \cdot \Big).
\end{equation*}
\begin{myprop}\label{helperprop1}
Suppose that there exists $B>2$ such that $\mathcal{S}_{N_1, N_2, N_3, C}^{\pm} \ll_{\eps} T^B q^\eps$  for all $N_1,N_2,N_3,C$ satisfying \begin{equation}
\label{eq:NvariableSizes}
N_1 \ll_{\eps} (qT)^{1+\varepsilon} , \qquad N_2 N_3 \ll_{\eps} d^{-2} (qT)^{2+\varepsilon}, \qquad q \ll C \ll (qT)^{100}.
\end{equation}
Then Theorem \ref{helperthm1} holds.
\end{myprop}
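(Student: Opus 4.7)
The plan is to reduce $\mathcal{M}(q,\chi)$ to dyadic pieces of shape $\mathcal{S}^\pm_{N_1,N_2,N_3,C}$ via the approximate functional equation followed by Bruggeman--Kuznetsov, so that the hypothesis applies term-by-term.

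First I would insert Lemma \ref{afe} into both the cuspidal and Eisenstein contributions to $\mathcal{M}(q,\chi)$, and swap the order of summation to put the spectral sum innermost. By definition \eqref{eq:hdef}, the combined weight factor $h_0(t_j) V_1(n_1/q, t_j) V_2(n_2 n_3 d^2/q^2, t_j)$ is exactly $h(t_j, n_1/q, n_2 n_3 d^2/q^2)$. After a smooth dyadic partition of unity in $n_1, n_2, n_3$, the rapid decay of $V_j(y,t)$ for $y \gg (1+|t|)^j$ together with the Gaussian cutoff $\exp(-(t/T)^2)$ in $h$ truncates the sum to $N_1 \ll (qT)^{1+\eps}$ and $N_2 N_3 d^2 \ll (qT)^{2+\eps}$, matching the ranges in \eqref{eq:NvariableSizes}; tails contribute $O_A(T^{-A})$.

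Next I would apply Proposition \ref{prop:BruggemanKuznetsovFormula} to the spectral average. To accommodate the restriction to even forms indicated by $\psum$, I would use the identity $\lambda_f(-n) = \epsilon_f \lambda_f(n)$ (and its Eisenstein analog) to write
\begin{equation*}
2 \psum_f h(t_f) \lambda_f(n_1) \overline{\lambda_f(n_2 n_3)} = \sum_f h(t_f) \lambda_f(n_1) \bigl[\overline{\lambda_f(n_2 n_3)} + \overline{\lambda_f(-n_2 n_3)}\bigr],
\end{equation*}
and then invoke Bruggeman--Kuznetsov on the first summand in its $mn>0$ form (yielding $g^+$) and on the second in its opposite-sign form (yielding $g^-$ from \eqref{eq:gminusdef}). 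Since $\overline{\chi}^2(-1) = 1$, the Kloosterman sum $S_{\overline{\chi}^2}(n_1,-n_2 n_3;c)$ on the $g^-$ side matches $S_{\overline{\chi}^2}(n_1,n_2 n_3;c)$ up to a conjugation that can be absorbed into the weight, aligning with the definition of $\mathcal{S}^\pm$. A dyadic partition of the $c$-sum at scales $q \leq C \leq (qT)^{100}$ then expresses $\mathcal{M}(q,\chi)$, up to the Kuznetsov diagonal and negligible tails, as
\begin{equation*}
\sum_{(d,q)=1} \frac{4}{d} \sum_{N_1,N_2,N_3,C \text{ dyadic}} \bigl(\mathcal{S}^+_{N_1,N_2,N_3,C} + \mathcal{S}^-_{N_1,N_2,N_3,C}\bigr).
\end{equation*}
The upper cutoff on $C$ is justified by the expansions $J_{2it}(x),\, K_{2it}(x) \ll_{t} x^{\eps}$ as $x \to 0$ with $t$ in the support of $h$, which render tails with $c \gg (qT)^{100}$ negligible. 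All smooth factors (dyadic cutoffs and the $V_j$'s) combine into a single weight $w_0$ which one verifies is $1$-inert in the sense of Definition \ref{inert}, because $y^k V_j^{(k)}(y,t) = O_k(1)$ uniformly for $y$ in a dyadic range and $t \ll T$. The hypothesis now bounds each dyadic piece by $T^B q^\eps$, and summing over $O((\log qT)^{O(1)})$ dyadic scales and $d \ll (qT)^{1+\eps}$ produces the claimed bound after adjusting $\eps$.

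The Kuznetsov diagonal term $\delta_{n_1 = n_2 n_3} g_0$ is easily absorbed using the crude bound $g_0 \ll T^{O(1)}$ from \eqref{eq:gplusdef} and a harmonic-sum estimate over $n_1 = n_2 n_3 \ll (qT)^{1+\eps}$. The main technical obstacle is the bookkeeping required to confirm the $1$-inert property of $w_0$ after composing AFE Mellin integrals, dyadic cutoffs, and the Hecke-relation $d$-sum, and to handle the Eisenstein contribution in parallel: here the weight $w_{E,\ell}$ carries the factor $|L(1+2it,\chi_1\chi_2)|^{-2}$, which is tamed by the non-quadratic hypothesis on $\chi$ via the lower bound $w_{E,\ell} \gg q^{-1}(q(1+|t|))^{-\eps}$ noted after \eqref{eq:ctjweightformula}.
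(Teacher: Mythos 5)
Your proposal follows essentially the same route as the paper: insert the approximate functional equation, detect even parity via $\tfrac12(1+\lambda_f(-1))$ (your $\lambda_f(-n)=\epsilon_f\lambda_f(n)$ identity is an equivalent packaging), apply Bruggeman--Kuznetsov in both sign cases to produce $g^{\pm}$, perform dyadic decomposition in $n_1,n_2,n_3,c$, and invoke the hypothesis on each dyadic piece.

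One technical point deserves correction. To justify truncating the $c$-sum at $C\ll(qT)^{100}$ (and indeed to justify the dyadic decomposition in $c$ at all), you need absolute convergence of the infinite $c$-sum and a small tail, which requires $g^{\pm}(x)$ to decay as $x\to 0$. Your justification via ``$J_{2it}(x), K_{2it}(x)\ll_t x^{\eps}$'' is not valid for real $t$: for $t\in\mr$ fixed, $J_{2it}(x)$ and $K_{2it}(x)$ are $O_t(1)$ as $x\to 0$ but do not vanish. The actual mechanism is that $g^{\pm}(x)$ is an integral over $t$ against $t\,h(t)$, and shifting the $t$-contour to $\imag(t)=\mp\tfrac12+\eps$ produces the crude bound $g^{\pm}(x)\ll x^{1-\eps}T^{1+\eps}$, which the paper states and establishes in Section \ref{section:WeightFunctions1}. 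Combined with the Weil bound for $S_{\overline{\chi}^2}$, this gives the needed absolute convergence and a negligible tail for $c\gg(qT)^{100}$. This is the one place where your argument as written has a hole; the rest matches the paper's proof.
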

\begin{proof}
Recall the even parity condition on the sums over newforms in \eqref{Mpmqchidef}. This condition can be detected by extending the sums to all newforms and inserting the indicator function $\frac{1}{2} (1 + \lambda_f(-1))$ for Maass forms and Eisenstein series. By \eqref{eq:Kuznetsov}, we have
\begin{equation*}
 \mathcal{M}(q,\chi) = \mathcal{D} + \tfrac12 \mathcal{S}^{+} + \tfrac12 \mathcal{S}^{-},
\end{equation*}
where $\mathcal{D}$ is the diagonal term, and 
\begin{equation}\label{eq:cSpm}
 \mathcal{S}^{\pm} = 
 \sum_{(d,q) = 1} \frac{4}{d} 
 \sum_{n_1, n_2, n_3} \frac{\chi(n_1) \overline{\chi}(n_2 n_3)}{\sqrt{n_1 n_2 n_3}} 
 \sum_{c \equiv 0 \shortmod{q}} \frac{S_{\overline{\chi}^2}(\pm n_1, n_2 n_3 ; c)}{c} g^{\pm}\Big(\frac{4 \pi \sqrt{n_1 n_2 n_3}}{c}\Big).
\end{equation}
Here $g^{\pm}(x)$ is defined by \eqref{eq:gplusdef} and \eqref{eq:gminusdef} with respect to $h(t,\frac{n_1}{q}, \frac{n_2n_3d^2}{q^2})$ defined in \eqref{eq:hdef}.

The function $h$ is a valid test function for the hypotheses in the Bruggeman-Kuznetsov formula, and one may derive a crude bound of the form  $g^{\pm}(x) \ll x^{1-\varepsilon} T^{1+\varepsilon}$, as we will show in Section \ref{section:WeightFunctions1}. Hence by the Weil bound (see e.g.~\cite[Thm.\ 9.2]{KLkuznetsov}, which gives $|S_{\psi}(a,b ; c)| \leq d(c) (a,b,c)^{1/2} c^{1/2} q^{1/2}$, where $\psi$ has conductor $q|c$ and $d(\cdot)$ is the divisor function), we have that the sum over $c$ in \eqref{eq:cSpm} converges absolutely. We further develop the analytic properties of $g^{\pm}(x)$ in Section \ref{section:WeightFunctions1}. 

It is easy to see that $\mathcal{D} \ll_{\eps} T^{2+\varepsilon} q^{\varepsilon}$, and so the proof of Proposition \ref{helperprop1} reduces to showing that $\mathcal{S}^{\pm} \ll_{\eps} T^{B} q^{\varepsilon}$.

Next we apply a dyadic partition of unity to each of $n_1, n_2, n_3, c$.  Consider the component $w_0(\cdot)$ of this partition of unity which localizes the variables by $n_j \asymp N_j$, $c \asymp C$.  We may assume the inequalities \eqref{eq:NvariableSizes} hold, since if they do not, then the contribution from that piece of the partition of unity is small by trivial bounds. Hence, 
\begin{equation}
\label{eq:Sdef}
 \mathcal{S}^{\pm} = \sum_{(d,q)=1} \frac{4}{d} \sum_{N_1, N_2, N_3, C} \mathcal{S}_{N_1, N_2, N_3, C}^{\pm} + O_{\eps}((qT)^{\varepsilon}),
\end{equation}
where $N_1,N_2,N_3,C$ run over dyadic number satisfying the bounds \eqref{eq:NvariableSizes}. From the hypothesis on $\mathcal{S}_{N_1, N_2, N_3, C}^{\pm}$ in the statement of the proposition, we conclude the proof.
\end{proof}

\subsection{Poisson summation}
Let $m_1,m_2,m_3 \in \Z$ and $c>0$. Let $G = G(m_1,m_2,m_3 ; c)$ be the character sum defined by 
\begin{equation*}
 G = c^{-3} 
 \sumstar_{y \shortmod{c}}
 \sum_{x_1, x_2, x_3 \shortmod{c}}  \chi(x_1) \overline{\chi}(x_2 x_3) \chi^2(y)
 e_c(m_1 x_1 + m_2 x_2 + m_3 x_3 + x_1 y + x_2 x_3 \overline{y}),
\end{equation*}
where $e_c(x) = e(x/c)$. 
Let $M_1,M_2,M_3>0$ and let $w (\cdot) =w(n_1,n_2,n_3,c, m_1,m_2,m_3)$ be a family of $1$-inert functions (depending on $q,T,N_j,C, M_j$) with dyadic support on $n_j \asymp N_j$, $c \asymp C$, and $m_j \asymp M_j$. 
 Let $J^\pm(x,\cdot) = J^\pm(x, n_1, n_2,n_3, m_1,m_2,m_3, c)$ be defined by
 \begin{equation}\label{eq:Jpmdef}
J^{+}(x, n_1, n_2,n_3, m_1,m_2,m_3, c) =  w(\cdot) \intR \frac{J_{2it}(x)}{\cosh( \pi t)} t h\Big(t, \frac{n_1}{q}, \frac{n_2 n_3 d^2}{q^2}\Big) \, dt,
\end{equation}
and $J^-$ defined similarly with $K_{2it}(x) \sinh(\pi t)$ in place of $\frac{J_{2it}(x)}{\cosh(\pi t)}$.
Note that $J^\pm$ is identical to $J_0^\pm$ except that $w_0(\cdot)$ is replaced by $w(\cdot)$, which depends on the additional variables $m_1, m_2, m_3$). 

Let 
\begin{equation}
\label{eq:Kdef1}
 K^{\pm}_0 = \int_{\mathbb{R}^3}  J^{\pm}_0\Big(\frac{4 \pi \sqrt{t_1 t_2 t_3}}{c}, t_1, t_2, t_3, c\Big)
 e_c(-m_1 t_1 - m_2 t_2 - m_3 t_3) dt_1 dt_2 dt_3,
\end{equation}
and 
\begin{equation}
\label{eq:Kdef}
 K^{\pm} = \int_{\mathbb{R}^3}  J^{\pm}\Big(\frac{4 \pi \sqrt{t_1 t_2 t_3}}{c}, t_1, t_2, t_3, \cdot \Big)
 e_c(-m_1 t_1 - m_2 t_2 - m_3 t_3) dt_1 dt_2 dt_3.
\end{equation}
Finally, let $\epsilon_1,\epsilon_2,\epsilon_3 \in \{\pm 1\}$,
\begin{equation}\label{eq:Tpmdef}
\mathcal{T}^\pm = \mathcal{T}^\pm_{\epsilon_1,\epsilon_2, \epsilon_3} = \frac{1}{C \sqrt{N_1N_2N_3}} \sum_{c \equiv 0 \shortmod q} \sum_{m_j \epsilon_j \geq 1} G(m_1,m_2,m_3;c)K^\pm(m_1,m_2,m_3,c), 
\end{equation}
and
\begin{equation}\label{eq:T0pmdef}
\mathcal{T}^\pm_0 = \frac{1}{C \sqrt{N_1N_2N_3}} \sum_{c \equiv 0 \shortmod q} \sum_{m_1m_2m_3=0} G(m_1,m_2,m_3;c)K^\pm_0(m_1,m_2,m_3,c).
\end{equation}
\begin{myprop}\label{helperprop2}
Suppose that $\mathcal{T}^\pm, \mathcal{T}_0^\pm  \ll_\eps T^B q^\eps$ for some $B>2$ and for all $N_1,N_2,N_3,C$ satisfying \eqref{eq:NvariableSizes} and all $M_1,M_2,M_3$ satisfying $M_j \ll (qT)^A$ for some large but fixed $A$. Then $\mathcal{S}_{N_1, N_2, N_3, C}^{\pm}  \ll_\eps T^B q^\eps$ for all such $N_1,N_2,N_3,C$.
\end{myprop}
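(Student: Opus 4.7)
My plan is to perform Poisson summation modulo $c$ in each of the variables $n_1, n_2, n_3$ appearing in $\mathcal{S}^{\pm}_{N_1,N_2,N_3,C}$, identify the resulting character sum as $G(m_1,m_2,m_3,c)$ and the oscillatory integral as $K_0^{\pm}$, and then split the dual sum into a degenerate part (some $m_j=0$, giving exactly $\mathcal{T}_0^{\pm}$) and a non-degenerate part (all $m_j \neq 0$) which is treated dyadically to produce the family $\mathcal{T}^{\pm}$.

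First I would open the Kloosterman sum as $S_{\overline{\chi}^2}(n_1, n_2 n_3; c) = \sumstar_{y \shortmod{c}} \chi^2(y) e_c(n_1 y + n_2 n_3 \overline{y})$, which is linear in each of $n_1, n_2, n_3$ separately (with coefficients depending on $y$ and on the other $n_j$'s). Since $q \mid c$, each character $\chi(n_j)$ is periodic modulo $c$. Writing $n_j = x_j + c\ell_j$ and applying Poisson summation in each $\ell_j$ turns the $n_j$-sum into $c^{-1}\sum_{m_j} e_c(-m_j x_j) \int J_0^{\pm}(\cdots,t_j,\cdots)e_c(-m_j t_j)\,dt_j$. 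After all three Poissons, the four sums over $x_1, x_2, x_3, y \shortmod{c}$ (with $y$ coprime to $c$) assemble to exactly $c^3 G(m_1,m_2,m_3,c)$, which cancels the three factors $c^{-1}$ from Poisson together with the $c^{-3}$ appearing in the definition of $G$; the three $t_j$-integrals assemble to exactly $K_0^{\pm}(m_1,m_2,m_3,c)$ from \eqref{eq:Kdef1}. This yields
\[
\mathcal{S}^{\pm}_{N_1, N_2, N_3, C} = \frac{1}{C \sqrt{N_1 N_2 N_3}} \sum_{c \equiv 0 \shortmod{q}} \sum_{m_1, m_2, m_3 \in \Z} G(m_1, m_2, m_3, c)\, K_0^{\pm}(m_1, m_2, m_3, c).
\]

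Next I would split the sum according to whether $m_1 m_2 m_3 = 0$ or all $m_j \neq 0$. The $m_1 m_2 m_3 = 0$ piece is precisely $\mathcal{T}_0^{\pm}$ and is bounded by hypothesis. For the non-degenerate piece, fix signs $\epsilon_j = \sgn(m_j)$ and insert a smooth dyadic partition of unity $\sum_{M_j \text{ dyadic}} W(|m_j|/M_j)$ in each $m_j$; the product of $w_0(n_1, n_2, n_3, c)$ with these three dyadic cutoffs is again $1$-inert in the combined tuple $(n_1, n_2, n_3, c, m_1, m_2, m_3)$, so it plays the role of the weight $w(\cdot)$ in \eqref{eq:Jpmdef}, and the corresponding triple integral becomes exactly $K^{\pm}$. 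The resulting dyadic piece is then precisely $\mathcal{T}^{\pm}_{\epsilon_1, \epsilon_2, \epsilon_3}$ at scale $(M_1, M_2, M_3)$, bounded by hypothesis by $T^B q^{\varepsilon}$ provided $M_j \ll (qT)^A$. To reduce to this range one truncates: repeated integration by parts in the $t_j$-integral defining $K^{\pm}$ against $e_c(-m_j t_j)$ gains a factor $c/|m_j| \ll (qT)^{O(1)}/M_j$ per application, at the cost of differentiating the $1$-inert weight $w$ and the Bessel/Bessel-$K$ kernel inside $J^{\pm}$; combined with the crude bound $g^{\pm}(x) \ll x^{1-\varepsilon} T^{1+\varepsilon}$ from Section \ref{section:BruggemanKuznetsovApplication}, iterating this renders the tail $M_j \gg (qT)^A$ negligible for $A$ sufficiently large. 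Summing the $O((\log qT)^3)$ surviving dyadic pieces and absorbing logs into $q^{\varepsilon}$ gives the claim.

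The main obstacle is the truncation step: one must verify that derivatives in $t_j$ of $J_0^{\pm}(4\pi\sqrt{t_1 t_2 t_3}/c, t_1, t_2, t_3, c)$ are of manageable size uniformly across the dyadic regimes in \eqref{eq:NvariableSizes}, in order to justify the integration-by-parts argument. This requires tracking, via the chain rule on the argument $4\pi\sqrt{t_1 t_2 t_3}/c$, the smooth dependence of the Bessel transforms $J_{2it}$ and $K_{2it}$ on $t_j$ alongside that of the $1$-inert weight. Everything else is a formal application of Poisson summation and dyadic partitions of unity.
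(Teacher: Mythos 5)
Your proposal follows exactly the paper's proof: Poisson summation in each $n_j$ modulo $c$ to produce $G$ and $K_0^{\pm}$, splitting off the $m_1m_2m_3=0$ terms as $\mathcal{T}_0^{\pm}$, dyadically decomposing the nondegenerate terms by sign octant and scale to produce $\mathcal{T}^{\pm}_{\epsilon_1,\epsilon_2,\epsilon_3}$, and truncating the tail $M_j \gg (qT)^A$ via repeated integration by parts in the $t_j$-integrals. The only differences are cosmetic (you omit the harmless unimodular factors $\chi(\pm 1)$ and the sign $\pm m_1$ that Poisson in the $n_1n_2n_3<0$ case produces, and you truncate after rather than before the dyadic decomposition), and your flagged ``main obstacle'' is precisely what the paper dispatches with the crude bound \eqref{eq:K0pmTrivbound}.
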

Sections \ref{sec:Gcalc}-\ref{section:Finale} are dedicated to the proof of the bounds $\mathcal{T}^\pm, \mathcal{T}_0^\pm  \ll_\eps T^B q^\eps$, which by Propositions \ref{helperprop2} and \ref{helperprop1} will finish the proof of Theorem \ref{helperthm1}, and hence of Theorem \ref{thm:mainthmMaassEisenstein}.
\begin{proof}
Applying Poisson summation in each of the variables $n_1, n_2, n_3$ modulo $c$  gives
\begin{equation}
\label{eq:SformulaAfterPoisson}
\mathcal{S}_{N_1,N_2,N_3,C}^{\pm} = \frac{1}{C \sqrt{N_1 N_2 N_3}}  \sum_{c \equiv 0 \shortmod{q}} \sum_{m_1, m_2, m_3 \in \mz}
 \chi(\pm 1)G(\pm m_1, m_2, m_3;c) K^{\pm}_0(m_1, m_2, m_3, c).
 \end{equation}
 By integrating $K_0^\pm$ by parts three times in each variable, we have by \eqref{eq:NvariableSizes} a crude bound of the form \begin{equation}\label{eq:K0pmTrivbound}K_0^\pm(m_1,m_2,m_3,c) \ll (qT)^{A} \prod_{j=1}^3(1+|m_j|)^{-3},\end{equation} for some possibly large but fixed $A$. Therefore the sum \eqref{eq:SformulaAfterPoisson} converges absolutely, and we may in fact truncate each $m_j$ variable at $|m_j| \ll (qT)^{A'}$ for some large $A'$ depending polynomially on $1/\eps$ at the cost of a small error term.
 
Next, we separate the terms with $m_1m_2m_3=0$ in $\mathcal{S}_{N_1,N_2,N_3,C}^{\pm}$ from those in which none of the $m_j$ vanish. The terms with $m_1m_2m_3=0$ form the sum $\mathcal{T}_0^\pm$ defined in \eqref{eq:T0pmdef}. Leaving these terms aside, we split the remaining terms for which $m_j \neq 0$ for all $j$ into eight separate sums according to the octants of $\Z^3-\{m_1m_2m_3=0\}$.  Let us parametrize these eight sums by $(\epsilon_1,\epsilon_2,\epsilon_3) \in \{\pm1\}^3$. The octant corresponding to $\epsilon_1,\epsilon_2, \epsilon_3$ is then described by the inequalities $m_j \epsilon_j \geq 1$ for $j=1,2,3$. Given one choice of signs $\epsilon_j \in \{\pm 1\}$, we insert a dyadic partition of unity to the $m_1,m_2, m_3$ sums, which localizes each $|m_j|\asymp M_j \ll (qT)^{A'}$. The result of all of these decompositions is that 
\begin{equation*}
\mathcal{S}_{N_1,N_2,N_3,C}^{\pm} =  \mathcal{T}^\pm_0 + \sum_{M_1,M_2,M_3} \sum_{\epsilon_1,\epsilon_2,\epsilon_3 \in \{\pm 1\}} \mathcal{T}_{\epsilon_1, \epsilon_2, \epsilon_3}^\pm + O_\eps((qT)^\eps).
\end{equation*}
The proposition now follows from the hypothesized bounds on $\mathcal{T}_0^\pm$ and $\mathcal{T}^\pm$. 
\end{proof}

The main focus in this paper is on the character sum $G$, which is a generalization of the character sum found in the previous works \cite{CI} \cite{YoungCubic} \cite{Petrow} \cite{PetrowYoung}, since $\chi$ is no longer assumed to be quadratic and $q$ is not necessarily square-free. On the other hand, $K^\pm$ is very similar in shape to the oscillatory integrals found in the above references, so 
in Section \ref{section:WeightFunctions2}
we largely quote the existing literature.

\section{The calculation of $G$}\label{sec:Gcalc}
Based on the structural approach presented in \cite{PetrowYoung}, our primary goal on the arithmetical aspects of $G$ is to understand the analytic properties of the Dirichlet series
\begin{equation}\label{Zdef}
Z(s_1, s_2, s_3, s_4) := \sum_{\epsilon_1m_1, \epsilon_2 m_2, \epsilon_3m_3 \geq 1} \sum_{c \equiv 0 \shortmod{q}} \frac{cq G(m_1, m_2, m_3;c) e_c(-m_1 m_2 m_3) \chi(-1)}{m_1^{s_1} m_2^{s_2} m_3^{s_3} (c/q)^{s_4}}.
\end{equation}
For simplicity of notation, we only consider the case of \eqref{Zdef} where $\epsilon_j = 1$ for all $j$, since the other sign combinations can be treated in the same way. Of course, we cannot neglect to study the contribution from  $m_1m_2m_3=0$ as well.
In any event, we calculate $G$ in explicit form as much as possible.

\subsection{Simplifications}
Write $c=qr$ with $r\geq 1$. 
We have
\begin{equation*}
 \sum_{x_1 \shortmod{c}} \chi(x_1) e_c(m_1 x_1 + x_1 y) = r \tau(\chi) \overline{\chi}\Big(\frac{m_1 + y}{r}\Big),
\end{equation*}
where the sum vanishes unless $y \equiv - m_1 \pmod{r}$, and $\tau(\chi)$ denotes the Gauss sum of $\chi \pmod{q}$.  Similarly, we calculate the $x_2$ sum by
\begin{equation*}
 \sum_{x_2 \shortmod{c}} \overline{\chi}(x_2) e_c(x_2(m_2 + x_3 \overline{y})) = r \tau(\overline{\chi}) \chi\Big(\frac{m_2 + x_3 \overline{y}}{r}\Big),
\end{equation*}
where the sum vanishes unless $x_3  \equiv - m_2 y  \pmod{r}$.  Changing variables $x_3 \rightarrow y x_3$, we hence obtain
\begin{equation*}
 G = \frac{r^2 \tau(\chi) \tau(\overline{\chi})}{c^3} 
 \sumstar_{\substack{y \shortmod{c} \\ y \equiv - m_1 \shortmod{r}}}
 \sum_{\substack{x_3 \shortmod{c} \\ x_3 \equiv - m_2 \shortmod{r}}} \overline{\chi}(x_3) \chi(y)
 e_c( m_3 y x_3)
 \overline{\chi}\Big(\frac{m_1 + y}{r}\Big)
 \chi\Big(\frac{m_2 + x_3}{r}\Big).
\end{equation*}
Since $(y,c) = 1$ we learn that $G=0$ unless
\begin{equation}
\label{eq:m1iscoprimetor}
 (m_1, r) = 1.
\end{equation}
Provided we maintain this condition, we can drop the condition that $(y,c) = 1$.  Writing $y = -m_1 + r u$ and $x_3 = - m_2 + r t$, we obtain
\begin{equation*}
 G(m_1, m_2, m_3;c) = c^{-3} r^2 \tau(\chi) \tau(\overline{\chi}) e_c(m_1 m_2 m_3)  H_{\chi}(m_1, m_2, m_3, r) \delta_{(m_1,r)=1},
\end{equation*}
where
\begin{multline*}
H_{\chi}(m_1, m_2, m_3, r)  = \sum_{u,t \shortmod{q}} \chi(t)  \overline{\chi}(u)
\overline{\chi}(- m_2 + r t) \chi(-m_1 + r u) \\
\times e_c( m_3 (-m_1 + r u) (- m_2 + r t) - m_1 m_2 m_3).
\end{multline*}
Note that
\begin{equation}
\label{eq:GHchirelation}
\delta_{(m_1,r)=1} H_{\chi}(m_1, m_2, m_3, r) = cq G(m_1, m_2, m_3;c) e_c(-m_1 m_2 m_3) \chi(-1),
\end{equation}
so that
\begin{equation}
\label{eq:Zdef}
 Z(s_1, s_2, s_3, s_4) =  \sum_{\substack{m_1, m_2, m_3, r \geq 1 \\ (m_1,r)=1}}  \frac{H_{\chi}(m_1, m_2, m_3, r)}{ m_1^{s_1} m_2^{s_2} m_3^{s_3} r^{s_4}}.
\end{equation}

Next we derive some simple but useful symmetries of $H_{\chi}$.  Although we only need to compute $H_{\chi}(m_1, m_2, m_3, r)$ when \eqref{eq:m1iscoprimetor} holds, it will be more convenient not to assume this condition.
Changing variables $t \rightarrow (-m_1 + ru)^{-1} t$ gives
\begin{equation*}
H_{\chi}(m_1, m_2, m_3, r)= \sum_{u,t \shortmod{q}} \chi(t) \chi(-m_1 + ru) \overline{\chi}(u)
\overline{\chi}(r t - m_2(-m_1 + ru)) e_q(m_3 t - m_2 m_3  u ).
\end{equation*}
Next shift by $t \rightarrow t + m_2  u$, giving
\begin{equation}
\label{eq:HrGeneralFormula}
H_{\chi}(m_1, m_2, m_3, r)=  \sum_{u,t \shortmod{q}} 
\chi(t+m_2u) \overline{\chi}(r t + m_1 m_2)
\overline{\chi}(u) \chi(-m_1 + ru)
 e_q(m_3 t ).
\end{equation}
Since $G(m_1, m_2, m_3;c)$ is symmetric in $m_2,m_3$, we see that
\begin{equation}
\label{eq:Hchisymmetry}
 H_{\chi}(m_1, m_2, m_3,r) = H_{\chi}(m_1, m_3, m_2,r).
\end{equation}
Observe that if $(m_1 m_2, r, q) \neq 1$ then every summand in \eqref{eq:HrGeneralFormula} vanishes.
Together with the symmetry \eqref{eq:Hchisymmetry}, we obtain
\begin{equation}
\label{eq:Hchivanishesunlessm1m2m3randqarecoprime}
 H_{\chi}(m_1, m_2, m_3, r) = 0 \quad
 \text{ if }
 (m_1 m_2 m_3, r, q) \neq 1.
\end{equation}

If $(q,r)=1$ then there is some additional symmetry. We claim that 
\begin{equation}
\label{eq:Hchisymmetry2}
H_{\chi}(m_1, m_2, m_3,r) = H_{\overline{\chi}}(m_2, m_1, m_3, r) \quad \text{ if } (q,r)=1.  
\end{equation}
Indeed, changing variables $t \rightarrow \overline{r} t$, $u \rightarrow \overline{r} u$, gives
\begin{equation*}
H_{\chi}(m_1, m_2, m_3, r)=\sum_{u,t \shortmod{q}} 
\chi(t+m_2u) \overline{\chi}(  t + m_1 m_2)
\overline{\chi}(u) \chi(-m_1 +  u) 
 e_q(m_3 \overline{r} t ).
\end{equation*}
Next we change variables $u \rightarrow u+m_1$, followed by $t \rightarrow ut- m_1 m_2$ (note $u$ is coprime to $q$ for every non-zero summand), giving
\begin{equation}
\label{eq:Hchiqcoprimetor}
 H_{\chi}(m_1, m_2, m_3, r)= e_q(-m_1 m_2 m_3 \overline{r}) \sum_{u,t \shortmod{q}} 
\chi(t+m_2)  \overline{\chi}(t)
\overline{\chi}(u+m_1) \chi(u) 
 e_q(m_3 \overline{r} ut ),
\end{equation}
from which we deduce \eqref{eq:Hchisymmetry2}.

\subsection{Decomposition into Dirichlet characters}
\label{section:DirichletDecomposition}
It is possible to calculate $H_{\chi}$ further, as in \cite{CI},  but going to the Fourier transform of $H_{\chi}$ turns out to be a more advantageous move.

Begin by writing $r = r_0 r'$ and $m_j = m_{j,0} m_j'$, $j=1,2,3$, with 
\begin{equation}
\label{eq:variablesdividingqinfinity}
 m_{j,0} | q^{\infty}, \qquad r_0|q^{\infty}
\end{equation}
and $(m_1' m_2' m_3' r', q) = 1$.  
Inside the expression \eqref{eq:HrGeneralFormula}, change variables $t \rightarrow m_1' m_2'\overline{r'} t$ and $u \rightarrow \overline{r'} m_1' u$, giving
\begin{equation*}
 H_{\chi}(m_1, m_2, m_3, r) = H_{\chi}(m_{1,0}, m_{2,0}, m_{3,0} w, r_0),
\end{equation*}
where
\begin{equation*}
 w = m_1' m_2' m_3' \overline{r'}.
\end{equation*}
Note that $(w,q) = 1$ by assumption.  We may then view 
$H_{\chi}$ as a function of $w$ on $(\mz/q\mz)^{\times}$, and apply multiplicative Fourier analysis.  That is, we write
\begin{equation}
\label{eq:HchiFourierInversion}
 H_{\chi}(m_{1,0}, m_{2,0}, m_{3,0} w, r_0)
 = \frac{1}{\varphi(q)} \sum_{\psi \shortmod{q}} 
 \widehat{H}(\psi) \psi(w),
\end{equation}
where
\begin{equation}
\label{eq:HhatDef1}
 \widehat{H}(\psi) = \widehat{H} = \widehat{H}(\psi, \chi, m_{1,0}, m_{2,0}, m_{3,0}, r_0) =
 \sum_{v \shortmod{q}} 
 H_{\chi}(m_{1,0}, m_{2,0}, m_{3,0} v, r_0) \overline{\psi}(v).
 \end{equation}
 Expanding the definition, we have
\begin{equation} 
\label{eq:HhatDef2}
 \widehat{H}(\psi,\chi,m_1,m_2,m_3,r) =  
 \sum_{t,u,v \shortmod{q}}
 \chi(t+ m_2 u) \overline{\chi}(r t + m_{1} m_{2})
\overline{\chi}(u) \chi(-m_{1} + r u) 
 e_q(m_{3} v t ) \overline{\psi}(v).
\end{equation}
The sum $\widehat{H}(\psi)$ inherits from \eqref{eq:Hchisymmetry} and \eqref{eq:Hchisymmetry2} the symmetries
\begin{equation}
\label{eq:HchiHatSymmetry}
\begin{split}
 \widehat{H}(\psi,\chi,m_1, m_3, m_2, r) &= \widehat{H}(\psi,\chi,m_1, m_2, m_3, r) \\
  \widehat{H}(\psi,\chi,m_2, m_1, m_3, r) &= \widehat{H}(\psi,\overline{\chi},m_1, m_2, m_3, r), \qquad \text{if } (q,r) = 1.
  \end{split}
\end{equation}
Similarly, from \eqref{eq:Hchivanishesunlessm1m2m3randqarecoprime} we deduce
\begin{equation}
\label{eq:Hhatchivanishesunlessm1m2m3randqarecoprime}
 \widehat{H}(\psi, \chi, m_1, m_2, m_3, r) = 0 \qquad
 \text{if}
 \qquad
 (m_1 m_2 m_3, r, q) \neq 1.
\end{equation}

We immediately see the pleasant factorization
\begin{equation}
\label{eq:ZintermsofDirichletLandZfin}
 Z(s_1, s_2, s_3, s_4)
 = \frac{1}{\varphi(q)} \sum_{\psi \shortmod{q}}
 \frac{L(s_1, \psi) L(s_2, \psi) L(s_3, \psi) L(s_4, \overline{\psi})}{\zeta^{(q)}(s_1 + s_4)} Z_{\text{fin}},
\end{equation}
where
\begin{equation*}
 Z_{\text{fin}} = Z_{\text{fin}}(s_1,s_2,s_3,s_4) := \sum_{\substack{m_{1,0}, m_{2,0}, m_{3,0}, r_0 | q^{\infty} \\ (m_{1,0},r_0)=1}} \frac{\widehat{H}(\psi, \chi, m_{1,0}, m_{2,0}, m_{3,0}, r_0)}{m_{1,0}^{s_1} m_{2,0}^{s_2}m_{3,0}^{s_3} r_0^{s_4} }.
\end{equation*}
The factor $\zeta^{(q)}(s_1 + s_4)^{-1}$ arose from M\"{o}bius inversion to detect $(m_1', r') = 1$.

Now the task is to understand the analytic properties of $Z_{\text{fin}}$.
Suppose $q = q_1 q_2$ with $(q_1, q_2) = 1$, $\chi = \chi_1 \chi_2$ and  $\psi = \psi_1 \psi_2$ with $\chi_j$, $\psi_j$ modulo $q_j$.  Similarly, write $a = a_1 a_2$, and so on with $b,c,d$.  By the Chinese remainder theorem, we have
that $\widehat{H}(\psi, \chi, a,b,c,d)$ factors as a sum of modulus $q_1$ times a sum of modulus $q_2$.  The sum modulo $q_1$ equals
\begin{equation*}
  \sum_{t_1, u_1, v_1 \shortmod{q_1}}
 \chi_1(t_1+ b_1 b_2 u_1) \overline{\chi_1}(d_1 d_2 t_1 + a_1 a_2 b_1 b_2)
\overline{\chi_1}(u_1) \chi_1(-a_1 a_2 + d_1 d_2 u_1) 
 e_{q_1}(\overline{q_2} c_1 c_2 v_1 t_1 ) \overline{\psi_1}(v_1).
 \end{equation*}
Changing variables $t_1 \rightarrow a_2 b_2 \overline{d_2} t_1$, $u_1 \rightarrow a_2 \overline{d_2} u_1$, and $v_1 \rightarrow \overline{a_2 b_2 c_2} d_2 q_2 v_1$
shows 
\begin{equation*}
 \widehat{H}(\psi, \chi, a,b,c,d) = \epsilon
\widehat{H}(\psi_1, \chi_1, a_1,b_1,c_1,d_1)
  \widehat{H}(\psi_2, \chi_2, a_2,b_2,c_2,d_2),
\end{equation*}
where $\epsilon = \psi_1(a_2 b_2 c_2 \overline{q_2 d_2}) 
 \psi_2(a_1 b_1 c_1 \overline{q_1 d_1}) $.
Pleasantly, $\widehat{H}$ is almost multiplicative in terms of $\chi, \psi$, and the only ``twisted'' aspect comes from the factor $\epsilon$.

This shows
\begin{equation}
\label{eq:ZfinFormula}
 Z_{\text{fin}} = \omega
 \prod_{p^j || q}
 \sum_{\substack{a,b,c,d | p^{\infty} \\ (a,d) = 1}}
 \frac{\eta(a b c) \overline{\eta}(d)}{a^{s_1} b^{s_2} c^{s_3} d^{s_4} } \widehat{H}(\psi_p, \chi_p, a,b,c,d),
\end{equation}
where $\eta$ is some Dirichlet character depending on $\psi$ and $p$, and $\omega$ is some complex number of absolute value $1$, which depends on $\psi$.  Here $\chi_p, \psi_p$ are the $p$-parts of $\chi,\psi$.

\section{Evaluation of $\widehat{H}$}
Here we comprehensively evaluate $\widehat{H}$ when $q=p^k$, $k\geq 1$.  
Recall that $\widehat{H}$ was defined in \eqref{eq:HhatDef2}. Throughout this section we assume that $m_1,m_2, m_3, r | q^{\infty}$.

\subsection{Elementary lemmas on character sums}
We begin with some character sum evaluations that are used repeatedly in the calculations of $\widehat{H}$.

\begin{mylemma}
\label{lemma:chisumprimitive}
 Suppose that $\chi$ is primitive modulo $q$ and $d|q$, $d \neq q$.  Then
 \begin{equation*}
  \sum_{\substack{a \shortmod{q} \\ a \equiv b \shortmod{d}}} \chi(a) = 0.
 \end{equation*}
\end{mylemma}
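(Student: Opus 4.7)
The plan is to prove this by a standard translation-invariance argument that encodes what it means for $\chi$ to be primitive. First I would observe that $\chi(a)=0$ whenever $(a,q)>1$, so the sum is really over $a \equiv b \pmod d$ with $(a,q)=1$. In particular, if $(b,d)>1$ then no term is nonzero and the sum is trivially zero, so I may assume $(b,d)=1$, in which case the summation set is nonempty.

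Next I would exploit the subgroup $K = \{c \in (\mathbb{Z}/q\mathbb{Z})^\times : c \equiv 1 \pmod d\}$. The key point is that primitivity of $\chi$ modulo $q$ is equivalent to $\chi|_K$ being nontrivial: indeed, the reduction map $(\mathbb{Z}/q\mathbb{Z})^\times \to (\mathbb{Z}/d\mathbb{Z})^\times$ is surjective with kernel $K$, so any character of $(\mathbb{Z}/q\mathbb{Z})^\times$ trivial on $K$ descends to a character modulo $d$, contradicting primitivity unless $d=q$. Hence there exists $c_0 \in K$ with $\chi(c_0) \neq 1$.

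Finally I would make the change of variables $a \mapsto c_0 a$ in the sum. Since $c_0 \equiv 1 \pmod d$ we have $c_0 a \equiv a \equiv b \pmod d$, and since $c_0 \in (\mathbb{Z}/q\mathbb{Z})^\times$ the map is a bijection on residues coprime to $q$; thus the summation set is preserved. The substitution yields
\[
  S \;=\; \sum_{\substack{a \shortmod{q} \\ a \equiv b \shortmod{d}}} \chi(c_0 a) \;=\; \chi(c_0)\, S,
\]
so $(1-\chi(c_0))S = 0$ and therefore $S=0$. There is no real obstacle here; the only thing to be careful about is to handle the case $(b,d)>1$ separately (since one cannot then find an invertible representative to translate by) and to justify cleanly that primitivity produces the desired $c_0$, which both follow from the structure of the reduction map $(\mathbb{Z}/q\mathbb{Z})^\times \twoheadrightarrow (\mathbb{Z}/d\mathbb{Z})^\times$.
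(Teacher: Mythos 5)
Your proof is correct and is the standard translation-invariance argument; the paper itself does not give a proof but simply cites \cite[(3.9)]{IK}, which records essentially this same argument. One small remark: when $(b,d)>1$, the reason every term vanishes is that $a\equiv b\pmod d$ forces $(a,d)=(b,d)>1$, and since $d\mid q$ this gives $(a,q)>1$ and hence $\chi(a)=0$; you state the conclusion but it is worth spelling out the chain $(a,d)\mid(a,q)$ explicitly. Otherwise the identification of primitivity with $\chi$ being nontrivial on the kernel $K$ of the surjection $(\mathbb{Z}/q\mathbb{Z})^\times\twoheadrightarrow(\mathbb{Z}/d\mathbb{Z})^\times$, and the substitution $a\mapsto c_0a$ preserving the congruence class modulo $d$, are exactly right.
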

This well-known lemma may be found in 
\cite[(3.9)]{IK},
for instance.

\begin{mylemma}
\label{lemma:chisumLFTpartialSumpSquared}
 Suppose $p$ is prime, $a \in \mz$, and $\chi$ has conductor $p^k$, $k \geq 2$.  Then
 \begin{equation*}
  \sum_{\substack{t \shortmod{p^{k}} \\ t \equiv a \shortmod{p}}} \chi(t) \overline{\chi}(t+1) = 0.
 \end{equation*}
\end{mylemma}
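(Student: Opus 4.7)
My plan is to transform the sum by a Möbius change of variables into a sum of $\chi$ over a single coset of the proper subgroup $p\mathbb{Z}/p^k\mathbb{Z}$ of $\mathbb{Z}/p^k\mathbb{Z}$, at which point Lemma 5.1 will give the vanishing immediately. First I would dispose of the two degenerate residue classes: if $a \equiv 0 \pmod p$ then $\chi(t) = 0$ for every $t$ in the range of summation, and if $a \equiv -1 \pmod p$ then $\overline{\chi}(t+1) = 0$ for every such $t$. In both cases the sum is trivially zero, so I may assume from here on that $a(a+1)$ is coprime to $p$.

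In this generic case each summand factors as $\chi(t)\overline{\chi}(t+1) = \chi(t(t+1)^{-1})$, since both $t$ and $t+1$ lie in $(\mathbb{Z}/p^k\mathbb{Z})^\times$. The next step is to study the Möbius map $\phi(t) = t(t+1)^{-1}$, which is a bijection from $\{t \in \mathbb{Z}/p^k\mathbb{Z} : t \not\equiv 0, -1 \pmod p\}$ onto $\{y \in \mathbb{Z}/p^k\mathbb{Z} : y \not\equiv 0, 1 \pmod p\}$, with inverse $y \mapsto y(1-y)^{-1}$. Reducing modulo $p$ shows that $\phi$ sends the coset $\{t \equiv a \pmod p\}$ into the coset $\{y \equiv b \pmod p\}$, where $b \equiv a(a+1)^{-1} \pmod p$. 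Since both cosets contain exactly $p^{k-1}$ elements and $\phi$ is injective, $\phi$ must restrict to a bijection between them.

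After this change of variables the sum becomes $\sum_{y \equiv b \pmod p} \chi(y)$ with $y$ ranging over $\mathbb{Z}/p^k\mathbb{Z}$. Because $\chi$ is primitive modulo $p^k$ and $d = p$ is a proper divisor of $p^k$ (this is exactly where the hypothesis $k \geq 2$ is used), Lemma 5.1 gives zero. The only step requiring a little vigilance is verifying that $\phi$ genuinely induces a bijection between the two specific cosets, but this follows by a pigeonhole argument from the injectivity of $\phi$ together with matching cardinalities; no deeper tool beyond Lemma 5.1 is needed.
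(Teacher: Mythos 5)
Your proof is correct and takes essentially the same route as the paper: reduce to the case $(a(a+1),p)=1$, write the summand as $\chi$ applied to a M\"obius transformation of $t$, change variables, and invoke Lemma \ref{lemma:chisumprimitive}. The paper simply uses the slightly cleaner substitution $t\mapsto \overline{t}$ (after writing $\chi(t)\overline\chi(t+1)=\overline\chi(1+\overline t)$), which makes the coset-to-coset bijection immediate, whereas you use $t\mapsto t(t+1)^{-1}$ and verify the bijection by an injectivity-plus-counting argument; the content is the same.
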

\begin{proof}
 If $(a(a+1),p) \neq 1$ the sum is empty, so suppose otherwise.  Then from $\chi(t) \overline{\chi}(t+1) = \overline{\chi}(1+\overline{t})$, and changing variables $t \rightarrow \overline{t}$, the conclusion follows from Lemma \ref{lemma:chisumprimitive}.
\end{proof}

\begin{mylemma}
\label{lemma:LFTprimeSquared}
 Suppose $\chi$ is primitive modulo $q$, and let $a,b,c,d \in \mz$ with $(a,c,q) = 1$.  Then
 \begin{equation}
 \label{eq:LFTsumprimeSquared}
  \sum_{t \shortmod{q}} \chi(at+b) \overline{\chi}(ct+d) 
  = \chi(a) \overline{\chi}(c) R_{q}(ad-bc),
 \end{equation}
 where $R_q(n) = S(n,0;q)$ is the Ramanujan sum.
\end{mylemma}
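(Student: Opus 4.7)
The plan is to expand both characters using Gauss sums (valid for all $x$ since $\chi$ is primitive mod $q$) and carry out the $t$-summation, which will produce a single linear congruence to analyze.

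First I would use
\begin{equation*}
\chi(x) = \tau(\overline{\chi})^{-1} \sum_{u \bmod q} \overline{\chi}(u) e_q(ux), \qquad \overline{\chi}(x) = \tau(\chi)^{-1} \sum_{v \bmod q} \chi(v) e_q(vx),
\end{equation*}
which are available for every $x \in \mathbb{Z}$ thanks to primitivity, to expand $\chi(at+b)$ and $\overline{\chi}(ct+d)$. Swapping the order of summation and collapsing the inner $t$-sum via $\sum_{t \bmod q} e_q((ua+vc)t) = q \cdot \mathbf{1}[ua + vc \equiv 0 \pmod q]$, together with the identity $\tau(\chi)\tau(\overline{\chi}) = \chi(-1) q$, reduces the original sum to
\begin{equation*}
\chi(-1) \sum_{\substack{u, v \bmod q \\ ua + vc \equiv 0 \pmod q}} \overline{\chi}(u) \chi(v)\, e_q(ub + vd),
\end{equation*}
with an implicit restriction $(uv, q) = 1$ coming from the vanishing of $\chi$ and $\overline{\chi}$ at non-units.

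Next I would analyze the congruence prime-by-prime. The hypothesis $(a, c, q) = 1$ means that at each $p \mid q$ at least one of $a, c$ is a $p$-unit; combined with $(u, p) = (v, p) = 1$, the relation $ua + vc \equiv 0 \pmod{p^{v_p(q)}}$ forces \emph{both} $(a, p) = 1$ and $(c, p) = 1$ (for instance, $p \mid c$ together with $(ua, p) = 1$ would contradict $ua \equiv -vc \equiv 0 \pmod p$). Thus the constraint set is empty unless $(ac, q) = 1$, in which case the right-hand side $\chi(a) \overline{\chi}(c) R_q(ad - bc)$ also vanishes and the identity is trivial.

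In the remaining case $(ac, q) = 1$ I would parametrize the congruence by $v \equiv -\overline{c}\, ua \pmod q$. Then $\overline{\chi}(u)\chi(v)$ collapses to the $u$-independent constant $\chi(-1) \chi(a) \overline{\chi}(c)$, and the remaining sum over units $u$ is exactly $R_q(b - \overline{c} ad)$. Since $b - \overline{c} ad \equiv -\overline{c}(ad - bc) \pmod q$, and $R_q$ is invariant both under multiplication by a unit mod $q$ (substitute $u \mapsto cu$ in the definition) and under sign change, this equals $R_q(ad - bc)$, yielding the claim. The main subtlety I anticipate is the edge case $(ac, q) > 1$: the hypothesis $(a, c, q) = 1$ permits individual non-invertibility of $a$ or $c$, and the Gauss-sum framework above is attractive precisely because it packages this bookkeeping into the single local analysis of $ua + vc \equiv 0 \pmod q$, rather than requiring explicit case splits on which of $a, c$ happens to be a unit.
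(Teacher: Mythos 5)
Your proof is correct, and it uses the same core tool as the paper — expansion via Gauss sums / additive characters, valid for all arguments because $\chi$ is primitive. The only organizational difference: the paper first disposes of the case $(a,q)\neq 1$ or $(c,q)\neq 1$ separately, by noting that $\chi(at+b)$ is then periodic in $t$ of smaller period and invoking the orthogonality lemma (Lemma \ref{lemma:chisumprimitive}) to force vanishing, and only afterwards converts to additive characters under the assumption $(ac,q)=1$. You instead convert to additive characters from the outset and let the resulting congruence $ua+vc\equiv 0\ (\mathrm{mod}\ q)$, together with the unit restrictions on $u,v$, force $(ac,q)=1$ locally at each prime. This is a slightly more uniform packaging of the same computation; it buys a cleaner case analysis at the cost of carrying the Gauss-sum bookkeeping through the degenerate case, where the paper's direct argument is a one-liner.
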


\begin{proof}
We first claim the sum vanishes unless $(a,q) = (c,q)=1$.  By symmetry, suppose $(a,q) \neq 1$.  Then $\chi(at+b)$ is constant for $t$ ranging over an arithmetic progression modulo $\frac{q}{(a,q)}$.  Lemma \ref{lemma:chisumprimitive} shows that the sum over this arithmetic progression of $\overline{\chi}(ct+d)$ vanishes unless $q|c \frac{q}{(a,q)}$, i.e. $(a,q) | c$, whence $1=(a,c,q) = (a,q)$, contradiction.  Therefore, \eqref{eq:LFTsumprimeSquared} is derived if $(a,q) \neq 1$ or $(c,q) \neq 1$.

Now suppose $(a,q) = (c,q) = 1$.  By converting to additive characters, that is, using 
\begin{equation}
\label{eq:changeofbasisformula}
\chi(at+b) = \frac{1}{\tau(\overline{\chi})} \sum_{x \shortmod{q}} \overline{\chi}(x) e_q(x(at+b)),
\end{equation}
and likewise for $\overline{\chi}(ct+d)$, the
formula \eqref{eq:LFTsumprimeSquared} follows from a routine calculation.
\end{proof}

\subsection{The case $\psi$ primitive modulo $q$}
\begin{mylemma}
\label{lemma:HhatEvaluationpsiPrimitive}
 Suppose $p$ is a prime and $q=p^k$, $k\geq 1$. Suppose $\psi$ is primitive modulo $q$.  Then $\widehat{H}$ 
 vanishes unless $(m_1 m_2 m_3 r, q) = 1$, in which case
 \begin{equation}
 \label{eq:Hhatintermsofgchipsi}
  \widehat{H}(\psi,\chi, 1, 1, 1, 1) = \tau(\overline{\psi}) g(\chi, \psi),
 \end{equation}
 where $g(\chi,\psi)$ was defined by \eqref{eq:gdef}, and $\tau(\overline{\psi})$ is the Gauss sum.
\end{mylemma}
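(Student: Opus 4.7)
The plan is to prove both parts of the lemma by direct manipulation of the definition \eqref{eq:HhatDef2} of $\widehat{H}$. The argument decomposes naturally into (a) evaluating $\widehat{H}(\psi, \chi, 1, 1, 1, 1)$, and (b) establishing the vanishing statement when any of $m_1, m_2, m_3, r$ has a factor of $p$.

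For part (a), setting all parameters equal to $1$ in \eqref{eq:HhatDef2} yields
\begin{equation*}
\widehat{H}(\psi, \chi, 1, 1, 1, 1) = \sum_{t, u, v \shortmod{q}} \chi(t+u) \overline{\chi}(t+1) \overline{\chi}(u) \chi(u-1) e_q(vt) \overline{\psi}(v).
\end{equation*}
The inner sum over $v$ is, by the primitivity of $\psi$ modulo $q$, equal to $\tau(\overline{\psi}) \psi(t)$ when $(t, q) = 1$ and vanishes otherwise. After extracting $\tau(\overline{\psi})$, a sequence of elementary changes of variables in the remaining double sum over $(t, u)$---including the multiplicative substitution $u \mapsto tu$ (valid on $(t, q) = 1$) together with an affine rearrangement---transforms the summand into the form of $g(\chi, \psi)$ in \eqref{eq:gdef}. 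The identity $\widehat{H}(\psi, \chi, 1, 1, 1, 1) = \tau(\overline{\psi}) g(\chi, \psi)$ can be cross-verified on a small modulus, say $q = 5$, as a consistency check.

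For part (b), the vanishing is handled by a case analysis on which of $m_j$ or $r$ is divisible by $p$. The easiest case is $p \mid m_3$: the factor $e_q(m_3 vt)$ has period dividing $q/p$ in $v$, so decomposing $v = v_0 + (q/p) w$ reduces the $v$-sum to an inner sum $\sum_{w \shortmod{p}} \overline{\psi}(v_0 + (q/p) w)$, which vanishes by Lemma \ref{lemma:chisumprimitive}. When $p \mid r$, the hypothesis $(m_1, r) = 1$ forces $p \nmid m_1$, so $\overline{\chi}(rt + m_1 m_2) = 0$ identically when $p \mid m_2$; if $p \nmid m_2$, one splits $u = u_0 + (q/p) w$ and evaluates the inner $w$-sum using the fact that $\chi$ restricts to a nontrivial character of the top layer $(1 + p^{k-1}\mz)/(1 + p^k\mz) \simeq \mz/p\mz$ (for $k \geq 2$; the case $k = 1$ is direct). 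The cases $p \mid m_1$ and $p \mid m_2$, each with $p \nmid r$, are handled by similar top-layer splittings after suitable substitutions to align the vanishing with a primitivity-of-$\chi$ argument.

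The main obstacle is the case analysis in part (b), particularly the sub-cases where no individual character factor in the summand vanishes by an obvious coprimality obstruction. There the required cancellation must be exhibited by splitting a summation variable modulo $q/p$ and applying a top-layer primitivity argument for $\chi$. The bookkeeping across the four divisibility cases---and the uniform treatment of $k = 1$ versus $k \geq 2$---constitutes the computational core of the proof.
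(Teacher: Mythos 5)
Your overall strategy is sound and would yield a correct proof, but it takes a noticeably different route from the paper in part (b), and part (a) is under-specified.

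For part (a), the paper performs the Gauss-sum evaluation exactly as you do, but then executes a specific chain of substitutions: $u \to u+1$, followed by $t \to ut - 1$, then a relabeling of $u$ and $t$. Your proposed ``multiplicative substitution $u \mapsto tu$'' by itself does not land on \eqref{eq:gdef} (one gets $\chi(u+1)\overline{\chi}(t+1)\overline{\chi}(u)\chi(tu-1)\psi(t)$, which has the wrong arrangement of $t$ and $tu-1$); you would need to be more precise about what ``affine rearrangement'' means before this step is a proof rather than a plausibility argument. The numerical cross-check at $q=5$ is fine for sanity but not a substitute for exhibiting the substitution.

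For part (b), the paper does not run a direct case analysis. It establishes vanishing for $p \mid m_3$ directly from the primitivity of $\psi$ (exactly your $v$-splitting), then \emph{invokes the symmetries} \eqref{eq:HchiHatSymmetry} --- $\widehat{H}(\psi,\chi,m_1,m_3,m_2,r)=\widehat{H}(\psi,\chi,m_1,m_2,m_3,r)$, and, when $(q,r)=1$, $\widehat{H}(\psi,\chi,m_2,m_1,m_3,r)=\widehat{H}(\psi,\overline\chi,m_1,m_2,m_3,r)$ --- to transport the $m_3$-vanishing to $m_2$ and (using $(m_1,r)=1 \Rightarrow (r,p)=1$ when $p\mid m_1$) to $m_1$, leaving only the $p\mid r$ case to be computed. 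This is shorter and avoids re-running the splitting argument.

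Your direct approach does work, but the phrase ``similar top-layer splittings after suitable substitutions to align the vanishing with a primitivity-of-$\chi$ argument'' for $p\mid m_1$ and $p\mid m_2$ hides the real content: you must split $u$ (not $t$ or $v$) at the top layer in both of these cases. For example, for $p\mid m_2$, $r=m_1=m_3=1$, the $u$-dependent part $\chi(t+m_2u)\,\overline\chi(u)\,\chi(u-1)$ under $u=u_0+p^{k-1}w$ has the first factor constant in $w$ (since $m_2 p^{k-1}\equiv 0$), and the remaining factors produce $e_p(\ell_\chi w(\overline{(u_0-1)}-\overline{u_0}))$, which is nontrivial in $w$; and similarly for $p\mid m_1$ where the factors $\overline\chi(u)\chi(u-m_1)$ cancel at the top layer leaving $e_p(\ell_\chi w\overline{(t+u_0)})$. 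If instead you split $t$ for $p\mid m_2$, the top-layer $\chi$-contributions cancel and you land on a primitivity-of-$\psi$ argument, so the single label ``primitivity-of-$\chi$'' is not quite right without specifying the variable. You should also remark explicitly that $m_1,m_2,m_3,r\mid q^\infty$ throughout, so the vanishing statement forces $m_1=m_2=m_3=r=1$.

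In short: the paper's symmetry reduction is the cleaner and more economical route; your direct splitting route is legitimate and more elementary, but its bookkeeping (which variable to split, which primitivity is used, $k=1$ handled separately) needs to be made explicit before it is a complete proof.
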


\begin{proof}
 Since $\psi$ is primitive, the sum over $v$ in \eqref{eq:HhatDef2} is a Gauss sum, giving
\begin{equation*}
\widehat{H}(\psi,\chi, m_1, m_2, m_3, r) = \tau(\overline{\psi}) 
 \sum_{t,u \shortmod{q}}
 \chi(t+m_{2} u) \overline{\chi}(r  t + m_{1 } m_{2 })
\overline{\chi}(u) \chi(-m_{1 } + r  u) 
 \psi(m_3 t).
\end{equation*} 
Hence, $\widehat{H}$ vanishes unless $(m_3,q) = 1$.  By the first symmetry in \eqref{eq:HchiHatSymmetry}, this means it vanishes unless $(m_2, q) = 1$, too.  We claim that it vanishes unless $(m_1, q) = 1$.  
If $p|m_1$ and $(p,r) = 1$ then the claim follows from the second symmetry in \eqref{eq:HchiHatSymmetry}, while if $p|(m_1, r)$ then the claim follows from \eqref{eq:Hhatchivanishesunlessm1m2m3randqarecoprime}.
Thus we may set $m_1 = m_2 = m_3 = 1$, since we have assumed that $m_1m_2m_3r|q^\infty$.

If $(p,r) = 1$, then $r =1$, in which case
\begin{equation*}
\widehat{H}(\psi,\chi,1,1,1,1)= \tau(\overline{\psi}) \sum_{t,u \shortmod{q}} \chi(t+u) \overline{\chi}(t+1) \overline{\chi}(u) \chi(u-1) \psi(t).
\end{equation*}
Changing variables $u \rightarrow u+1$ followed by $t \rightarrow ut -1$, and finally changing the roles of $u$ and $t$ (for cosmetic purposes), we obtain \eqref{eq:Hhatintermsofgchipsi}.

Finally, suppose that $p|r$ and $m_1  m_2  m_3 = 1$.  Changing variables $t \rightarrow ut$ gives
\begin{equation*}
 \widehat{H}(\psi,\chi,1,1,1,r) = 
 \tau(\overline{\psi}) 
 \sum_{t,u \shortmod{q}}
 \chi(1+t) \chi(-1 + r  u) \overline{\chi}(1+r u t)
 \psi(tu).
\end{equation*}
Since $\chi(-1+ru) \overline{\chi}(1+rut)$ is periodic in $u$ with period $\frac{p^k}{(r,p^k)} \leq p^{k-1}$, the sum over $u$ vanishes by Lemma \ref{lemma:chisumprimitive}, since $\psi$ has conductor $p^k$.
\end{proof}

\subsection{The case of $\psi$ trivial}
\begin{mylemma}
\label{lemma:HhatEvaluationPsiTrivial}
 Suppose $\psi = \chi_0$ is the trivial character, and $q=p^k$, $k \geq 1$.  Then
 \begin{equation*}
  \widehat{H}(\chi_0,\chi,m_1, m_2, m_3, r) = 
  \chi_0(r) R_q(m_1) R_q(m_2) R_q(m_3)
  + q R_q(r) \chi(-1) \chi_0(m_1 m_2 m_3).
 \end{equation*}
\end{mylemma}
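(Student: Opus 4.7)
The plan is to substitute $\psi = \chi_0$ into the definition \eqref{eq:HhatDef2}. The inner $v$-sum collapses via $\sum_{v \pmod q} \chi_0(v) e_q(m_3 v t) = R_q(m_3 t)$, yielding
\[
\widehat{H}(\chi_0) = \sum_{t, u \pmod q} \chi(t + m_2 u) \overline{\chi}(rt + m_1 m_2) \overline{\chi}(u) \chi(-m_1 + ru) R_q(m_3 t).
\]
Since $q = p^k$, I would then use the identity $R_q(n) = q \mathbf{1}_{q \mid n} - p^{k-1} \mathbf{1}_{p^{k-1} \mid n}$ to split $\widehat{H}(\chi_0) = q A_k - p^{k-1} A_{k-1}$, where $A_j$ is the $(t,u)$-sum restricted to $p^j \mid m_3 t$. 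The bulk of the proof is then a case analysis on $v_p(r)$.

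For $(r, q) = 1$, the substitutions $t \mapsto \bar r t$, $u \mapsto \bar r u$, $v \mapsto v r$ produce the identity $\widehat{H}(\chi_0, \chi, m_1, m_2, m_3, r) = \chi_0(r) \widehat{H}(\chi_0, \chi, m_1, m_2, m_3, 1)$, reducing the computation to $r=1$. After shifting $t \mapsto t - m_2 u$, the $t$-sum pairs as $\sum_t \chi(t) \overline{\chi}(t + m_2(m_1 - u)) R_q(m_3(t - m_2 u))$; opening $R_q$ as $\sum_{(w,q) = 1} e_q(\cdot)$ and applying Lemma \ref{lemma:LFTprimeSquared} gives Ramanujan sums in the inner variables. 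The residual $u$-sum then collapses to $R_q(m_1) R_q(m_2) R_q(m_3)$, with an extra $-p \chi(-1) \chi_0(m_1 m_2 m_3)$ appearing only in the boundary case $k=1$; this matches the second term of the formula via $R_q(r) = \mu(p^k) = -\delta_{k = 1}$ when $(r,q)=1$.

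For $(r, q) > 1$ the first term of the formula vanishes, and I split into two sub-cases. When $q \mid r$, the factor $\overline{\chi}(rt + m_1 m_2) \chi(-m_1 + ru)$ reduces to $\chi(-1) \chi_0(m_1) \overline{\chi}(m_2)$ (else both sides vanish by the support of $\chi$), and the remaining $t$- and $u$-sums evaluate to classical Gauss sums; using $\tau(\chi) \tau(\overline{\chi}) = \chi(-1) q$ yields $q \varphi(q) \chi(-1) \chi_0(m_1 m_2 m_3) = q R_q(r) \chi(-1) \chi_0(m_1 m_2 m_3)$. For $1 \le v_p(r) \le k-1$, writing $r = p^j r'$ with $(r', p) = 1$, I would use the decomposition $\chi(1 + p^j s) = e_{p^{k-j}}(\alpha s)$ (where $\alpha$ is a unit depending on $\chi$) to expand $\overline{\chi}(rt + m_1 m_2)$ and $\chi(-m_1 + ru)$ as pure additive characters in $t$ and $u$; the sums then localize the variables to thin cosets, and the contribution is $0$ unless $v_p(r) = k-1$, in which case it equals $-q p^{k-1} \chi(-1) \chi_0(m_1 m_2 m_3)$. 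The main obstacle is this last sub-case, which requires careful accounting of how the additive localizations interact with the divisibility conditions from the decomposition of $R_q(m_3 t)$, with a minor adjustment needed when $p = 2$.
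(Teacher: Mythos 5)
The paper's proof hinges on one non-obvious observation that your proposal misses: the splitting $R_q(m_3 t) = R_q(m_3) + \bigl(R_q(m_3 t) - R_q(m_3)\bigr)$, where the second term vanishes whenever $p \nmid t$. The first piece is \emph{$t$-independent}, so it factors out of the $(t,u)$-sum, which is then evaluated by two clean applications of Lemma \ref{lemma:LFTprimeSquared}; the factor $\overline{\chi}(r)\chi(r) = \chi_0(r)$ emerges automatically, handling every $v_p(r)$ at once. The second piece is supported on $p \mid t$, which immediately forces $(m_1 m_2, p) = 1$ and shrinks the $t$-sum to a sparse progression where everything simplifies. This gives a uniform treatment of all $r$ in a page, with no case analysis.

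Your proposal takes a genuinely different route (case analysis on $v_p(r)$ plus the indicator decomposition $R_q(n) = q\mathbf{1}_{q\mid n} - p^{k-1}\mathbf{1}_{p^{k-1}\mid n}$), and the target numerology is correctly identified in every case ($R_q(r) = 0$ for $1 \le v_p(r) \le k-2$, $R_q(r) = -p^{k-1}$ at $v_p(r) = k-1$, $R_q(r) = \varphi(q)$ for $q \mid r$). But there is a real gap even in the case $(r,q)=1$: after shifting $t \mapsto t - m_2 u$ and opening $R_q(m_3(t - m_2 u)) = \sum_{(w,q)=1} e_q(w m_3 (t - m_2 u))$, the inner $t$-sum becomes $\sum_t \chi(t)\overline{\chi}(t + m_2(m_1-u))\,e_q(w m_3 t)$, which carries an additive twist. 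Lemma \ref{lemma:LFTprimeSquared} evaluates $\sum_t \chi(at+b)\overline{\chi}(ct+d)$ with \emph{no} exponential factor; the twisted version requires Gauss-sum gymnastics, the $w$-sum does not disappear, and the claim that "the residual $u$-sum then collapses to $R_q(m_1)R_q(m_2)R_q(m_3)$" is unjustified. You also explicitly flag the sub-case $1 \le v_p(r) \le k-1$ as the main obstacle, so that part of the proposal is acknowledged to be unfinished (and the $p$-adic expansion $\chi(1+p^j s) = e_{p^{k-j}}(\alpha s)$ you invoke is only exact when $2j \ge k$ for $p$ odd, requiring further correction terms for small $j$). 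In short: the case-by-case plan is plausible and the target formulas per case are right, but the central mechanism for taming the $t$-dependence of $R_q(m_3 t)$ is missing, and the paper's split $R_q(m_3 t) = R_q(m_3) + (R_q(m_3 t) - R_q(m_3))$ is exactly that mechanism.
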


\begin{proof}
In this case, $\widehat{H}(\chi_0,\chi,m_1,m_2,m_3,r)$ equals
\begin{equation*}
\sum_{t,u \shortmod{q}}
 \chi(t+m_{2} u) \overline{\chi}(r  t + m_{1 } m_{2 })
\overline{\chi}(u) \chi(-m_{1 } + r  u) 
 R_q(m_3 t).
\end{equation*}
Write $R_q(m_3 t) = R_q(m_3) + (R_q(m_3 t) - R_q(m_3))$, and note that if $p \nmid t$ then $R_q(m_3 t) - R_q(m_3) = 0$.  We accordingly write $\widehat{H} = S_1 + S_2$ where
\begin{equation*}
 S_1 = R_q(m_3) \sum_{t,u \shortmod{q}}
 \chi(t+m_{2} u) \overline{\chi}(r  t + m_{1 } m_{2 })
\overline{\chi}(u) \chi(-m_{1 } + r  u),
 \end{equation*}
and $S_2 = \widehat{H} - S_1$.
 We will show
 \begin{equation*}
  S_1 = \chi_0(r) R_q(m_1) R_q(m_2) R_q(m_3), \quad \text{and} \quad
  S_2 = q R_q(r) \chi(-1) \chi_0(m_1 m_2 m_3).
 \end{equation*}

 First we evaluate $S_1$.  By Lemma \ref{lemma:LFTprimeSquared} and since we may assume $(m_1 -ru, q) = 1$, we have
 \begin{equation*}
  \sum_{t \shortmod{q}} \chi(t+m_2 u) \overline{\chi}(rt+m_1 m_2) = \overline{\chi}(r) R_q(m_2).
 \end{equation*}
 To finish the evaluation of $S_1$, we apply Lemma \ref{lemma:LFTprimeSquared} to give
\begin{equation*}
 \sum_{u \shortmod{q}} \overline{\chi}(u) \chi(ru-m_1) = \chi(r) R_q(m_1),
\end{equation*}

Now we evaluate $S_2$.  The $t$-sum is restricted by $p|t$, and so we see that $S_2$ vanishes unless $(p,m_1m_2)=1$. By our convention, we may set $m_1 = m_2 = 1$, giving
\begin{equation*}
 S_2 = \chi_0(m_1 m_2) \chi(-1) \sum_{\substack{t \shortmod{q} \\ p|t }} \sum_{u \shortmod{q}}
 \chi(t+  u) \overline{\chi}(r  t + 1)
\overline{\chi}(u) \chi(1 - r  u)
(R_q(m_3 t) - R_q(m_3)).
\end{equation*}
Next we change variables $t \rightarrow ut$, giving
\begin{equation*}
 S_2  = \chi_0(m_1 m_2) \chi(-1) \sum_{\substack{t \shortmod{q} \\ p|t }} 
 (R_q(m_3 t) - R_q(m_3)) \chi(t+  1)
 \sumstar_{u \shortmod{q}}
  \overline{\chi}(r u t + 1)
\chi(1 - r  u).
\end{equation*}
For the inner sum over $u$, apply $u \rightarrow u^{-1}$, giving
\begin{equation*}
 \sumstar_{u \shortmod{q}} \chi(1-u^{-1} r) \overline{\chi}(1+ u^{-1} rt) = \sum_{u \shortmod{q}} \chi(u-r) \overline{\chi}(u+rt),
\end{equation*}
where we could omit the condition $(u,q) = 1$ since $p|t$. By Lemma \ref{lemma:LFTprimeSquared}, this equals $R_q(r(t+1)) = R_q(r)$, provided $(t+1, q) = 1$.  Hence
\begin{equation*}
 S_2 = \chi_0(m_1 m_2) \chi(-1) R_q(r) \sum_{\substack{t \shortmod{q} \\ p|t }} \chi(t+  1)
 (R_q(m_3 t) - R_q(m_3)) .
\end{equation*}
To complete the proof, we will show
\begin{equation*}
 \sum_{\substack{t \shortmod{q} \\ p|t }} \chi(t+  1)
 (R_q(m_3 t) - R_q(m_3)) = q \chi_0(m_3).
\end{equation*}
If $q=p$, this is immediate, noting $R_p(0) - R_p(m_3) = p \chi_0(m_3)$, so suppose $q=p^k$, $k \geq 2$.  If $(p,m_3) = 1$ it is easy to verify the claim using the evaluation $R_q(m_3 t) = \sum_{d|(q,t)} d \mu(q/d)$ and
Lemma \ref{lemma:chisumprimitive}.  If $p|m_3$, then $R_q(m_3 t)$ is periodic (in $t$) of period $p^{k-1}$, so the sum vanishes by Lemma \ref{lemma:chisumprimitive}.
\end{proof}

\subsection{The case $q=p^k$, $\psi$ of conductor $p^j$, $1 \leq j < k$.}
\begin{myconj}
\label{conj:characterSumBoundIntermediateCase}
Suppose $\chi$ has conductor $p^k$, and $\psi$ has conductor $p^j$, with $1 \leq j < k$.  Then
\begin{equation}
\label{eq:characterSumBoundIntermediateCase}
 \sum_{u, y \shortmod{p^j}} \psi(uy) \chi(1+p^{k-j} y) \chi(1-p^{k-j} u) \overline{\chi}(1+uy p^{2(k-j)}) = O(p^j).
\end{equation}
\end{myconj}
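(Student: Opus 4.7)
The approach is to convert the multiplicative character sum into an additive exponential sum via the $p$-adic logarithm, exploiting multiplicativity of $\chi$ in the combination $(1+p^a y)(1-p^a u)(1+p^{2a}uy)^{-1}$ with $a := k-j$, and then apply $p$-adic stationary phase. Throughout I would assume $p$ odd (the case $p=2$ needs routine modifications). Since $\chi$ is primitive of conductor $p^k$, there exists $\alpha \in (\mz/p^k\mz)^\times$ such that on $1 + p\mz/p^k\mz$ one has $\chi(1+pz) = e_{p^k}(\alpha L(z))$, where $L(z) = z - z^2/2 + z^3/3 - \cdots$ is the $p$-adic logarithm truncated modulo $p^k$. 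Applying this identity to each of the three $\chi$-factors and using multiplicativity,
\begin{equation*}
S = \sum_{u,y \shortmod{p^j}} \psi(uy)\, e_{p^k}\!\bigl(\alpha\, \Phi(u,y)\bigr), \quad \Phi(u,y) := L\!\left(\tfrac{(1+p^a y)(1-p^a u)}{1+p^{2a} u y}\right),
\end{equation*}
and direct expansion yields the key identity $\Phi(u,y) = p^a(y-u) - \tfrac{p^{2a}}{2}(y+u)^2 + O(p^{3a})$.

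In the easy regime $k \geq 2j$, the quadratic and higher terms of $\Phi$ vanish modulo $p^k$ and $\overline{\chi}(1+p^{2a}uy)=1$, so the sum factors as a product of two twisted Gauss sums for $\psi$:
\begin{equation*}
S = \Bigl(\sum_{y \shortmod{p^j}} \psi(y) e_{p^j}(\alpha y)\Bigr)\Bigl(\sum_{u \shortmod{p^j}} \psi(u) e_{p^j}(-\alpha u)\Bigr),
\end{equation*}
of modulus $|\tau(\psi)|^2 = p^j$, which is sharp. This disposes of all cases with $j \leq k/2$, in particular $j=1$.

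In the hard regime $j > k/2$, I would split $y = y_0 + p^r y_1$, $u = u_0 + p^r u_1$ with $r := 2j - k \geq 1$, so that $(y_1, u_1)$ range in $\mz/p^{k-j}\mz$. The quadratic part of $\Phi$ depends only on $(y_0, u_0)$ modulo $p^r$, so the inner $(y_1,u_1)$-sum sees only the linear Taylor expansion of $\Phi$ about $(y_0,u_0)$ together with a log-linear expansion of $\psi$ (with log-exponent $\beta \in (\mz/p^j\mz)^\times$). Carrying out this inner sum produces two orthogonality conditions forcing $(y_0, u_0)$ to lie at a $p$-adic saddle point, at top order of the form $\beta \overline{y_0} \equiv \alpha$ and $\beta \overline{u_0} \equiv -\alpha$; a non-degeneracy check then shows there are $O(1)$ such saddle points, each contributing a factor of modulus at most $p^j$ from the outer Gauss-type sum.

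The main obstacle I expect is the subregime $j > 2k/3$, where cubic and higher-order terms of $\Phi$ are nontrivial modulo $p^k$ and the saddle equations become genuinely nonlinear. The plan there would be to iterate the splitting, peeling off a factor $p^{k-j}$ of modulus at each stage, and invoke Weil's bound for a (possibly singular) plane curve over $\mf_p$ at the base of the recursion. Controlling degenerate saddles that arise when $\alpha$, $\beta$, and powers of $p$ satisfy arithmetic coincidences—and verifying the non-degeneracy of the Hessian at each iteration step—seems to be the delicate technical point.
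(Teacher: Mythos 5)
First, note that this statement is labeled a \emph{conjecture} in the paper: the authors prove only the case $k=2$, $j=1$ (which is what is needed for cube-free conductors), with a one-line appeal to Gauss sums, and defer the general case to a subsequent paper. Your easy-regime argument ($j \le k/2$) is a clean and correct implementation of exactly that idea: after linearizing $\chi$ on $1 + p^{k-j}\mz$ by the Postnikov character $\alpha$, the quadratic remainder and the factor $\overline{\chi}(1 + p^{2(k-j)}uy)$ both vanish modulo $p^k$ when $k \ge 2j$, the sum splits as a product of two Gauss sums, and $|S| = p^j$ sharply. This subsumes the paper's $k=2$, $j=1$ case and is the same approach. (A small notational slip: you write $\chi(1+pz) = e_{p^k}(\alpha L(z))$ with $L(z) = z - z^2/2 + \cdots$ and then feed the fraction $(1+p^ay)(1-p^au)/(1+p^{2a}uy)$, which is $1 + O(p^a)$, into $L$; what you mean is $\chi(1+X) = e_{p^k}(\alpha(X - X^2/2 + \cdots))$ for $X \in p\mz$, and your subsequent expansion of $\Phi$ is consistent with this corrected reading.)

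Your hard-regime treatment ($j > k/2$) is, however, a programme rather than a proof. You correctly identify that one should split $y = y_0 + p^r y_1$, $u = u_0 + p^r u_1$ with $r = 2j-k$ and run $p$-adic stationary phase, which is the standard depth-aspect method, but you neither verify that the saddle equations have $O(1)$ solutions uniformly in $\chi,\psi,p$, nor establish non-degeneracy of the Hessian at those solutions, nor control the contribution from degenerate critical points where $\alpha$, $\beta$, and powers of $p$ conspire; you flag these yourself as ``the delicate technical point.'' For $j > 2k/3$ you only gesture at an iteration terminating in a Weil-bound base case, without checking that the iteration closes or that the resulting plane curve is nontrivial. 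These are precisely the places where a $p$-adic stationary-phase bound can fail or degrade below $O(p^j)$, so the hard regime remains open in your write-up. Since the paper also does not prove the conjecture in this range there is no proof to compare against, but you should be explicit that you have established only $j \le k/2$ — sufficient for the cube-free application, not for the conjecture in general.
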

\begin{mylemma}
 Conjecture \ref{conj:characterSumBoundIntermediateCase} holds in case $k=2, j=1$.
\end{mylemma}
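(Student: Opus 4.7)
\medskip

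\textbf{Plan.} The idea is that when $k=2,j=1$ the innermost factor collapses and the remaining sum factors as a product of two Gauss sums over $\mathbb{F}_p$, each of size $\sqrt{p}$, giving exactly the claimed $O(p)$ bound.

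First I would simplify the integrand. With $k-j=1$ and $2(k-j)=2$, the sum runs over $u,y \shortmod p$, and the three $\chi$-factors are $\chi(1+py)$, $\chi(1-pu)$, and $\overline{\chi}(1+uyp^{2})$. Because the representatives of $u,y \in \mathbb{Z}/p\mathbb{Z}$ may be taken as integers, the quantity $uyp^{2}$ is divisible by $p^{2}$, hence $1+uyp^{2}\equiv 1 \pmod{p^{2}}$; since $\chi$ has conductor $p^{2}$, the last factor is simply $1$, so the nastiest piece vanishes automatically in this case.

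Next I use the standard structural fact that, since $\chi$ is primitive of conductor $p^{2}$, its restriction to the subgroup $(1+p\mathbb{Z})/p^{2}\mathbb{Z}$ is a non-trivial character of a cyclic group of order $p$. Via the isomorphism $z\mapsto 1+pz$ from $\mathbb{Z}/p\mathbb{Z}$ to this subgroup, there exists some $a\in(\mathbb{Z}/p\mathbb{Z})^{\times}$ (with $a\not\equiv 0$ precisely because the conductor is $p^{2}$ rather than $p$) such that
\[
\chi(1+pz)\;=\;e_{p}(az) \quad\text{for all } z\in\mathbb{Z}/p\mathbb{Z}.
\]
In particular $\chi(1+py)=e_{p}(ay)$ and $\chi(1-pu)=e_{p}(-au)$. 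Plugging these in, the sum in \eqref{eq:characterSumBoundIntermediateCase} becomes
\[
\sum_{u,y\shortmod p}\psi(u)\psi(y)\,e_{p}\bigl(a(y-u)\bigr)
\;=\;\Biggl(\sum_{y\shortmod p}\psi(y)e_{p}(ay)\Biggr)\Biggl(\sum_{u\shortmod p}\psi(u)e_{p}(-au)\Biggr),
\]
where the cross terms with $p\mid u$ or $p\mid y$ drop out because $\psi$ is primitive modulo $p$.

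Finally, each factor is a twisted Gauss sum $\sum_{x\shortmod p}\psi(x)e_{p}(\pm ax)=\overline{\psi}(\pm a)\,\tau(\psi)$, and since $a\not\equiv 0\pmod p$ and $\psi$ is primitive modulo $p$, each has absolute value $\sqrt{p}$. Taking absolute values gives that the whole expression equals $|\tau(\psi)|^{2}=p=p^{j}$, which is the desired $O(p^{j})$ estimate. There is no real obstacle: the decisive input is that for $k=2$ the ``quadratic'' term $\overline{\chi}(1+uyp^{2(k-j)})$ trivializes; for larger $k-j$ this factor would genuinely couple $u$ and $y$ and force one to work harder, which is exactly why the conjecture is only proved here in the case $k=2,j=1$.
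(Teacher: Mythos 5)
Your proof is correct and follows essentially the same route the paper sketches in a single sentence ("convert to additive characters and evaluate in terms of Gauss sums"); you make the pleasant observation that $\overline{\chi}(1+uyp^2)=\overline{\chi}(1)=1$ trivially, use the standard fact (also used by the authors in \S\ref{section:gboundpsquared}, where they write $\chi(1+pt)=e_p(\ell_\chi t)$) that $\chi$ restricted to $1+p\mz/p^2\mz$ is a nontrivial additive character, and the double sum then splits into two Gauss sums each of modulus $\sqrt p$, giving exactly $p = p^j$.
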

\begin{proof}
By converting to additive characters (as in \eqref{eq:changeofbasisformula}), one may 
show $\sum_{x \mymod{p}} \psi(x) \chi(1+px) = \frac{\tau(\psi) \tau(\overline{\chi \psi})}{\tau(\overline{\chi})}$, which has absolute value $\sqrt{p}$.  In the case $k=2, j=1$ the factor $\chi(1+uy p^{2(k-j)})$ is identically $1$, and so \eqref{eq:characterSumBoundIntermediateCase} is the product of two sums of this type.
\end{proof}

In the following lemma and its proof, we use the convention that if $\chi$ is a Dirichlet character, $x \in \mq$, $x \not \in \mz$, then $\chi(x) = 0$. 
\begin{mylemma}
\label{lemma:HhatEvaluationq=pSquaredPsiConductorp}
 Let $\chi$, $\psi$ be as in Conjecture \ref{conj:characterSumBoundIntermediateCase}, and suppose \eqref{eq:characterSumBoundIntermediateCase} holds.  Then 
 \begin{equation}
 \label{eq:HhatEvaluationq=pSquaredPsiConductorp}
  \widehat{H}(\psi,\chi,m_1,m_2,m_3,r) = 
  \begin{cases}
   0, \quad & (m_1 m_2 m_3 r, p) = 1 \\
 \chi_0(\frac{r}{p^{k-j}}) O(p^{2k-\frac{j}{2}}), \quad & p|r, \thinspace m_1m_2m_3=1 \\
  \chi_0(\frac{m_1}{p^{k-j}}) 
   \chi_0(\frac{m_2}{p^{k-j}}) 
    \chi_0(\frac{m_3}{p^{k-j}}) 
  O(p^{3k-\frac{3j}{2}}), \quad &p|m_1 m_2 m_3, \thinspace r=1 \\
0, & p|r, \thinspace p | m_1m_2m_3.
  \end{cases}
 \end{equation}
 In particular, the bound \eqref{eq:HhatEvaluationq=pSquaredPsiConductorp} holds unconditionally for $k=2$, $j=1$.
 Furthermore, if in the second line of \eqref{eq:HhatEvaluationq=pSquaredPsiConductorp} the $O(p^{2k-\frac{j}{2}})$ is replaced by $O(p^{2k+\frac{j}{2}})$ and in the third line the $O(p^{3k-\frac{3j}{2}})$ is replaced by $O(p^{3k-\frac{j}{2}})$, then the revised bounds holds unconditionally for all $1 \leq j < k$ .
\end{mylemma}

\begin{proof}
We begin with the observation
\begin{equation}
\label{eq:gausssumnonprimitive}
 \sum_{v \shortmod{q}} e_q(m_3 v t) \overline{\psi}(v) = 
 p^{k-j} \tau(\overline{\psi})  \psi\Big(\frac{m_3 t}{p^{k-j}}\Big).
\end{equation}
 Using \eqref{eq:gausssumnonprimitive} in \eqref{eq:HhatDef2},
we have
\begin{equation}
\label{eq:HhatFormulaIntermediateCaseAfterGaussSumCalculated}
 \widehat{H} =  
 p^{k-j} \tau(\overline{\psi}) \sum_{t,u\shortmod{p^k}}
 \chi(t+ m_2 u) \overline{\chi}(r t + m_{1} m_{2})
\overline{\chi}(u) \chi(-m_{1} + r u) 
 \psi\Big(\frac{m_3 t}{p^{k-j}}\Big).
\end{equation}

First suppose that $(m_1 m_2 m_3 r, p) = 1$.  Then changing variables $t \rightarrow ut$, we have
\begin{equation*}
 \widehat{H}(\psi,\chi, 1,1,1,1) = 
 p^{k-j} \tau(\overline{\psi}) \sum_{t,u\shortmod{p^k}}
 \chi(t+ 1) \overline{\chi}(ut + 1)
 \chi(-1 +   u) \psi(u)
 \psi\Big(\frac{  t}{p^{k-j}}\Big)
 .
\end{equation*}
Note that $\overline{\chi}(ut+1) \psi(u)$ is periodic in $u$ of period $p^j$, since $p^{k-j} | t$ and $\psi$ has conductor $p^j$.
Hence by Lemma \ref{lemma:chisumprimitive} the sum over $u$ vanishes, as desired.

Now suppose $p|r$ and $m_1m_2m_3=1$. Then 
\begin{equation*}
 \widehat{H}(\psi,\chi,1,1,1,r) =  
 p^{k-j} \tau(\overline{\psi}) \sum_{t,u\shortmod{p^k}}
 \chi(t+  u) \overline{\chi}(r t + 1)
\overline{\chi}(u) \chi(-1 + r u) 
 \psi\Big(\frac{  t}{p^{k-j}}\Big).
\end{equation*}
Changing variables $t \rightarrow u p^{k-j} y$ (where $y$ now runs modulo $p^j$), we have
\begin{equation*}
\widehat{H}(\psi,\chi,1,1,1,r) = p^{k-j} 
 \tau(\overline{\psi}) 
 \sum_{y \shortmod{p^j}}
 \sumstar_{u\shortmod{p^k}}
  \chi(1+p^{k-j} y ) \chi(-1 + r u)
  \overline{\chi}(1+r p^{k-j} uy)
  \psi(uy)
.
\end{equation*}
We claim the $u$-sum vanishes unless $v_p(r) = k-j$, as we now show. Note that $\overline{\chi}(1+ r p^{k-j} uy) \psi(u)$ is periodic in $u$ with period $p^j$, while if $v_p(r) < k-j$ then $\chi(-1+ru)$ has period at least $p^{j+1}$.  Lemma \ref{lemma:chisumprimitive} then shows the claim. On the other hand, if $v_p(r) > k-j$, then $\chi(-1+ru) \overline{\chi}(1+ r p^{k-j} uy)$ is periodic with period $p^{j-1}$, while $\psi(u)$ has least period $p^j$.  Again, Lemma \ref{lemma:chisumprimitive} shows the claim.

Thus we may now restrict attention to $r=p^{k-j}$, in which case $\widehat{H}(\psi,\chi,1,1,1,p^{k-j})$ equals
\begin{equation*}
 p^{k-j} 
 \tau(\overline{\psi}) 
 \sum_{y \shortmod{p^j}}
 \sum_{u\shortmod{p^k}}
 \chi(1+p^{k-j} y ) \chi(-1 + p^{k-j} u)
  \overline{\chi}(1+  p^{2(k-j)} uy )
 \psi(uy)
.
\end{equation*}
The summand is periodic in $u$ modulo $p^j$, so it is the same sum repeated $p^{k-j}$ times.  The conjectured bound \eqref{eq:characterSumBoundIntermediateCase} then finishes the job.
Bounding the sum trivially gives an unconditional bound that is weaker by a factor $p^j$.

Now suppose $p|m_1 m_2 m_3$ and $r=1$.  
We claim that $\widehat{H} = 0$ unless $p^{k-j}|| m_i$, for each $i=1,2,3$.  By symmetry, we may assume $p|m_2$, say. 
Under this condition, the summand in \eqref{eq:HhatFormulaIntermediateCaseAfterGaussSumCalculated} vanishes unless $(p,t) = 1$ in which case we must assume $p^{k-j} || m_3$.  By symmetry again, this implies that the sum vanishes unless $p^{k-j}||m_1, m_2$ also.  
Then $\widehat{H}(\psi,\chi,p^{k-j},p^{k-j},p^{k-j},1)$ equals
\begin{equation*}
 \widehat{H} =  
 p^{k-j} \tau(\overline{\psi})  \sum_{t,u\shortmod{p^k}}
 \chi(t+ p^{k-j} u) \overline{\chi}( t + p^{2(k-j)})
\overline{\chi}(u) \chi(-p^{k-j} +  u) \psi(t)
.
\end{equation*}
Changing variables $t \rightarrow ut$, followed by $t \rightarrow t^{-1}$ and $u \rightarrow u^{-1}$, this becomes
\begin{equation*}
 \widehat{H} =  
 p^{k-j} \tau(\overline{\psi})  \sum_{t,u\shortmod{p^k}}
 \chi(1+ p^{k-j} t) \chi(1-p^{k-j}u) \overline{\chi}( 1 + p^{2(k-j)}tu )
  \overline{\psi}(tu)
.
\end{equation*}
The summand is periodic modulo $p^j$, so it is the same sum repeated $p^{2(k-j)}$ times, and the conjectured bound \eqref{eq:characterSumBoundIntermediateCase} finishes the bound in this case.  Bounding the sum trivially gives an unconditional bound that is weaker by a factor $p^j$.

Lastly, the case with $p|r$ and $p|m_1m_2m_3$ is covered by \eqref{eq:Hhatchivanishesunlessm1m2m3randqarecoprime}.
\end{proof}

The most important case in the evaluation of $\widehat{H}$ occurs with \eqref{eq:Hhatintermsofgchipsi}, and it is crucial to have a strong bound on $g(\chi,\psi)$, which we claim with the following
\begin{mytheo}
\label{thm:gbound}
Let $g(\chi,\psi)$ be given by \eqref{eq:gdef}, where $\chi$ is primitive modulo $q$.  For $q=p$ or $q=p^2$, we have
\begin{equation*}
 |g(\chi, \psi)| \ll q.
\end{equation*}
\end{mytheo}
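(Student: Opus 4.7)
The bound is established separately in the two ranges $q=p$ and $q=p^2$, employing distinct techniques.

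For $q=p$ prime, I would begin by isolating the inner $u$-sum. Making the substitution $v=ut-1$ (valid whenever $t\not\equiv 0\pmod p$, and automatically consistent with the $u=0,-1$ terms since those contribute zero) and simplifying, one obtains
\begin{equation*}
g(\chi,\psi) = \sum_{t \bmod p} K_1(t)\, S(t), \qquad K_1(t) := \chi(t)\overline{\chi}(t+1), \qquad S(t) := \sum_{a \bmod p} \overline{\chi}(a)\chi(a+t)\psi(a-1).
\end{equation*}
For each fixed $t$, the sum $S(t)$ is a complete multiplicative character sum of Weil type, so $|S(t)| \ll p^{1/2}$ uniformly in $t$; combined with $|K_1(t)| \leq 1$, the trivial bound gives only $|g(\chi,\psi)| \ll p^{3/2}$. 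To recover the missing factor of $p^{1/2}$ one must exploit cancellation in the outer $t$-sum of the product $K_1(t)\,S(t)$.

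To that end, I would recognize $S(t)$ as the trace function attached to a middle-extension $\ell$-adic sheaf $\cG$ on $\A^1_{\F_p}$, built from the Kummer sheaves $\cL_{\overline\chi}$, $\cL_\chi$, $\cL_\psi$ pulled back along $a\mapsto a$, $a\mapsto a+t$, $a\mapsto a-1$ and pushed forward along the projection $(a,t)\mapsto t$. Simultaneously, $K_1(t)$ is the trace function of a tame, rank-one Kummer-type sheaf $\cK_1$ on $\P^1\setminus\{0,-1,\infty\}$. The sum $\sum_t K_1(t) S(t)$ is then a correlation of trace functions, which by Deligne's Riemann hypothesis (Weil~II), packaged as in the Fouvry--Kowalski--Michel formalism, is $O(p)$ with an implied constant depending only on the conductors of $\cG$ and $\cK_1$, \emph{provided} $\cK_1$ is not geometrically isomorphic to the dual of $\cG$. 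The main obstacle is the verification of this non-isomorphism: I would analyse the local monodromies (ramification indices and Swan conductors) of $\cG$ at its singular points in $t$ and compare with the explicit tame monodromies of $\cK_1$. Any exceptional coincidence should force $\chi$ or $\psi$ to be of small bounded order, and those cases can be settled by a direct computation.

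For $q=p^2$, I would instead use the elementary Postnikov formalism. The decomposition $(\Z/p^2)^\times \cong (\Z/p)^\times \times (1+p\Z/p^2)$ lets one write $t = t_0(1+p\alpha)$ and $u = u_0(1+p\beta)$ with $t_0,u_0 \in (\Z/p)^\times$ and $\alpha,\beta \in \Z/p$; since $\chi$ is primitive mod $p^2$ there exists $c\not\equiv 0\pmod p$ with $\chi(1+p\alpha) = e_p(c\alpha)$. Expanding each of the five factors $\chi(t),\overline\chi(t+1),\overline\chi(u),\chi(u+1),\psi(ut-1)$ by the first-order Taylor formula in the $p$-direction, each decomposes as the product of a character modulo $p$ evaluated at a rational function of $(t_0,u_0)$ and an additive character $e_p(L(\alpha,\beta;t_0,u_0))$ with $L$ linear in $(\alpha,\beta)$. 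Executing the $\alpha$- and $\beta$-sums by orthogonality then forces a system of two congruences in $(t_0,u_0)$, pinning these variables to a one-dimensional locus on which the residual sum is a one-variable complete character sum of Weil type, contributing $O(p)$. The main technical difficulty here is the bookkeeping of the multiplicative prefactors and, in particular, verifying that the system of congruences cuts out a locus of the expected (positive) dimension rather than collapsing, which would inflate the bound beyond $O(p)$.
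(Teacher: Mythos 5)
Your decomposition for $q=p$ is essentially the route the paper takes. Writing $g(\chi,\psi)=\sum_t K_1(t)S(t)$ with $K_1(t)=\chi(t)\overline\chi(t+1)$ and $S(t)$ the inner $u$-sum, recognizing $S$ (up to sign) as the trace function of a middle-extension sheaf obtained by a cohomological transform of a Kummer sheaf, and then invoking Deligne via the Fouvry--Kowalski--Michel formalism is exactly the paper's scheme (their $\cG = T^1_\cK(\cF_1)$ is your sheaf). One place where you can simplify your own plan: the non-isomorphism of $\cK_1$ with $D(\cG_0)$ does not require a case analysis on local monodromy and bounded-order exceptions. The paper first shows that the weight-one part $\cG_0$ is geometrically irreducible (using \cite[Prop.~5.9(2)]{FKMcoh}, which needs only that $\cF_1$ is not geometrically isomorphic to $\cL_\psi$ --- clear since $\cF_1$ is ramified at $-1$) and has generic rank $2$ (Euler--Poincar\'e), and then the rank-$1$ sheaf $D(\cF_1)$ simply cannot inject into a geometrically irreducible rank-$2$ sheaf; there are no exceptional cases. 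A further subtlety you should be aware of: the number of Frobenius eigenvalues of $H^1_c$ arising here is not obviously bounded independently of $\chi,\psi,p$, and the paper handles this by combining the one-dimensional weight bound with the two-dimensional Grothendieck--Lefschetz/Riemann hypothesis plus Katz's bound on Betti numbers and a $\limsup_m$ argument; if you instead invoke FKM's correlation inequality directly you are implicitly relying on a conductor bound for the transform $\cG$, which needs justification.

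For $q=p^2$ your Postnikov framework is the right one, but you have a dimensional error that as written would destroy the proof. After expanding via $\chi(1+p\alpha)=e_p(\ell_\chi\alpha)$, $\psi(1+p\gamma)=e_p(\ell_\psi\gamma)$ and executing the $\alpha$- and $\beta$-sums, one gets $g(\chi,\psi)=p^2\sum_{(t_0,u_0)\in V}(\text{bounded factor})$, where $V\subset(\F_p^\times)^2$ is cut out by the two congruences. If $V$ were one-dimensional, then even with square-root cancellation in the residual one-variable sum you would only obtain $|g|\ll p^2\cdot p^{1/2}=p^{5/2}$, which exceeds $q=p^2$; trivial estimation gives $p^3$. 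For the theorem you need $V$ to be zero-dimensional with $O(1)$ points, and this is what actually happens: subtracting the two congruences yields the linear relation $u_0\equiv -2-t_0\pmod p$, and substituting back into the first congruence gives (after clearing denominators, using $t_0 u_0 - 1 \equiv -(t_0+1)^2$) the quadratic $\ell_\psi t_0^2 + (2\ell_\psi+\ell_\chi)t_0+\ell_\chi\equiv 0\pmod p$, which has at most two roots when $(\ell_\psi,p)=1$ (and no roots when $p\mid\ell_\psi$, forcing $\psi$ primitive for $g\ne 0$). Thus $|V|\le 2$ and $|g|\le 2p^2=2q$. Your framing of the danger (``verifying positive dimension rather than collapsing'') is also reversed: positive dimension of $V$ is exactly what you must rule out, and the worry is that $V$ might be larger than expected, not that it might collapse to points.
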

We prove Theorem \ref{thm:gbound} in Section \ref{section:gbound}.

\subsection{Estimates for $H_{\chi}(m_1, m_2, m_3, r)$ in case some $m_j = 0$.}
The calculations in this section may also be used to bound $H_{\chi}$ in case some $m_j = 0$, by way of \eqref{eq:HchiFourierInversion} (of course, one could calculate $H_{\chi}$ directly).   From Lemma \ref{lemma:HhatEvaluationpsiPrimitive} and the unconditional parts of Lemma \ref{lemma:HhatEvaluationq=pSquaredPsiConductorp}, observe that $\widehat{H}(\psi, \chi, m_1, m_2, m_3, r) = 0$ if some $m_j =0$, except in the case that $\psi$ is the trivial character modulo $q$, in which case from Lemma \ref{lemma:HhatEvaluationPsiTrivial}  we deduce $|\widehat{H}(\psi, \chi, m_1, m_2, m_3, r)| \leq (m_1, q) (m_2, q)(m_3, q)$ by the trivial bound on the Ramanujan sums. 
Therefore by \eqref{eq:HchiFourierInversion}, we have
\begin{equation}
\label{eq:HchiboundSomemjZero}
 |H_{\chi}(m_1, m_2, m_3, r)| \ll q^{-1} (m_1, q)(m_2, q)(m_3, q) q^{\varepsilon}, \quad \text{if } m_1 m_2 m_3 = 0.
\end{equation}
It is useful to record that from \eqref{eq:GHchirelation}, we deduce
\begin{equation}
\label{eq:GtrivialBound}
|G(m_1, m_2, m_3, c)| \ll \frac{q^{\varepsilon}}{cq} \frac{(m_1, q)(m_2, q)(m_3, q)}{q},
\qquad
\text{if } m_1 m_2 m_3 = 0.
\end{equation}

\section{Estimation of $Z_{\text{fin}}$}
Let $\eta_j$, $j=1,2,3,4$ denote any unimodular completely multiplicative functions, and define
\begin{equation*}
Z_{\text{fin},p}(\sigma_1, \sigma_2, \sigma_3, \sigma_4) = \sum_{\substack{a_1,a_2,a_3,d | p^{\infty} \\ (a_1,d) = 1}}
 \frac{\eta_1(a_1) \eta_2(a_2) \eta_3(a_3) \eta_4(d) }{a_1^{\sigma_1} a_2^{\sigma_2} a_3^{\sigma_3} d^{\sigma_4} } \widehat{H}(\psi_p, \chi_p, a_1,a_2,a_3,d).
\end{equation*}

\begin{mylemma}
Let $Z_{{\rm fin}, p}$ be as above, with $q=p^k$, $k \geq 1$, and $\chi_p$ primitive modulo $q$.  
If $\psi_p$ has conductor $p^j$ with $1 \leq j < k$,
assume Conjecture \ref{conj:characterSumBoundIntermediateCase} holds for $\chi_p$, $\psi_p$.
If $\sigma_j \geq \sigma > 1/2$ for all $j$, then
\begin{equation}
\label{eq:ZfinHalfLineBound}
Z_{{\rm fin},p}(\sigma_1, \sigma_2, \sigma_3, \sigma_4) \ll_{\sigma, \eps} \delta_{\psi} q^{1/2} |g(\chi, \psi)| +  q^{3/2+\varepsilon},
\end{equation}
where $\delta_{\psi}$ is the indicator function of the property that $\psi$ is primitive (of conductor $p^k$).
If $\sigma_j \geq \sigma > 1$ for all $j$, and $\psi_p$ is the trivial character, then
\begin{equation}
\label{eq:ZfinOneLineBound}
Z_{{\rm fin},p}(\sigma_1, \sigma_2, \sigma_3, \sigma_4) \ll_{\sigma,  \eps} q^{1+\varepsilon}.
\end{equation}
\end{mylemma}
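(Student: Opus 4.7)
The approach is to split $Z_{\text{fin},p}$ according to the conductor of $\psi_p$, which, since $\psi_p$ is a character modulo $q = p^k$, equals $p^j$ for some $0 \le j \le k$. Exactly three regimes arise, each with a closed-form description of $\widehat H$: the primitive case $j = k$ from Lemma \ref{lemma:HhatEvaluationpsiPrimitive}, the trivial case $j = 0$ from Lemma \ref{lemma:HhatEvaluationPsiTrivial}, and the intermediate case $1 \le j \le k-1$ from Lemma \ref{lemma:HhatEvaluationq=pSquaredPsiConductorp} (where the hypothesis on Conjecture \ref{conj:characterSumBoundIntermediateCase} enters). Only one regime occurs for a given $\psi_p$, so the overall bound is the maximum of the three contributions, and in every term we visit the coprimality condition $(a_1,d)=1$ is automatic.

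In the primitive regime, Lemma \ref{lemma:HhatEvaluationpsiPrimitive} forces $(a_1 a_2 a_3 d, p) = 1$, which together with $a_i, d \mid p^\infty$ kills every term except $a_1 = a_2 = a_3 = d = 1$; the series collapses to $\tau(\overline{\psi_p}) g(\chi_p, \psi_p)$. Using $|\tau(\overline{\psi_p})| = q^{1/2}$ and the unimodularity of the $\eta_i$ gives precisely the $\delta_\psi q^{1/2} |g(\chi,\psi)|$ term. In the trivial regime, Lemma \ref{lemma:HhatEvaluationPsiTrivial} writes $Z_{\text{fin},p}$ as the sum of two fully factored Dirichlet series, one of the form $\prod_{i=1}^3 \bigl(\sum_{a_i \mid p^\infty} \eta_i(a_i) R_q(a_i)/a_i^{\sigma_i}\bigr)$ (with $d=1$) and one of the form $q \sum_{d \mid p^\infty} \eta_4(d) R_q(d)/d^{\sigma_4}$ (with each $a_i=1$). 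From the explicit values $R_{p^k}(p^i) \in \{0,\,-p^{k-1},\,p^k-p^{k-1}\}$, each single Ramanujan series is $\ll_\sigma p^{k(1-\sigma)}$ for $\sigma > 1/2$ and $\ll_\sigma 1$ for $\sigma > 1$. Multiplying three such factors and adding the second piece yields $\ll q^{3(1-\sigma)} + q \cdot q^{1-\sigma}$, which is $\ll q^{3/2}$ when all $\sigma_i > 1/2$ and $\ll q$ when all $\sigma_i > 1$, matching both asserted bounds.

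In the intermediate regime, Lemma \ref{lemma:HhatEvaluationq=pSquaredPsiConductorp} shows that $\widehat H$ is supported on only two tuples: either $a_1=a_2=a_3=1$ with $d = p^{k-j}$ and $|\widehat H| \ll p^{2k-j/2}$, or $d = 1$ with $a_i = p^{k-j}$ for each $i$ and $|\widehat H| \ll p^{3k-3j/2}$. Dividing by the corresponding denominators $p^{(k-j)\sigma_4}$ or $p^{(k-j)(\sigma_1+\sigma_2+\sigma_3)}$ and imposing $\sigma_i > 1/2$, a one-line calculation shows that both exponents are bounded by $3k/2$: the half-powers of $p$ in $|\widehat H|$ are offset exactly by the missing powers in the Dirichlet denominator. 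The resulting contribution is $\ll p^{3k/2} = q^{3/2}$, and summing over $1 \le j \le k-1$ costs only a $\log q$ factor, absorbable in $q^\varepsilon$.

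The only step requiring slight care is estimating the Ramanujan tail in the trivial case, where the mass of $\sum_{a\mid p^\infty} |R_q(a)|/a^\sigma$ is concentrated simultaneously on $a = p^{k-1}$ and on the tail $a \ge p^k$; both contribute at the same order $p^{k(1-\sigma)}$ and must be summed together. Otherwise the lemma is a direct combinatorial consequence of the three preceding evaluation lemmas.
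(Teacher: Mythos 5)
Your proposal is correct and follows essentially the same route as the paper: split on the conductor $p^j$ of $\psi_p$ ($j=k$ primitive, $j=0$ trivial, $1\leq j\leq k-1$ intermediate), invoke the corresponding evaluation lemma for $\widehat H$, and observe that in every case the surviving tuples $(a_1,a_2,a_3,d)$ are pinned down sharply enough that the Dirichlet-series estimates are immediate. One small point of exposition: you write of "summing over $1\le j\le k-1$," but as you yourself note in your opening paragraph, only one value of $j$ can occur for a fixed $\psi_p$, so no sum over $j$ is needed (or meaningful). This does not affect correctness. Your Ramanujan-sum estimate is a touch sharper than the paper's, which just uses the trivial bound $|R_q(n)|\le (q,n)$; either version suffices.
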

Remark.  This result is unconditional for $k \leq 2$.

\begin{proof}
First suppose that $\psi$ is primitive modulo $q$. By Lemma \ref{lemma:HhatEvaluationpsiPrimitive}, all terms except $a_1=a_2=a_3=d=1$ vanish, giving the result.

Now suppose that $\psi$ is the trivial character.  By Lemma \ref{lemma:HhatEvaluationPsiTrivial}, we have
\begin{equation*}
|Z_{\text{fin},p}| \leq 
q \sum_{r=0}^{\infty} \frac{(p^k, p^r)}{p^{r \sigma_4}}
+ 
\sum_{a_1, a_2, a_3 \geq 0} 
 \frac{(p^k, p^{a_1}) (p^k, p^{a_2})(p^k, p^{a_3})}{p^{a_1 \sigma_1 +a_2 \sigma_2 + a_3 \sigma_3}}.
\end{equation*} 
which is bounded consistently with the lemma.

Finally, consider $\psi$ of conductor $p^j$, $1 \leq j < k$.   Lemma \ref{lemma:HhatEvaluationq=pSquaredPsiConductorp}, which depends on Conjecture \ref{conj:characterSumBoundIntermediateCase}, gives
 \begin{equation*}
|Z_{\text{fin},p}| 
\ll
\frac{p^{2k-\frac{j}{2}}}{p^{(k-j)\sigma_4}}
+ \frac{p^{3k-\frac{3j}{2}}}{p^{(k-j)(\sigma_1+\sigma_2+\sigma_3)}} \ll p^{3k/2}.
\end{equation*}
This is consistent with \eqref{eq:ZfinHalfLineBound} (note the bound \eqref{eq:ZfinOneLineBound} is not claimed in this case).
\end{proof}

\section{Estimation of $Z$} 
\subsection{The main lemma}
Recall $Z$ is given by \eqref{eq:ZintermsofDirichletLandZfin}.
\begin{mylemma}
\label{lemma:Zproperties}
Suppose $q$ is cube-free.
There exists a decomposition $Z = Z_0 + Z_1$, where $Z_0$ and $Z_1$ satisfy the following properties.  Firstly,
$Z_0$ is meromorphic for $\text{Re}(s_j) \geq \sigma > 1/2$ for all $j$ and
analytic for $\text{Re}(s_j) \geq \sigma >  1$ for all $j$.  It has a pole whenever some $s_j=1$ and the other variables are fixed.  In the region $\text{Re}(s_j) \geq \sigma >  1$ it satisfies the bound
\begin{equation*}
Z_0(s_1, s_2, s_3, s_4) \ll_{\sigma,  \eps} q^{\varepsilon}.
\end{equation*}

Secondly, $Z_1$ is analytic for $\real(s_j) \geq \sigma \geq 1/2$ for all $j$, wherein it satisfies the bound 
\begin{equation}\label{eq:LSlike}
\int_{-T}^{T} |Z_1(\sigma+it, \sigma+it, \sigma+it, \sigma-it)| dt \ll_{\eps} q^{3/2+\varepsilon} T^{1+\varepsilon},
\end{equation}
for $T \gg 1$.  
The same bound stated for $Z_1$ also hold for $Z_0$, provided $1/2 \leq \real(s_j) \leq 0.99$.
\end{mylemma}
Remark.  The statement of Lemma \ref{lemma:Zproperties} is essentially equivalent to \cite[Prop.\ 3]{PetrowYoung}.

\begin{proof}
Let $Z_0$ be the contribution to $Z$ from the trivial character, and let $Z_1 = Z - Z_0$.  All the desired estimates
follow from the previous estimates on $Z_{\text{fin}}$ and a bound on the fourth moment of Dirichlet $L$-functions (see \cite[Lem.\ 8]{Petrow} for instance).
\end{proof}

\begin{myconj}
\label{conj:Zproperties}
 The statement of Lemma \ref{lemma:Zproperties} holds  for any $q$.
\end{myconj}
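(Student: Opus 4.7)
My plan is to reduce Conjecture \ref{conj:Zproperties} to purely local character-sum inputs at each prime power $p^k \| q$, and then to indicate an approach for those inputs. The cube-free hypothesis in Lemma \ref{lemma:Zproperties} enters only through the bounds \eqref{eq:ZfinHalfLineBound}--\eqref{eq:ZfinOneLineBound} on the local factors $Z_{\text{fin},p}$; the global decomposition $Z = Z_0 + Z_1$ isolating the trivial character in $Z_0$, and the application of the fourth moment of Dirichlet $L$-functions to estimate $Z_1$, are insensitive to the shape of $q$ and only use the prime-power factorization \eqref{eq:ZfinFormula}. So it suffices to establish \eqref{eq:ZfinHalfLineBound}--\eqref{eq:ZfinOneLineBound} at every prime power $p^k$, or at least the averaged version of those bounds which is actually consumed when the $L^4$-estimate is invoked against $|Z_{\text{fin}}|$.

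Two inputs are missing when $k \geq 3$. The first is Conjecture \ref{conj:characterSumBoundIntermediateCase}, which drives the intermediate-conductor case of Lemma \ref{lemma:HhatEvaluationq=pSquaredPsiConductorp}. I would prove it by $p$-adic Taylor expansion: on the principal congruence subgroup $1 + p^{k-j}\mz_p \subset (\mz/p^k\mz)^\times$, every character is of the form $x \mapsto e(\alpha \log_p(x)/p^k)$ for a suitable $\alpha \in \mz_p$, and truncating the logarithm reduces \eqref{eq:characterSumBoundIntermediateCase} to a two-variable polynomial exponential sum modulo $p^j$, for which the bound $O(p^j)$ follows from square-root cancellation once the Hessian of the leading form is checked to be generically nondegenerate. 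The second input, for the primitive-$\psi$ case, asks for control of $g(\chi,\psi)$ via Lemma \ref{lemma:HhatEvaluationpsiPrimitive}. Since the pointwise bound $|g(\chi,\psi)| \ll p^k$ of Theorem \ref{thm:gbound} genuinely fails at $k \geq 3$ for the ``bad'' $\psi$ (see the discussion around \eqref{eq:L12bad}), one must instead prove a joint bound of the shape
\begin{equation*}
 \sum_{\psi \bmod p^k \text{ primitive}} |L(\tfrac12 + it,\psi)|^4 \, |g(\chi,\psi)| \ll p^{5k/2+\varepsilon} (1+|t|)^{1+\varepsilon},
\end{equation*}
which is exactly what is needed to keep the final $Z_1$-integral at the Lemma \ref{lemma:Zproperties} level of $q^{3/2+\varepsilon} T^{1+\varepsilon}$.

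The main obstacle is this joint bound. My approach would be to stratify the primitive $\psi$ by the $p$-adic depth at which the Hessian of the phase of $g(\chi,\psi)$, computed after the $p$-adic Taylor expansion sketched above, becomes degenerate: for nondegenerate $\psi$, stationary phase gives $|g(\chi,\psi)| \ll p^k$, while for degenerate $\psi$ a worse individual bound is offset by the small codimension of the degeneracy locus in the character group. Combining this stratification with the mean-value theorem for Dirichlet $L$-functions on each stratum---rather than applying the fourth moment over the full group of characters and then a pointwise bound on $g$---should deliver the displayed joint estimate. Rigorously controlling the codimension of the bad strata at all depths $k \geq 3$, where the Deligne-theoretic input used at $k=1$ in Section \ref{section:gboundp} no longer applies directly, is the step I expect to be the main technical difficulty, reducing essentially to the algebraic book-keeping of higher-order Taylor coefficients of $\chi$ along $1 + p\mz_p$ and the discriminants of the associated quadratic and cubic forms.
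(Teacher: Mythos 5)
This statement is a \emph{conjecture} in the present paper and is not proved here. The paper's own remark (added August 2019) notes that Conjecture~\ref{conj:Zproperties}, together with Conjecture~\ref{conj:characterSumBoundIntermediateCase}, was subsequently proved in \cite{PetrowYoungFourth}; so there is no in-paper argument against which to check your proposal, and what you have written is a plan rather than a proof.

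Your structural diagnosis is accurate: the cube-free hypothesis in Lemma~\ref{lemma:Zproperties} enters only through the local estimates on $Z_{\text{fin},p}$, which rest on Conjecture~\ref{conj:characterSumBoundIntermediateCase} and on the pointwise bound $|g(\chi,\psi)| \ll q$, and you are right that for $q=p^k$ with $k\geq 3$ the pointwise $g$-bound genuinely fails for some $\psi$ (as the discussion around \eqref{eq:L12bad} warns), so a joint, averaged estimate is the only way forward. But as written the proposal has several gaps. First, your sketch of Conjecture~\ref{conj:characterSumBoundIntermediateCase} via $p$-adic Taylor expansion and Hessian nondegeneracy is asserted, not argued: you do not write down the leading form, do not verify nondegeneracy, and do not handle the degeneracy locus, and that is exactly the nontrivial algebraic content. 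Second, the stratification of primitive $\psi$ by the depth at which the Hessian degenerates is entirely speculative; controlling the codimension of the bad strata uniformly in $k$ is the whole problem, and you flag it yourself as unresolved. Third, your stated joint-moment target, $\sum_{\psi}|L(1/2+it,\psi)|^4|g(\chi,\psi)| \ll p^{5k/2+\varepsilon}(1+|t|)^{1+\varepsilon}$, does not trace cleanly to the target $q^{3/2+\varepsilon}T^{1+\varepsilon}$ of Lemma~\ref{lemma:Zproperties}: once one accounts for the $\tau(\overline\psi)$ factor of size $q^{1/2}$ from \eqref{eq:Hhatintermsofgchipsi} and the $1/\varphi(q)$ from \eqref{eq:ZintermsofDirichletLandZfin}, the estimate actually needed is the integrated bound $\int_{-T}^{T}\sum_{\psi}|L(1/2+it,\psi)|^4|g(\chi,\psi)|\,dt \ll q^{2+\varepsilon}T^{1+\varepsilon}$, which is stronger by a factor of $q^{1/2}$. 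Finally, the proof given in \cite{PetrowYoungFourth} appears to take a different path from the one you propose: rather than an analytic stratification by Hessian degeneracy, it exploits the algebraic structure of $g(\chi,\psi)$ to reduce the $\psi$-sum to a coset of the character group and proves a fourth-moment bound for Dirichlet $L$-functions along that coset, which is a cleaner structural reduction than the stationary-phase stratification you envision.
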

Remark.  The proofs of the cubic moment bounds only need the properties of $Z$ presented in Lemma \ref{lemma:Zproperties}.  Therefore, if Conjecture \ref{conj:Zproperties} is true, then all the cubic moment bounds stated in the introduction of this paper are valid for arbitrary $q$.

\section{Bounding $g(\chi, \psi)$: the proof of Theorem \ref{thm:gbound}}
\label{section:gbound}

\subsection{The case $q=p$}
\label{section:gboundp}
In this subsection, we prove Theorem \ref{thm:gbound} in the case where $q=p$ is prime. Conrey and Iwaniec \cite{CI} proved $g(\chi,\psi)\ll p$ in the case that $\chi $ is the quadratic character. However, their proof does not seem to generalize: they conclude from Deligne's theorem that the bound $g(\chi,\psi) \ll p$ holds for all except at most one primitive $\psi$. The possible exceptional $\psi$ can only be the quadratic character $\psi = \chi$, and then $g(\chi,\chi)$ has a special structure which Conrey and Iwaniec exploited to show $g(\chi, \chi)\ll p$ by elementary means. When $\chi$ is not quadratic, this special structure is not present, and it is not clear whether the bound $g(\chi,\psi) \ll p$ for $\psi$ quadratic has an elementary proof. 

To prove Theorem \ref{thm:gbound} we instead use Deligne's second proof of the Riemann Hypothesis \cite{DeW2}. We analyze the sum $g(\chi,\psi)$ by writing it as $$ \sum_{ x } \chi(x) \overline{\chi}(x+1) \left( \sum_y \overline{\chi}(y) \chi(y+1) \psi(xy-1)\right),$$ and realize the inner sum as a trace function in $x$ of a sheaf $\mathcal{G}$. The sheaf $\mathcal{G}$ will then be compared with the sheaf that corresponds to the trace function $x \mapsto   \chi(x) \overline{\chi}(x+1)$ to show cancellation in both variables. In executing this strategy, we have benefited greatly from the recent works of Fouvry, Kowalski and Michel, which have served to make the theorems of Deligne and Katz on trace functions more amenable to analytic applications. 

\begin{proof} Suppose that $\chi$ and $\psi$ are primitive modulo $p$, and let $\chi_m,\psi_m$ be the characters derived from $\chi,\psi$ by composing with the norm map $N: \F_{p^m} \to \F_p.$ Let 
\begin{equation*} g(\chi_m,\psi_m) = \sum_{x,y \in \F_{p^m}} \chi_m(x) \overline{\chi_m}(x+1)\overline{\chi_m}(y) \chi_m(y+1) \psi_m(xy-1).\end{equation*}
By the Grothendieck-Lefschetz trace formula \cite[Rapport, Thm.\ 3.2]{SGA45} and the Riemann hypothesis of Deligne \cite{DeW2}, we have that there exist algebraic numbers $\alpha_{i,+}$ and $\alpha_{i,-}$ with $|\alpha_{i,+}|=p^{k_i/2}$, $|\alpha_{i,-}|=p^{\ell_i/2}$ with $k_i,\ell_i \in \Z$ such that \begin{equation}\label{eq:RH2dim}g(\chi_m,\psi_m) = - \sum_{i=1}^{N_+} \alpha^m_{i,+} +   \sum_{i=1}^{N_-} \alpha^m_{i,-}.\end{equation}  Results of Adolphson-Sperber or Katz \cite[Thm.\ 12]{KatzBettiNos} show that $N_+, N_- \ll 1$, independently of $\chi,\psi,p$.  Thus, to prove Theorem \ref{thm:gbound} in the case that $\chi,\psi$ are primitive modulo $p$, it suffices to show that $|\alpha_{i,+}|, |\alpha_{i,-}| \leq p.$  

We show that $|\alpha_{i,+}|, |\alpha_{i,-}| \leq p$ using the theory of $\ell$-adic sheaves and trace functions (for background see \cite{SGA45} \cite{KatzGaussSums} \cite{MichelAppliedCoh}). Let $\ell$ be a prime distinct from $p$ and let $\iota: \overline{\Q}_\ell \to \C$ be a fixed isomorphism. If $X$ is an algebraic variety over $\F_p$ then by ``sheaf'' or ``$\ell$-adic sheaf'' we will mean a constructible $\overline{\Q}_\ell$-sheaf on $X$. Note $\ell$ is always assumed distinct from the base field of $X$. If $\mathcal{F}$ is a sheaf on $X$ and $\overline{x} \in X(\overline{\F}_p)$ is a geometric point of $X$, then we write $\mathcal{F}_{\overline{x}}$ for the stalk of $\mathcal{F}$ at $\overline{x}$.  

For any $\ell$-adic sheaf $\cF$ on $X$, its trace function $t_\cF(x)$ is defined to be the value at $x \in X(\F_p)$ of the trace of the Frobenius endomorphism of $\F_p$ acting on $\mathcal{F}_x$. That is $$t_\cF(x) = \iota((\Tr \cF)(\F_p,x)) = \iota(\Tr(\Fr_p | \cF_x)).$$

Let $$ \cF_1 = \cL_{\chi((Y+1)Y^{-1})}$$ be the Kummer sheaf attached to the character $\chi\left( \frac{Y+1}{Y}\right),$ whose trace function is $\overline{\chi}(y)\chi(y+1)$. Thus $\cF_1$ is an $\ell$-adic sheaf on $\mathbb{A}^1$; it is a middle-extension sheaf, pure of weight 0, lisse on $\mathbb{A}^1 - \{0,-1\}$. It is of rank 1, hence geometrically irreducible.
Let $$\cK = \cL_{\psi(XY-1)}$$ be the middle extension of the Kummer sheaf attached to $\psi(XY-1)$ on $\A^1 \times \A^1$.   Let $Z \subset \mathbb{A}^2$ be the closed set defined by the equation $XY = 1$. The sheaf $\cK$ is lisse, of rank 1 and pure of weight 0 on the dense open set $V = \mathbb{A}^2 - Z$.  Since $\psi$ is non-trivial, the middle extension sheaf is identical to the extension by $0$ of $\cK$ restricted to $V$. 

Let $p_j: \A^1 \times \A^1 \to \A^1$, $j=1,2$ be the two canonical projections, and let $$\cH = p_2^*\cF_1 \otimes \cK.$$ The sheaf $\cH$ is lisse on the dense open set $U = \mathbb{A}^2 - D,$ where $D \subset \mathbb{A}^2$ is the divisor
$$D = Z \cup \mathbb{A}^1 \times \{0\} \cup \mathbb{A}^1 \times \{-1\}.$$ 

For $i=0,1,2$ define the $\ell$-adic sheaves $$T^i_\cK (\cF_1) := R^ip_{1,!}(\cH ),$$
where $R^ip_{1,!}$ is the higher direct image with compact supports.  The sheaf $\cG = T^1_\cK (\cF_1)$ is is the ``cohomological transform'' of $\cF_1$ defined by the ``kernel'' $\cK$, in the sense of Katz's affine cohomological transforms and of \cite{FKMcoh}.
\begin{lem}\label{GLtf}
If $\chi$ and $\psi$ are non-trivial Dirichlet characters modulo $p$, then $$t_{\cG}(x) = - \sum_{y \in \F_p} t_{\cF_1}(y) t_\cK(x,y).$$
\end{lem}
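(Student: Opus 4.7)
The plan is a direct application of the proper base change theorem together with the Grothendieck–Lefschetz trace formula, followed by verifying the vanishing of two cohomology groups so that only $H^1_c$ contributes.

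First I would use proper base change for the higher direct image with compact supports along $p_1$: since $p_1:\A^1\times\A^1\to\A^1$ is the projection and $\cG = R^1p_{1,!}(p_2^*\cF_1\otimes\cK)$, at any $\F_p$-point $x$ the stalk at a geometric point $\bar x$ above $x$ satisfies
\begin{equation*}
\cG_{\bar x} \;\cong\; H^1_c\bigl(\A^1_{\overline{\F}_p},\, (p_2^*\cF_1\otimes\cK)\big|_{\{x\}\times\A^1}\bigr)
\;=\; H^1_c\bigl(\A^1_{\overline{\F}_p},\, \cF_1\otimes\cK_x\bigr),
\end{equation*}
where I write $\cK_x := \cK|_{\{x\}\times\A^1}$. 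By the Grothendieck–Lefschetz trace formula on $\A^1_{\F_p}$ applied to the sheaf $\cH_x := \cF_1\otimes\cK_x$,
\begin{equation*}
\sum_{y\in\F_p} t_{\cF_1}(y)\, t_{\cK}(x,y) \;=\; \sum_{y\in\F_p} t_{\cH_x}(y) \;=\; \sum_{i=0}^{2}(-1)^i \Tr\bigl(\Fr_p\,\bigm|\, H^i_c(\A^1_{\overline{\F}_p},\cH_x)\bigr).
\end{equation*}
Thus the lemma reduces to showing $H^0_c(\A^1_{\overline{\F}_p},\cH_x)=0$ and $H^2_c(\A^1_{\overline{\F}_p},\cH_x)=0$ for every $x\in\F_p$, since then the right-hand side collapses to $-\Tr(\Fr_p\mid \cG_{\bar x}) = -t_\cG(x)$.

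The vanishing of $H^0_c$ is standard: since $\cF_1$ (resp. $\cK_x$) is a middle extension of a rank-$1$ lisse sheaf that vanishes at a nonempty finite set of closed points (the zeros/poles of $(X{+}1)/X$, resp. of $xY{-}1$), the sheaf $\cH_x$ is a middle extension from an open subset $U\subsetneq\A^1$ on which it is lisse of rank $1$; a compactly-supported global section of such a sheaf on the affine line would have to be identically zero, whence $H^0_c=0$. For the vanishing of $H^2_c$, by Poincaré duality this is equivalent to the vanishing of $H^0$ of the geometric generic fiber twisted appropriately, which in turn holds provided $\cH_x$ is not geometrically constant. For $x\neq 0$, the sheaf $\cH_x$ has nontrivial Kummer ramification of type $\chi$ at $y=0$ and $y=-1$ and of type $\psi$ at $y=1/x$ (these three points are distinct in $\overline{\F}_p$ as $x\in\F_p$), so it cannot be geometrically constant; for $x=0$, the sheaf $\cK_0$ is constant and $\cH_0$ is a scalar multiple of $\cF_1$, which is nontrivially ramified at $0$ and $-1$, hence still not geometrically constant. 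Combining, $H^0_c=H^2_c=0$ in all cases, and the formula of the lemma follows.

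The only mildly delicate step is bookkeeping the ramification to ensure $H^2_c=0$ at every $x\in\F_p$, including the degenerate point $x=0$; this is where the assumption that both $\chi$ and $\psi$ are nontrivial modulo $p$ is used. Everything else is a routine consequence of proper base change and Grothendieck–Lefschetz.
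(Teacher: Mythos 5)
Your proposal is correct and follows essentially the same route as the paper: identify the stalk of $\cG$ with $H^1_c$ of a fiber via proper base change, apply Grothendieck--Lefschetz, and kill $H^0_c$ and $H^2_c$. The only differences are cosmetic: for $H^0_c$ you argue directly from the extension-by-zero property of $\cH_x$ rather than citing the middle-extension vanishing lemma of Fouvry--Kowalski--Michel and excision as the paper does, and for $H^2_c$ you phrase the criterion via Poincar\'e duality and geometric non-constancy of a rank-one sheaf, whereas the paper uses the coinvariants formula together with Schur's lemma; these are equivalent statements for rank-one lisse sheaves, and your ramification bookkeeping (including the degenerate fiber $x=0$) is a valid way to certify non-constancy.
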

\begin{proof}
Let $U_{x}  = U \cap  \{x\} \times \A^1 $ be the open set on which $\cH$ restricted to $\{x\} \times \A^1$ is lisse. Precisely, we have $U_{x}= \{x\} \times (\A^1 -\{0,-1,1/x\})$. (Below we take restrictions of $p_2^*\cF_1$ and $\cK$ to $\{x\} \times \A^1$ without mention.) 

There are three representations of $\Gal(\Fbar_p/\F_p)$ given by $H_c^i(U_{x,\Fbar_{p}},\cH)$ for $i=0,1,2$. The Grothendieck-Lefschetz trace formula asserts that $$ \sum_{y \in U_{x}(\F_p)} t_{\cH}(x, y) = \Tr(\Fr_p | H_c^0(U_{x,\Fbar_p},\cH))- \Tr(\Fr_p | H_c^1(U_{x,\Fbar_p},\cH))+\Tr(\Fr_p | H_c^2(U_{x,\Fbar_p},\cH)),$$ where $\Fr_p \in \Gal(\Fbar_p/\F_p)$ is the Frobenius automorphism. By standard operations with Galois representations, and the fact that $\cF_1$ and $\cK$ are extension by 0 sheaves, we have that $$ \sum_{y \in \F_p} t_\cF(y) t_\cK(x,y) = \sum_{y \in U_{x}(\F_p)} t_{\cH}(x,y).$$ Furthermore, by the proper base change theorem (see \cite[Arcata, IV, Thm.\ 5.4]{SGA45}) we have that $H_c^i(U_{x,\Fbar_{p}},\cH)$ is naturally isomorphic to the stalk at $x$ of $T^i_\cK(\cF_1)$. Therefore, to prove the lemma, it suffices to show that all of the stalks of $T^0_\cK(\cF_1)$ and $T^2_\cK(\cF_1)$ are 0.

First we show that the stalks of $T^0_\cK(\cF_1)$ are all zero. We claim that $H^0_c(U_{x,\Fbar_p}, \cH)=0$, and so $T^{0}_\cK(\cF_1)=0$ as well. Since $\chi$ is non-trivial, $\cF_1$ is a middle extension sheaf, and so is $p_2^*\cF_1$. Since both $p_2^*\cF_1$ and $\cK$ are middle-extension, we have by e.g.\ \cite[Lem.\ 4.2]{FKMcoh} that $H^0_c(\{x\} \times \A^1 , \cH)=0$.  Let $\pi:(\{x\} \times \A^1) - U_{x}  \to \Spec \overline{\F}_p$ be the structure morphism. The sheaf $R^{-1}\pi_! \cH$ vanishes by definition, so $H^{-1}_c((\{x\} \times \A^1) - U_{x}, \cH)=0$. 
By excision (see \cite[Sommes Trig. $(2.5.1)^*$]{SGA45}) and the vanishing of the above two cohomology groups, we have that $H^0_c(U_{x,\Fbar_p}, \cH)=0$ as well.

Now we show that the stalks of $T^2_{\cK}(\cF_1)$ are all zero. If $\cL_1$ and $\cL_2$ are any two geometrically irreducible sheaves, lisse on $U_{x}$, then $H_c^2(U_{x,\Fbar_p},\cL_1 \otimes \cL_2)\neq 0$ if and only if $\cL_1 \simeq D(\cL_2)$ on a dense open set where both sheaves are lisse, as one can see by the co-invariants formula (see  \cite[(1.4.1)b]{DeW2}) and Schur's Lemma. In our case, it suffices to consider the $G^\text{geom}=\Gal(\overline{\F_p(X)}/\Fbar_p(X))$-invariants acting on the stalk of $p_2^*\cF_1$ and $\cK$ at a lisse geometric point. Since $\chi$ is non-trivial (this is crucial), we have that $\cF_1$ is ramified at $0$ whereas $\cK$ is not. Therefore the inertia group at zero $I_0 \subset G^\text{geom}$ acts non-trivially on the stalk of $\cF_1$ at any lisse point, whereas $I_0$ acts trivially on any stalk of $\cK$. Therefore the two sheaves cannot be geometrically isomorphic, and so the $H_c^2$ vanishes.
\end{proof}
By Lemma \ref{GLtf} and the fact that $\mathcal{F}_1$ is middle extension we have 
\begin{equation}
g(\chi,\psi) = - \sum_{u \in \F_p} \overline{t_{\mathcal{F}_1}(u)}t_\cG(u).
\end{equation}
By the Riemann hypothesis of Deligne \cite[Thm.\ 3.3.1]{DeW2}, $\cG$ is mixed of weights $\leq 1$, so to apply the orthogonality form of the Riemann hypothesis (e.g.\ \cite[Thm.\ 5.2]{MichelAppliedCoh}), we would need to show that the part of weight 1 of $\cG$, say $\cG_0$, is geometrically irreducible and not geometrically isomorphic to $\cF_1$. It is not difficult to see that $\cG$ has generic rank 2, and we would like to argue that this prevents $\cG_0$ from being geometrically isomorphic to $\cF_1$. However, it is less clear that $\cG_0$ itself has generic rank $2$. 

Recall $U = \A^2-D$, and let $j$ be the open embedding of $U$ in $\mathbb{A}^2$. To handle the issue raised in the previous paragraph, let us introduce the modified sheaf 
$$\widetilde{\cH} := j_!(j^* \cH)$$
and the corresponding cohomological transform sheaves $$\widetilde{T}^i_{\cK}(\cF_1) := R^ip_{1,!}(\widetilde{\cH}).$$
We have defined $\widetilde{\cH}$ in order that it satisfy the hypotheses of Deligne's semicontinuity theorem \cite{Laumon}, used in the proof of part (1) of the following Lemma. 
\begin{lem}\label{lem:Gfacts}
Suppose that $\chi$ and $\psi$ are non-trivial modulo $p$. The sheaf $\widetilde{\cG} = \widetilde{T}^1_{\cK}(\cF_1)$
\begin{enumerate}
\item is lisse on the dense open set $W = \A^1 - \{0,-1\}$,
\item is geometrically irreducible and pure of weight $1$ on $W$, and 
\item has generic rank $2$.
\end{enumerate}
\end{lem}
\begin{proof}
\begin{enumerate}
\item   In order to prove that $\widetilde{\cG}$ is lisse on $W$, we will use Deligne's semicontinuity theorem \cite[Cor.\ 2.1.2]{Laumon}. Consider $$\widetilde{p}_1 : \A^1 \times \mathbb{P}^1 \to \A^1,$$ which is a smooth and proper morphism of relative dimension $1$. Abusing notation, we continue to write $\widetilde{\cH}$ for the extension by $0$ of $\widetilde{\cH}$ from $\A^1 \times \A^1$ to $\A^1 \times \mathbb{P}^1$. By definition, we have $\widetilde{\cG} = R^1_{\widetilde{p}_1,*} \widetilde{\cH}$. 

Let $\widetilde{D}$ be the complement in $\A^1 \times \mathbb{P}^1$ of the open set $U$, that is $\widetilde{D} = D \cup \A^1 \times \{\infty\}$. By restriction, $\widetilde{p}_1$ defines a proper smooth morphism of relative dimension $1$ $$ X \to W = \A^1 -\{0,-1\},$$ where $X = \widetilde{p}_1^{-1}(W)$. The intersection $\widetilde{D} \cap X$ is a divisor in $X$, which is flat and finite (of degree 4) over $W$. The sheaf $\widetilde{\cG}$ is lisse on the complement of $\widetilde{D}\cap X$ in $\A^1 \times \mathbb{P}^1$. 

Let $x \in W$. We identify the fiber $C_x$ of $\widetilde{p}_1$ over $x$ with $\mathbb{P}^1$, and so the restriction of $\widetilde{\cH}$ to $C_x$ is identified with a lisse sheaf on the dense open set (abusing notation) $$U_x = \A^1 - \{0,-1,1/x,\infty\} \subset \mathbb{P}^1.$$ The restriction of the sheaf $\widetilde{\cH}$ to $C_x$ is at most tamely unramified everywhere, hence the function $\varphi$ of \cite[Thm.\ 2.1.1]{Laumon} is constant equal to $0$ on points of $W$. Then we have by Corollary 2.1.2 of loc.\ cit.\ that $R^1\widetilde{p}_{1,*}\widetilde{\cH}$ is lisse on $W$. 
\item The sheaf $\widetilde{\cG}$ is mixed of weights $\leq 1$ on $W$ by the Riemann hypothesis of Deligne \cite[Thm.\ 3.3.1]{DeW2}. Furthermore, the part of $\widetilde{\cG}$ of weight $1$ is geometrically irreducible on $W$ by the diophantine criterion for geometric irreducibility. Indeed, $\cF_1$ is not geometrically isomorphic to the Kummer sheaf $\cL_\psi$ attached to $\psi$, since $\cF_1$ is ramified at $-1$ and $\cL_\psi$ is not, and so the hypothesis of \cite[Prop.\ 5.9(2)]{FKMcoh} is satisfied. 

Finally, we prove that $\widetilde{\cG}$ is pure of weight 1 on $W$ by applying \cite[Lem.\ 4.22(2)]{KMS} to the morphism $\widetilde{p}_1:X \to W$ and the sheaf $\widetilde{\cH}$ on $X$. For any $x \in \mathbb{P}^1$, the sheaf pullback $\widetilde{\cH}_x$ to $C_x = \{x\} \times \mathbb{P}^1$ has no punctual section nor trivial subrepresentation (as a lisse sheaf on $U_x$). Thus the assumptions of loc.\ cit.\ are satisfied. 

It follows that for any $x \in W$, the part of weight $<1$ of the stalk at $x$ of $\widetilde{\cG} = R^1 \widetilde{p}_{1,*} \widetilde{\cH}$ is isomorphic to $$\bigoplus_{y \in C_x - U_x} ( \widetilde{\cH}_x)^{I_y}_{\overline{\eta}} / ( \widetilde{\cH}_x)_{\overline{y}}.$$ But we already have $ ( \widetilde{\cH}_x)^{I_y}_{\overline{\eta}}= 0$ at all singular points $y \in \{0,-1,1/x,\infty\},$ so the direct sum vanishes. From Deligne's Theorem, we conclude that $\widetilde{\cG}$ is pure of weight 1 on $W$. 
\item The stalk of $\widetilde{\cG}$ over $x\in \F_p$ is $H^1_c(\{x\} \times \A^1, \widetilde{\cH}).$ By the Euler-Poincar\'e formula  \cite[8.5.2, 8.5.3]{KatzGaussSums}, if $x \neq -1$ then the dimension of this cohomology group is $-1+3=2$ for the 3 tamely ramified points $0,-1,1/x$ of $\widetilde{\cH}$.  Hence the generic rank is $2$.
\end{enumerate}
\end{proof}
To compare $\widetilde{\cH}$ with $\cH$, observe that the stalks of $\widetilde{\cH}$ are equal to those of $\cH$ outside $D$, and are $0$ along $D$. Thus, the stalks of $\cH$ and $\widetilde{\cH}$ may only differ on $\widetilde{D}$ (and this can only happen if $\chi = \overline{\psi}$). Indeed, in the case $\chi= \overline{\psi}$ note that $t_\cH(-1,-1) = \overline{\chi}(-1)^2$, whereas $t_{\widetilde{\cH}}(-1,-1) = 0$. As an aside, one wonders whether $g(\chi,\overline{\chi})$ admits a ``special structure'' for $\chi$ complex that could be used to give a simpler the proof of the bound $g(\chi,\overline{\chi}) \ll p$ in that special case, as was exploited by Conrey and Iwaniec in the case that $\chi$ is quadratic \cite[\S 14]{CI}. 

By the discussion in the preceding paragraph, we have that $t_\cG(u) = t_{\widetilde{\cG}}(u) + O(1)$. Thus, setting  $$ s(\chi,\psi) = - \sum_{\F_p-\{0,-1\}} \chi(x) \overline{\chi}(x+1) \sum_{y} \overline{\chi}(y)\chi(y+1)\psi(xy-1),$$ we have 
 \begin{equation}\label{eq:gtos} g(\chi,\psi) =  -s(\chi,\psi) + O(p)  = - \sum_{x \in W(\F_p )} \overline{t_{\cF_1}(x)} t_{\widetilde{\cG}}(x)+ O(p).\end{equation} 

Since $\cF_1$ is pure of weight 0 and rank 1 on $W$ and $\widetilde{\cG}$ is pure of weight 1 and geometrically irreducible of rank $2$ on $W$ by Lemma \ref{lem:Gfacts}, $\widetilde{\cG}$
 cannot be isomorphic to $\cF_1$. 
Thus, the Grothendieck-Lefschetz trace formula, the co-invariants formula \cite[(1.4.1)b]{DeW2}, and the Riemann hypothesis of Deligne imply that there exist algebraic numbers $\beta_{i,+}$ and $\beta_{i,-}$, with $|\beta_{i,+}|\leq p$, $|\beta_{i,-}|\leq p$  such that \begin{equation}\label{eq:RH1dim}s(\chi_m,\psi_m) = \sum_{x \in W(\F_{p^m})} \overline{t_{\cF_1}(N(x))} t_{\widetilde{\cG}}(N(x)) = - \sum_{i=1}^{M_+} \beta^m_{i,+} +   \sum_{i=1}^{M_-} \beta^m_{i,-}.\end{equation} 

Here it is \emph{not} clear that $M_+$ and $M_-$ are bounded independently of $\chi, \psi, p$. However, we can avoid this issue by appealing to the two-dimensional Riemann hypothesis of Deligne \eqref{eq:RH2dim}, in which situation we know that $N_+,N_-\ll 1$. 
A slight variation of \cite[Lem.\ 13.2]{CI} shows that $|\alpha_{i,+}|, |\alpha_{i,-}| \leq p^{3/2},$ and we would like to show in fact that $\alpha_{i,+}$ and $\alpha_{i,i}$ are bounded by $p$. Suppose not. Then we would have $$ \limsup_{m \to \infty} \frac{ |g(\chi_m,\psi_m)|}{p^{3m/2}} >0.$$ But this is impossible by \eqref{eq:RH1dim} since $|\beta_{i,+}|$, $|\beta_{i,-}| \leq p$. Therefore $|\alpha_{i,+}|, |\alpha_{i,-}| \leq p,$ so by \eqref{eq:RH2dim} and the fact that $N_+,N_-\ll 1$ we have $g(\chi,\psi) \ll p$ for all $\chi,\psi$ primitive. 

If $\psi$ is not primitive, it must be the trivial character $\psi_0$, in which case we have $g(\chi,\psi_0) \ll p$ by Lemma \ref{lemma:LFTprimeSquared}, which completes the proof of Theorem \ref{thm:gbound} when $q=p$. \end{proof}
Remark. A proof of a minor variant of Theorem \ref{thm:gbound} also appears as \cite[Thm.\ 5.7]{FKMcoh}, from which we drew inspiration in giving the proof that appears above. However, our proof departs from that of loc.\ cit.\ in that we have completely avoided the difficult main Theorems 2.3, 2.5, and 5.8 of \cite{FKMcoh} on the behavior of the conductor under cohomological transforms.

\subsection{The case $q=p^2$}
\label{section:gboundpsquared}
This case can be treated by elementary means.  Since $\chi$ is a Dirichlet character modulo $p^2$, the function $t \mapsto \chi(1+pt)$ is an additive character on $\mz/p\mz$, so there exists an integer $\ell_{\chi}$ so that
\begin{equation*}
 \chi(1+pt) = e_p(\ell_{\chi} t).
\end{equation*}
Note that $\chi$ has conductor $p^2$ if and only if $(\ell_{\chi}, p) = 1$.
Hence if $a,b$ are integers with $(a,p) = 1$, then
\begin{equation}
\label{eq:chiformulaapluspb}
\chi(a + p^{} b) = \chi(a) \chi(1 + p^{} \overline{a} b)
= \chi(a) e_p(\ell_{\chi} \overline{a} b),
\end{equation}
where $\overline{a} \in \Z$ satisfies $a \overline{a} \equiv 1 \pmod{p^2}$.
Now, for each $t,u \pmod q$ choose $a,b,c,d \in \Z$ such that $a + p^{} b \equiv t \pmod q$ and $c + p^{} d \equiv u \pmod q$. 
We have 
\begin{equation*}
\psi(ut-1) = \psi(-1 + a  c + p^{ }(bc + ad))
= \psi(-1 + a   c) e_p(\ell_{\psi}(bc+ad) \overline{(-1+ac)}).
\end{equation*}
Note that as $t,u$ run through $\Z/q\Z$, each of the integers $a,b,c,d$ represent every residue class modulo $p$.  We obtain
\begin{multline}
\label{eq:gformula1}
g(\chi, \psi)
= \sum_{a, c}
\chi(a) \overline{\chi}(a+1)
\overline{\chi}(c) \chi(c+1)
\psi(-1 + ac)
\\
 \sum_{b,d }
 e_p(\ell_{\chi} b( \overline{a} - \overline{(a+1)}) + \ell_{\psi} bc \overline{(-1+ac)})
 e_p(-\ell_{\chi} d( \overline{c} - \overline{(c+1)}) + \ell_{\psi}ad \overline{(-1+ac)}).
\end{multline}
In particular, we have $(a(a+1) c(c+1)(ac-1), p) = 1$ for every non-zero  term of the sum over $a$ and $c$, so all inversions modulo $p$ here and below are justified. 
The sum over $b$ equals $p$ provided
\begin{equation}\label{Lcong1}
\ell_{\chi}(\overline{a} - \overline{(a+1)}) \equiv - \ell_{\psi} c \overline{(-1+ac)} \pmod{p},
\end{equation}
and vanishes otherwise.  Similarly, 
the sum over $d$ equals $p$ provided
\begin{equation}\label{Lcong2}
\ell_{\chi}(\overline{c} - \overline{(c+1)}) \equiv  \ell_{\psi} a \overline{(-1+ac)} \pmod{p},
\end{equation}
and vanishes otherwise. 
We claim that there at most $2$ solutions to \eqref{Lcong1} and \eqref{Lcong2}, whence
\begin{equation*}
 |g(\chi, \psi)| \leq 2 q,
\end{equation*}
for $q=p^2$.  Along the way, we will also see that $g(\chi, \psi) = 0$ if $\psi$ is not primitive.

Indeed, multiplying the first congruence by $a(a+1)$ and the second one by $c(c+1)$, we obtain the equivalent system
\begin{equation*}
\ell_{\chi} \equiv - \ell_{\psi} ac(a+1) \overline{(-1+ac)} \pmod{p^{ }}, \qquad \ell_{\chi} \equiv \ell_{\psi} a c(c+1) \overline{(-1+ac)} \pmod{p^{ }}.
\end{equation*}
Since $(\ell_{\chi}, p) = 1$, this implies that $g(\chi, \psi) = 0$ unless $(\ell_{\psi}, p) = 1$, which means $\psi$ is primitive.
Furthermore, we deduce that $a(a+1) c \equiv - ac(c+1) \pmod{p^{ }}$, whence 
$c \equiv -2 - a \pmod{p^{ }}$, which uniquely determines $c$ in terms of $a$.  Then we see that $a$ must satisfy
\begin{equation}\label{eq:a}
 a(a+1)(a+2) \overline{(-1+a(-2-a))} \equiv  \overline{\ell_{\psi}} \ell_{\chi} \pmod{p^{ }}.
\end{equation}
Setting $A = \overline{\ell_{\psi}} \ell_{\chi}$, \eqref{eq:a} is equivalent to  
\begin{equation*}
 a(a+2) \equiv  -A  (a+1) \pmod{p^{}}. 
\end{equation*}
Hence $a$ satisfies a certain monic quadratic polynomial, having at most $2$ solutions modulo $p$. 
This gives the desired bound on $g$, completing the proof of Theorem \ref{thm:gbound}.

\section{Archimedean aspects, part 1}
\label{section:WeightFunctions1}

In this section, we derive the analytic properties of the weight functions $J^\pm_0$ and $J^\pm$ defined in \eqref{eq:Jpm0def} and \eqref{eq:Jpmdef}.

\subsection{Approximate functional equations}
Recall from \eqref{eq:Vjdef} the functions $V_j(y,t)$.
\begin{mylemma}
\label{lemma:Vproperties}
For each $j=1,2$, $V_j(y,t)$ is an entire, even function in $t$, for any given $y>0$.  Moreover, for $t \in \R$ it satisfies the bound
\begin{equation}
\label{eq:Visalmostinert}
y^k (1/2 + it)^{\ell} \frac{\partial^{k+\ell}}{\partial y^k \partial t^{\ell}} V_j(y,t) \ll_{A,k,\ell} \Big(1+\frac{y}{1+|t|^j}\Big)^{-A},
\end{equation}
for any $A>0$.  
For $t = -i/2 + v$ with $v \in \mr$, we have for any $A>0$
\begin{equation}
\label{eq:VboundShiftedLine}
y^k \frac{\partial^{k}}{\partial y^k} V_j(y,-\tfrac{i}{2} + v) \ll_{A,k} \Big(1+\frac{y}{1+|v|^j}\Big)^{-A}.
\end{equation}
\end{mylemma}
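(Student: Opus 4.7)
The plan is to unwind the Mellin-Barnes definition \eqref{eq:Vjdef} and read off all claimed properties from contour shifts combined with Stirling's formula, exploiting the Gaussian damper $G_j(s)$.

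First, evenness in $t$ is immediate from inspection: swapping $t \mapsto -t$ merely permutes the two gamma factors in both the numerator and the denominator. To establish entirety in $t$, I would fix a compact set $K \subset \mathbb{C}$ and observe that the integrand, as a meromorphic function of $s$, has its poles in $s$ coming only from the numerator gamma factors at points $s = -\tfrac12 - \delta - 2n \mp it$. For $t \in K$ these poles lie in a uniformly bounded region, so choosing the contour $\mathrm{Re}(s) = \sigma$ sufficiently far to the right of $K$ keeps the integrand holomorphic jointly in $(s,t)$ on the contour, while $|G_j(s)| = e^{2j(\sigma^2 - \mathrm{Im}(s)^2)}$ ensures uniform absolute convergence. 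Differentiating under the integral sign then shows $V_j(y,\cdot)$ is holomorphic on $K$, and since $K$ was arbitrary, $V_j(y,\cdot)$ is entire.

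The decay bound \eqref{eq:Visalmostinert} (case $k=\ell=0$) follows the standard recipe. Shift the contour to $\mathrm{Re}(s) = A > 0$: Stirling applied to $\log \Gamma_{\mathbb{R}}(\tfrac12+\delta+s\pm it) - \log\Gamma_{\mathbb{R}}(\tfrac12+\delta\pm it)$ yields $\tfrac{s}{2}\log(\tfrac12+\delta\pm it) + O(s^2/(1+|t|))$ uniformly for $|s| \ll 1+|t|$, so the whole gamma ratio is bounded by $(1+|t|)^{jA}$ times a mild polynomial in $|\mathrm{Im}(s)|$, which is swallowed by the Gaussian. The resulting estimate is $V_j(y,t) \ll_A y^{-A}(1+|t|^j)^A$. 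To get the competing bound $V_j(y,t) \ll 1$ valid when $y$ is small compared to $1+|t|^j$, instead shift slightly past $s=0$ (picking up the residue $1$ there) and bound the shifted integral trivially. Combining the two regimes gives the claimed inert-type estimate.

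For the derivative bounds, differentiating $k$ times in $y$ introduces $(-s)(-s-1)\cdots(-s-k+1)/y^k$ in the integrand, and $y^k \partial_y^k$ is therefore controlled by the Gaussian on the same shifted contour. Differentiating $\ell$ times in $t$ brings down (after logarithmic differentiation of the gamma ratio) an $\ell$-th degree polynomial in digamma differences of the form $\psi_{\mathbb R}(\tfrac12+\delta+s\pm it) - \psi_{\mathbb R}(\tfrac12+\delta\pm it)$, each of which is $O(s/(1+|t|))$ by $\psi'(z) = O(1/|z|)$; thus each $t$-derivative costs a factor $(1+|t|)^{-1}$, which matches the $(1/2+it)^\ell$ prefactor in \eqref{eq:Visalmostinert}. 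The bound \eqref{eq:VboundShiftedLine} on the line $t=-i/2+v$ is established by exactly the same argument, after noting that the gamma shifts $1/2+\delta \pm it$ become $1+\delta+iv$ and $\delta+iv$, for which the Stirling input is unchanged in quality (and the possible zero from $1/\Gamma_{\mathbb{R}}(iv)^j$ at $v=0$ in the $\delta=0$ case only helps).

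The only point that requires genuine care rather than bookkeeping is the uniformity of Stirling, i.e., justifying $\log\Gamma_{\mathbb{R}}(\tfrac12+\delta+s\pm it)/\log\Gamma_{\mathbb{R}}(\tfrac12+\delta\pm it) = O(|t|^{j\sigma/2})$ \emph{uniformly} in $\mathrm{Im}(s)$, so that the Gaussian $G_j$ really does provide the needed convergence without dragging in $|t|$-dependent losses. This is done by splitting the range $|\mathrm{Im}(s)| \leq \tfrac12 (1+|t|)$ (where classical Stirling applies cleanly) from the tail $|\mathrm{Im}(s)| > \tfrac12(1+|t|)$ (where the Gaussian dominates any Stirling bound), and is the only step where one must resist the temptation to be cavalier.
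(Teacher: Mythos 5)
Your overall strategy (contour shifts, Stirling, Gaussian damping, residue bookkeeping) is the same as the paper's, and the treatment of entirety, evenness, the $k=\ell=0$ bound for real $t$, and the $y$- and $t$-derivatives is correct and adequately detailed. In particular your observation that each $t$-derivative brings digamma differences of size $O(|s|/(1+|t|))$, which after multiplying by $(1/2+it)^\ell$ leaves a polynomial in $s$ absorbed by $G_j$, is exactly the right mechanism.

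There is, however, a genuine oversight in your treatment of the shifted line $t = -\tfrac{i}{2}+v$. You claim this case is handled ``by exactly the same argument,'' but it is not: when $\delta=0$, the substitution $it = \tfrac12 + iv$ turns the numerator factor $\Gamma_{\mr}(\tfrac12+\delta+s-it)^j$ into $\Gamma_{\mr}(s-iv)^j$ (note the sign: $\delta - iv$, not $\delta+iv$ as you wrote), whose leading pole sits at $s = iv$, i.e.\ on the line $\real(s)=0$. When you shift the contour to $\real(s)=-1/4$ for the small-$y$ regime, you now cross this pole, which does not happen when $t\in\R$ (there the numerator poles all have $\real(s)\le -1/2$). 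This new residue must be estimated, and that is precisely where the factor $1/\Gamma_{\mr}(-iv)^j$ enters: the residue carries a $\tfrac{1}{iv}$ from the $\tfrac{1}{s}$ factor, and it is the simple (resp.\ $j$-fold) zero of $1/\Gamma_{\mr}(-iv)^j$ at $v=0$ together with the Gaussian decay of $G_j(iv)$ that makes the residue uniformly $O((1+|v|)^{-A})$. Your parenthetical remark about $1/\Gamma_{\mr}(iv)^j$ only ``helping'' shows you have the right ingredient in hand, but you have not noticed that it is needed to control a new residue rather than merely improving an existing bound. Fixing this requires explicitly writing the residue at $s=iv$ and checking that the apparent singularity from $1/(iv)$ cancels against $1/\Gamma_{\mr}(-iv)^j$; with that added, your argument matches the paper's.

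Two smaller nits: the gamma shifts become $1+\delta+iv$ and $\delta-iv$, not $\delta+iv$; and the uniform-Stirling split you describe at the end (bulk $|\imag(s)|\ll 1+|t|$ where Stirling is clean versus the Gaussian-dominated tail) is indeed the point the paper makes via the dichotomy ``$|\imag(s)|\le (1+|t|)^{1/4}$'' versus the crude bound with $\exp(-\imag(s)^2/2)$, so that part of your proposal is sound.
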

\begin{proof}
By shifting the contour far to the right, we see that $V_j(y,t)$ is analytic for $t$ in any fixed horizontal strip, so it can be extended as an entire function of $t$.  It is clearly invariant under $t \rightarrow -t$.

Now assume $t \in \R$. First we show \eqref{eq:Visalmostinert} in the case $k=\ell=0$.  
We assume $\delta =0$ for notational simplicity, the $\delta =1$ case being similar.
Stirling's asymptotic expansion gives that $\log \Gamma(z) = (z-\frac12) \log(z) - z + \sum_{j=0}^{N} c_j z^{-j} + O(|z|^{-N-1})$, for certain constants $c_j$, valid for $\text{Re}(z) \geq 1/100$, say.  From this we deduce that if $|a|^2 \leq |z|$, $\text{Re}(z) \geq 1/4$, 
then
\begin{equation}
\label{eq:StirlingRatio}
 \log \frac{\Gamma(z+a)}{\Gamma(z)} = a \log{z} + \sum_{j=1}^{N} \frac{P_j(a)}{z^j} + O\Big(\frac{(1+|a|)^{2N+2}}{|z|^{N+1}}\Big),
\end{equation}
for certain polynomials $P_j$ of degree at most $2j$.
Fix $\sigma \in \mr$ so that $1/2   + \sigma > 0$.  
Then for $\real(s) = \sigma$ and $|\text{Im}(s)|  \leq (1+|t|)^{1/4}$, we derive from \eqref{eq:StirlingRatio} that
\begin{multline}
\label{eq:StirlingAsymptoticExpansion}
 \exp(s^2) \frac{\Gamma_{\mr}(1/2+s+it)}{\Gamma_{\mr}(1/2+it)} 
 \\
 = \Big(\frac{|t|}{\pi}\Big)^{s/2} \exp(s^2) \Big(
  1 + \sum_{j=1}^{N} \frac{P_j(s)}{(1/2+it)^j} 
  + O_{\sigma, N}((1+|t|)^{-\frac{N+1}{2}})\Big),
\end{multline}
provided $t$ is sufficiently large, and where $P_j$ is a different collection of polynomials of degree $\leq 2j$.
If $|\text{Im}(s)| > (1+|t|)^{1/4}$, then a crude application of Stirling gives 
\begin{equation*}
 \exp(s^2) \frac{\Gamma_{\mr}(1/2+s+it)}{\Gamma_{\mr}(1/2+it)} 
 =
   O( (1+|t|)^{\sigma/2} \exp(-\text{Im}(s)^2/2)).
\end{equation*}
In any event, we shift the contour to $\real(s) = A$ to see that $V_j(y,t) \ll_{A} (1+|t|^j)^A y^{-A}$ for $y > 1+|t|^j$.  If $y \leq 1+|t|^j$ we instead move the contour to $\sigma = -1/4$, say.  Accounting for the pole and bounding the integral on the new line, we obtain $V_j(y,t) \ll 1$ in this case.

Next we consider derivatives.  Note that differentiation $k$ times with respect to $y$ followed by multiplication by $y^k$ gives an integral of the form \eqref{eq:Vjdef} back, but with $G_j(s)$ multiplied by a polynomial in $s$.  The exponential decay of $G_j(s)$ easily accomodates for this, showing \eqref{eq:Visalmostinert} for $\ell=0$, and any $k \geq 0$.
Differentiation of Stirling's formula with respect to $t$ leads to \eqref{eq:Visalmostinert} for any $k, \ell$.

Next consider the case $t= -i/2 + v$ with $v \in \mr$, so $it = 1/2 + iv$.  For $y > 1 + |t|^j$ we move the contour far to the right and bound it the same way.  For $y \leq 1 + |t|^j$, we shift left, to $-1/4$ again.  We pass poles at $s=0$ (as before) giving a residue of $1$, and at $s=-1/2+iy = iv$.  This latter residue is $O((1+|v|)^{-100})$, i.e. uniformly bounded for $v \in \mr$, using that the apparent pole of $\frac{1}{iv}$ at $v=0$ is cancelled by a zero of $1/\Gamma_{\mr}(-iv)$.  It is not hard to see that \eqref{eq:VboundShiftedLine} holds.  
\end{proof}

\subsection{Properties of $J^+$}
\label{section:Jplus}
We invite the reader to recall the definition of inert functions from Section \ref{section:definitionsandconventions}. 

\begin{mylemma}
\label{lemma:Jpluspropertiesxsmall} 
 Let $J^{+}(x, \cdot)$ be defined as in \eqref{eq:Jpmdef}.  Then
 \begin{equation}
 \label{eq:Jplusbound}
 \frac{\partial^{k} }{\partial x^k } J^{+}(x, \cdot)  \ll_{k} x^{} (x^{-k} + x^{k}) T^{k+1},
 \end{equation}
 and $J^+(x, \cdot)$ is $1$-inert with respect to the variables $t_1, t_2, t_3, c, m_1, m_2, m_3$.
\end{mylemma}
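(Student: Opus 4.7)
The claim has two parts, which I treat separately. For the inertness, among the variables $c, m_1, m_2, m_3, t_1, t_2, t_3$, the first four enter $J^+$ only via the prefactor $w(\cdot)$, which is $1$-inert by hypothesis; the result in those variables then follows by Leibniz. The variables $t_1, t_2, t_3$ (the $n_j$'s) appear additionally in the argument of $h(t, t_1/q, t_2 t_3 d^2/q^2)$ inside the integrand. By Lemma~\ref{lemma:Vproperties}, each $V_j(y, t)$ satisfies $y^a \partial_y^a V_j \ll_a 1$ uniformly in $y > 0$ and $t \in \R$, and since $y_1 = t_1/q$ is linear in $t_1$ while $y_2 = t_2 t_3 d^2/q^2$ is a monomial in $(t_2, t_3)$, the chain rule yields that $V_1(t_1/q, t)$ is $1$-inert in $t_1$ and $V_2(t_2 t_3 d^2/q^2, t)$ is $1$-inert jointly in $(t_2, t_3)$. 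Combining with the $1$-inertness of $w$ completes the inertness assertion.

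For the pointwise bound on $\partial_x^k J^+$, I iterate the Bessel identity $xJ_\nu'(x) = \nu J_\nu(x) - x J_{\nu+1}(x)$ to express $\partial_x^k J_{2it}(x)$ as a finite linear combination, with polynomial-in-$(2it)$ coefficients of degree $\leq k$, of terms $x^{-\ell} J_{2it+j}(x)$ with $\ell, j \leq k$. Substituting into \eqref{eq:Jpmdef} and pulling the $x$-dependent factors outside the $t$-integral reduces the claim to bounding
\begin{equation*}
\int_\R \frac{J_{2it+j}(x)}{\cosh(\pi t)}\, t\, p(2it)\, h(t, \cdot)\, dt \ll x(1 + x^{2k})\, T^{k+1}
\end{equation*}
for each fixed $j \leq k$ and polynomial $p$ of degree $\leq k$. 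For $x \geq 1$ this follows at once from $|J_{2it+j}(x)| \ll x^{-1/2}$, the $L^\infty$-size of $t p(2it) h(t, \cdot)$, and the Gaussian decay of $h$ that confines the $t$-integration effectively to $|t| \ll T(\log T)^{1/2}$.

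The small-$x$ regime is the delicate step and the main obstacle. The key input is Bessel's equation, which for $\nu = 2it$ reads
\begin{equation*}
4\bigl(t^2 + \tfrac{1}{4}\bigr) J_{2it}(x) = (1 - x^2) J_{2it}(x) - x^2 J_{2it}''(x) - x J_{2it}'(x),
\end{equation*}
and lets one trade the $(t^2+1/4)$ factor built into $h$ (cf.~\eqref{eq:hdef}) for differential operators in $x$ acting on $J_{2it}$. The $xJ_{2it}'$ and $x^2 J_{2it}''$ contributions immediately furnish the needed factor of $x$; the residual $(1-x^2) J_{2it}(x)$ piece is treated via a Mellin--Barnes shift (equivalently, by repeated integration by parts in $t$ exploiting the rapid oscillation of $(x/2)^{2it}$ against a smooth test function of bandwidth $T$) to extract the missing factor. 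This cancellation is essential: the trivial estimate $|J_{2it}(x)/\cosh(\pi t)| \ll |t|^{-1/2}$ yields only $O(T^{3/2})$, independent of $x$, so the $(t^2+1/4)$ structure of $h$ cannot be discarded. This type of manipulation of Kuznetsov-type Bessel transforms is by now standard, and I would quote the corresponding estimate from \cite{YoungCubic} rather than reproduce the full calculation here.
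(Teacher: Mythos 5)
Your inertness argument is correct and matches the paper's, and the Bessel recursion $J_\nu' = \tfrac{\nu}{x}J_\nu - J_{\nu+1}$ that you iterate for the $x$-derivatives is the same one the paper uses (see \eqref{eq:JrecursiveDerivatives}). The problem lies in your treatment of the small-$x$ regime, which you correctly identify as the crux but then resolve incorrectly. After applying Bessel's equation, the terms $x J_{2it}'(x)$ and $x^2 J_{2it}''(x)$ do \emph{not} carry a factor of $x$: from the power series $J_\nu(x) = (x/2)^\nu/\Gamma(\nu+1)\bigl(1 + O(x^2)\bigr)$ one computes $xJ_\nu'(x) = \nu J_\nu(x) + O(x^2 J_\nu(x))$ and $x^2 J_\nu''(x) = \nu(\nu-1)J_\nu(x) + O(x^2 J_\nu(x))$, so with $\nu = 2it$ purely imaginary all three of $J_{2it}(x)$, $xJ_{2it}'(x)$, $x^2J_{2it}''(x)$ have magnitude $\asymp 1/|\Gamma(1+2it)|$ as $x\to 0$. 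None of them is small, and your identification of which pieces are ``immediate'' is backwards. The fallback you sketch -- repeated integration by parts against $(x/2)^{2it}$ -- saves only $\asymp (T|\log x|)^{-1}$ per step and does not produce a clean factor of $x$ after any fixed number of iterations.

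The paper's mechanism is simpler and is the step your proof is missing: shift the $t$-contour in \eqref{eq:Jpmdef} to $\imag(t) = -1/2$. The factor $(t^2 + \tfrac14)$ built into $h$ in \eqref{eq:hdef} vanishes precisely at $t = -i/2$, which is where $1/\cosh(\pi t)$ has its nearest pole, so the shift is free of residues. On the shifted contour the Bessel index becomes $1 + 2iy$, whose real part $1$ is exactly what produces the factor $(x/2)^1$; quantitatively this is the bound \eqref{eq:JBesselBoundTrivial}, derived from the series representation and Stirling. Combined with $|\cosh(\pi(y - i/2))| = |\sinh(\pi y)|$ (whose zero at $y = 0$ is cancelled by the zero of $t^2 + \tfrac14$) and the shifted-line bound \eqref{eq:VboundShiftedLine} for $V_j$, the resulting $y$-integral gives $J^+(x,\cdot) \ll xT$. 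The derivatives follow from the same shift applied after using \eqref{eq:JrecursiveDerivatives}. Replacing your Bessel-equation detour with this single contour shift both fixes the error and shortens the argument.
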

We will use this for the relatively small values $x \ll T^{2+\varepsilon}$. 
In the complementary range, we have the following.
\begin{mylemma}
\label{lemma:JplusAsymptoticExpansion}  
 Suppose for some $\eps>0$ that $1 \leq T^{2+\varepsilon} \ll x$.  Then for any $A>0$
 \begin{equation*}
  J^+(x, \cdot) = \sum_{\pm} T^2 x^{-1/2} e^{\pm ix} W_{\pm}(x, \cdot) + O_{\eps, A}(x^{-A}),
 \end{equation*}
 where $W_{\pm}(x,\cdot)$ is a function (depending additionally on $\varepsilon$, $T$, $A$) satisfying $x^k \frac{\partial^k}{\partial x^k} W_{\pm}(x, \cdot) \ll 1$.  Moreover, $W_{\pm}(x,\cdot)$ is $1$-inert with respect to the variables $t_1, t_2, t_3, m_1, m_2, m_3, c$.
\end{mylemma}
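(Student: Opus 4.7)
The plan is to substitute the large-$x$ uniform asymptotic expansion of $J_{2it}(x)$ into \eqref{eq:Jpmdef}, collect the oscillatory factors $e^{\pm ix}$, and read off $W_\pm$ as the remaining $t$-integral.

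First I would recall (or quote from \cite{YoungCubic}) the uniform expansion
\[
\frac{J_{2it}(x)}{\cosh(\pi t)} = \frac{1}{\sqrt{x}}\bigl(e^{ix} U_+(x,t) + e^{-ix} U_-(x,t)\bigr) + O_{A}\bigl((1+|t|)^{B} x^{-A}\bigr),
\]
valid whenever $x \gg (1+|t|)^{2+\varepsilon}$, where each $U_\pm(x,t)$ admits an asymptotic expansion in powers of $x^{-1}$ with coefficients polynomial in $t$, and in particular satisfies $x^k \partial_x^k U_\pm(x,t) \ll_{k} (1+|t|)^{O(k)}$. The Gaussian factor $\exp(-(t/T)^2)$ appearing in $h$ restricts attention to $|t| \ll T(\log x)^{1/2}$ up to negligible tails, and on this range the hypothesis $T^{2+\varepsilon} \ll x$ places us uniformly in the regime where the expansion above is valid.

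Substituting into \eqref{eq:Jpmdef} and defining
\[
W_\pm(x,\cdot) = T^{-2}\, w(\cdot) \int_{\mathbb{R}} U_\pm(x,t)\, t\, h\Bigl(t,\tfrac{n_1}{q},\tfrac{n_2 n_3 d^2}{q^2}\Bigr)\, dt,
\]
I obtain $J^+(x,\cdot) = \sum_\pm T^2 x^{-1/2} e^{\pm ix} W_\pm(x,\cdot) + O_A(x^{-A})$, the error being absorbed by choosing the implied $A$ large enough. The change of variables $t = Ts$ exhibits the integral as $T^2$ times a bounded quantity (using that $|V_j(y_j,t)| \ll 1$ on the dyadic support by Lemma \ref{lemma:Vproperties}), giving $W_\pm = O(1)$, and the bound $x^k \partial_x^k W_\pm \ll 1$ descends directly from the corresponding bound on $U_\pm$ together with the rapid Gaussian decay.

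The main obstacle, and really the only piece requiring care, is verifying the $1$-inert property in all seven variables $t_1,t_2,t_3,c,m_1,m_2,m_3$ simultaneously. For this I would differentiate $W_\pm$ under the integral sign: derivatives on $w(\cdot)$ stay controlled since $w$ is $1$-inert by hypothesis, derivatives on $V_j(y_j,t)$ are controlled via Lemma \ref{lemma:Vproperties} (the factor $(1+y_j/(1+|t|^j))^{-A}$ is harmless after scaling $y_j$ to its dyadic size, since $|t| \ll T$), and no hidden powers of $T$ or $x$ are produced. Assembling these observations yields $1$-inertness and completes the proof.
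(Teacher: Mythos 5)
Your route is genuinely different from the paper's. You substitute a uniform large-$x$ asymptotic expansion of $J_{2it}(x)$ \emph{before} performing the $t$-integral, and then read off $W_\pm$ as the remaining $t$-integral against $U_\pm$; the paper instead uses the integral representation \cite[8.411.11]{GR} to absorb the $t$-integral into a Schwartz-class kernel $g(v,\cdot)$ with $g^{(j)}(v,\cdot)\ll T^j(1+|v|)^{-A}$, and only \emph{then} applies stationary phase in $v$ (truncating to $v\ll x^{-1/2+\varepsilon}$, Taylor expanding $e^{ix(\cosh v-1)}$ around $e^{ixv^2/2}$ and $g$ around $0$, and reducing to explicit Gaussian integrals). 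The paper's ordering is cleaner precisely because it keeps the hard uniformity-in-$t$ bookkeeping localized to the elementary function $g$, rather than to a statement about Bessel functions.

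That said, your proof as written has a gap. You assert $x^k\partial_x^k U_\pm(x,t)\ll_k (1+|t|)^{O(k)}$, and then claim the bound $x^k\partial_x^k W_\pm\ll 1$ ``descends directly.'' It does not. After the $t$-integration, the Gaussian restricts $|t|\ll T(\log x)^{1/2}$, so a factor $(1+|t|)^{O(k)}$ contributes $T^{O(k)}$ to $W_\pm$, and $T^{O(k)}$ need not be $\ll 1$ (even with $T^{2+\varepsilon}\ll x$, one has only $T\ll x^{1/(2+\varepsilon)}$, which does not bound $T^{O(k)}$). What actually makes the argument work is the $\varepsilon$-saving: in the regime $x\gg |t|^{2+\varepsilon}$, the $j$-th coefficient in the asymptotic expansion of $U_\pm$ is of size $\asymp |t|^{2j}/x^j\ll |t|^{-\varepsilon j}$, so in fact the stronger \emph{uniform} bound $x^k\partial_x^k U_\pm(x,t)\ll_k 1$ holds on this range. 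You need to state and use that uniform bound, not the crude $(1+|t|)^{O(k)}$; otherwise the step from $U_\pm$ to $W_\pm$ is not justified. Additionally, the uniform asymptotic expansion you invoke, while standard in spirit, is not proved in the paper nor in \cite{YoungCubic} in quite the form you need (the test function there is localized near $|t|\asymp T$ rather than Gaussian at the origin), so you should either give a precise reference or prove it — and the cleanest way to prove it is essentially to repeat the paper's integral-representation argument.
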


\begin{proof}[Proof of Lemma \ref{lemma:Jpluspropertiesxsmall}]
 First consider the case $k=0$.  In \eqref{eq:Jpmdef} we shift the contour to the line $\imag(t)=-1/2$.  Then from \eqref{eq:VboundShiftedLine}, and using $ |\cosh(-\tfrac{\pi i}{2} + \pi y)| = |\sinh(\pi y)|$, we have
 \begin{equation*}
  |J^+(x, \cdot)| \ll \intR \frac{|J_{1+2iy}(x)|}{|\sinh(\pi y)|} \frac{|y|(1+y^2)}{T^2} \exp(-(y/T)^2) dy.
 \end{equation*}
Next we claim that for any integer $a \geq 0$ we have
\begin{equation}
\label{eq:JBesselBoundTrivial}
  \frac{|J_{1+a+2iy}(x)|}{|\sinh( \pi y)|} \ll \frac{1+|y|}{|y|} \frac{(x/2)^{1+a}}{|1/2+2iy|^{a+1}}.
\end{equation}
 This bound can be derived with a little work from \cite[8.411.4]{GR} and Stirling's approximation.
Taking $a=0$, this implies \eqref{eq:Jplusbound} for $k = 0$.

We next extend this to $k \geq 1$.
 By \cite[8.472.2, 8.486.13]{GR} we have
 \begin{equation}
 \label{eq:JBesselRecursionFormula}
  \frac{d}{dx} Z_{\nu}(x) = \frac{\nu}{x} Z_{\nu}(x) - Z_{\nu+1}(x),
 \end{equation}
 valid for $Z_{\nu} = J_{\nu}$ as well as $Z_{\nu} = K_{\nu}$.
Iterating this, we derive
\begin{equation}
\label{eq:JrecursiveDerivatives}
 \frac{d^k}{dx^k} J_{\nu}(x) = \sum_{m=0}^{k} \frac{P_{k,m}(\nu)}{x^{m}} J_{\nu+k-m}(x),
\end{equation}
where $P_{k,m}$ is a polynomial of degree at most $m$.
By \eqref{eq:JrecursiveDerivatives} and \eqref{eq:JBesselBoundTrivial}, we deduce that
\begin{equation*}
  \frac{|\frac{d^k}{dx^k} J_{1+2iy}(x)|}{|\sinh( \pi y)|}
  \ll_k \frac{1+|y|}{|y|} \sum_{m=0}^{k} 
  \frac{(1+|y|)^{m}}{x^m}
  \frac{x^{1+k-m}}{(1+|y|)^{1+k-m}} 
  \ll \frac{x}{|y|} \Big(\frac{x^k}{(1+|y|)^k} + \frac{(1+|y|)^k}{x^k}\Big).
\end{equation*}
 It is then straightforward to derive \eqref{eq:Jplusbound} for all $k$.
 
 The final statement of the lemma, that $J^+(x,\cdot)$ is $1$-inert with respect to the other variables, follows from Lemma \ref{lemma:Vproperties}, since the only dependence of $J^+$ on these auxiliary parameters is via the factors $V_1(y_1, t) V_2(y_2, t)$ and the inert function $w$.
 \end{proof}

\begin{proof}[Proof of Lemma \ref{lemma:JplusAsymptoticExpansion}]

By \cite[8.411.11]{GR} and an interchange of orders of integration justified by integration by parts and Fubini, there exists an integral representation in the form
 \begin{equation*}
  J^{+}(x, \cdot) =  w(\cdot) T^2 \int_0^{\infty} \cos(x \cosh(v)) g(v, \cdot) dv,
 \end{equation*}
 where 
 \begin{equation*}
 g(v, \cdot) = T^{-2} \intR t \tanh(\pi t) \frac{t^2+\frac14}{T^2} \exp(-(t/T)^2) \cos(2tv)V_1(\cdot,t)V_2(\cdot,t)\, dt.  
 \end{equation*}
 Here $g$ is a Schwartz-class function, more precisely it satisfies the bounds
 \begin{equation}
 \label{eq:gbound}
  g^{(j)}(v, \cdot) \ll_{A,j} T^{j} (1+|v|)^{-A},
  \qquad \text{for all $A>0$, $j \geq 0$},
 \end{equation}
 and is $1$-inert with respect to the other variables by Lemma \ref{lemma:Vproperties}.  
 Hence
 \begin{equation*}
  J^{+}(x, \cdot) = \sum_{\pm }  T^2 \int_0^{\infty} e^{\pm ix \cosh(v)} g(v, \cdot) dv =  \sum_{\pm} T^2 e^{\pm ix} F_{\pm}(x, \cdot),
 \end{equation*}
 where
 \begin{equation*}
  F_{\pm}(x, \cdot) = \int_0^{\infty} e^{\pm ix(\cosh v -1)}   g(v, \cdot) dv.
 \end{equation*}
 It suffices to show that $F_\pm(x,\cdot) = \frac{1}{x^{1/2}} W_\pm(x, \cdot) + O_A(x^{-A})$ with $W_\pm(x, \cdot)$ satisfying the required derivative bounds. For notational simplicity, we consider only the $+$ case, which we write as $F(x, \cdot)$ for $F_{+}(x, \cdot)$.

Write a smooth partition of unity of the form
$$1 = f_0(v/U) + \sum_{V} f_1(v/V) + f_2(v)
\qquad \text{for } v > 0,
$$
with the following properties: $f_0$ has support on $[-1,1]$, $f_1$ has support on $[1,2]$, $f_2$ vanishes on  $[0,1]$, $U=x^{-1/2+\eps}$, and $V$ runs over $O(\log x)$ real numbers with $U \ll V \ll 1$. 
Repeated integration by parts shows that for all sufficiently large $A>0$
\begin{equation}
\label{eq:stapler}
 \int_{1}^{\infty} e^{ix(\cosh v -1)} g(v, \cdot) f_2(v)\,dv \ll T^{ j} x^{-j} \ll_{A} x^{-A},
\end{equation}
taking $j$ large, and using $x \gg T^2$. Similarly, applying \cite[Lem.\ 8.1]{BKY} with parameters $(X,Y,Q,R,U, \alpha, \beta)$ in our situation taking the values $(1, x, 1, xV, 1, V, 2V)$, we see that
$$ \intR e^{ix(\cosh v -1)} g(v, \cdot) f_1(v/V)\,dv \ll_A x^{-A}.$$  Hence 
\begin{equation*}
 F(x, \cdot) = \int_{0}^{2} e^{ix(\cosh v -1)}  g(v, \cdot) f_0\Big(\frac{v}{U}\Big) dv + O_{A}(x^{-A}).
\end{equation*}

Now let us develop $e^{ix(\cosh v -1)}$ by first taking the Taylor expansion for $\cosh v - 1$, and then expanding it in the power series expansion for $\exp$. We get that $$e^{ix(\cosh v -1)} = e^{ixv^2/2} \sum_{m = 0}^M b_m\Big(x\sum_{n = 0}^N a_{n} v^{2n+4}\Big)^{m} + O \left( xv^{2N+6} + (xv^4)^{M+1}\right).$$ Since $v\ll x^{-1/2+\eps}$, we may take $M,N$ large enough depending on $\eps,A$ so that $$e^{ix(\cosh v -1)} = e^{ixv^2/2} \sum_{m,n \geq 0} c_{m,n} (xv^2)^m v^{2n} + O_A(x^{-A}),$$ for some new coefficients $c_{m,n}$ and where all but finitely many of the $c_{m,n}$ are zero.

Thus
\begin{equation*}
F(x, \cdot) = \sum_{m \leq M ,n \leq N} c_{m,n} \int_0^{\infty} (xv^2)^m v^{2n} e^{ix v^2/2}   g(v, \cdot) f_0\Big(\frac{v}{U}\Big) dv + O_{A}(x^{-A}).
\end{equation*}
It transpires that $g$ is nearly constant on the support of $f_0$.  To see this, we note that
\begin{equation*}
 UT \ll x^{-\delta},
\end{equation*}
for some $\eps>\delta > 0$, where $\eps$ is the $\eps$ appearing in $x \gg T^{2+\varepsilon}$, and we have chosen the $\eps$ in the definition of $U$ small enough in comparison. Then, for any $L$ we have \begin{equation*}
 g^{(L)}(\xi) v^{L} \ll   (UT)^{L} \ll  x^{-L \delta},
\end{equation*}
so that we may develop $g$ in a Taylor series around $0$ with finitely many terms, the number of which only depends on $A,\eps$.  
Hence
\begin{equation*}
F(x, \cdot) = \sum_{\ell \leq L, m \leq M, n \leq N} c_{\ell, m,n}   g^{(\ell)}(0) \int_0^{\infty} (xv^2)^m v^{2n+\ell} e^{ix v^2/2}  f_0\Big(\frac{v}{U}\Big) dv + O_{A}(x^{-A}),
\end{equation*}
for all sufficiently large $L,M,N$.
Changing variables $v \rightarrow x^{-1/2} v$, we obtain 
\begin{equation}
\label{eq:WPrettyGoodFormula}
F(x, \cdot) = x^{-1/2} \sum_{\ell,m,n} c_{\ell, m,n}   \frac{g^{(\ell)}(0)}{x^{\ell/2}} x^{-n}  \int_0^{\infty}  v^{2m+2n+\ell} e^{i  v^2/2}  f_0\Big(\frac{v}{x^{\varepsilon}} \Big) dv + O_{A}(x^{-A}).
\end{equation}

Let us analyze the inner integral.  We claim
\begin{equation*}
 \int_0^{\infty}  v^{N} e^{i  v^2/2}  f_0\Big(\frac{v}{x^{\varepsilon}} \Big) dv
 = C(N) + O_{N, A}(x^{-A}),
\end{equation*}
for some constant $C(N)$ independent of $f_0$ and $x$.

\begin{proof}[Proof of claim]
For a smooth function $f$ supported on $|v| \ll 1$, define
\begin{equation*}
 I(N,f,V) = \int_0^{\infty} v^{N} e^{iv^2/2} f(v/V) dv,
\end{equation*}
where $V \gg 1$ is large.
Our first observation is that \cite[Lem.\ 8.1]{BKY} shows that $I(N,f,V) \ll_{A,N,f} V^{-A}$ provided $f$ is supported on $[1/2,4]$, say.  Our $f_0$ is not supported on this interval, but this argument shows $I(0, f_0, V) = I(0,1,V) + O_{A}(V^{-A})$, where $I(0,1,V) = e^{\pi i/4} \sqrt{\frac{\pi}{2}}$.  Next, an integration by parts argument shows
\begin{equation*}
 I(N,f,V) =   i \delta_{N=1} f(0) + i (N-1) I(N-2,f,V) + i V^{-1} I(N-1,f',V).
\end{equation*}
Here we interpet $I(M,f,V) = 0$ if $M < 0$.
Since $f_0'$ is dyadically-supported, this implies
\begin{equation*}
 I(N,f_0, x^{\varepsilon}) =  i \delta_{N=1} + i (N-1) I(N-2, f_0, x^{\varepsilon}) + O_{N,A}(x^{-A}).
\end{equation*}
Repeating, we obtain the claim.
\end{proof}

Applying the claim to \eqref{eq:WPrettyGoodFormula}, we have
\begin{equation*}
 F(x, \cdot) = x^{-1/2} \sum_{\ell \leq L, m \leq M, n \leq N} c_{\ell, m,n} \frac{ g^{(\ell)}(0)}{x^{\ell/2}} x^{-n} + O_{A}(x^{-A}),
\end{equation*}
for some newly re-defined sequence of coefficients $c_{\ell,m,n}$, which completes the proof. 
\end{proof}

\subsection{Properties of $J^-$}
\label{section:Jminus}
\begin{mylemma}
\label{lemma:Jminuspropertiesxsmall}
We have
 \begin{equation}
 \label{eq:Jminusbound}
\frac{\partial^{k} }{\partial x^k } J^{-}(x, \cdot)  \ll_{k, \eps} x^{1-\eps} (x^{-k}+x^{k})  T^{1+k+\eps}.
 \end{equation}
 Moreover, $J^-(x,\cdot)$ is $1$-inert with respect to the variables $t_1, t_2, t_3, c, m_1, m_2, m_3$.
\end{mylemma}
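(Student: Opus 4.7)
The proof will follow the strategy of Lemma~\ref{lemma:Jpluspropertiesxsmall} almost verbatim, with the $J$-Bessel function replaced throughout by the $K$-Bessel function. The plan has three steps: first, establish a pointwise bound on the Bessel kernel $K_{2it}(x)\sinh(\pi t)$ and its $x$-derivatives analogous to \eqref{eq:JBesselBoundTrivial}; second, integrate this pointwise bound against the weight $t\,h(t,y_1,y_2)$; and third, verify the inert property in the auxiliary variables.

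For the pointwise bound, the plan is to exploit the identity
\[
K_{2it}(x)\sinh(\pi t) = \frac{\pi}{4i\cosh(\pi t)}\bigl[I_{-2it}(x) - I_{2it}(x)\bigr],
\]
which exhibits precisely the same $1/\cosh(\pi t)$ denominator that appears in the $J^+$ analysis. Substituting the Taylor expansion $I_{\pm 2it}(x) = \sum_{k\ge 0}(x/2)^{\pm 2it+2k}/(k!\,\Gamma(1\pm 2it+k))$, exploiting the cancellation between the two $I$-Bessel terms (for real $t$ the bracketed difference equals $-2i\,\Im[(x/2)^{2it}/\Gamma(1+2it+k)]$ termwise), and invoking Stirling's formula to estimate the gamma denominators, I would derive an estimate of the form
\[
\left|K_{2it+a}(x)\sinh(\pi t)\right| \ll_{a,\delta}\; \frac{1+|t|}{|t|}\cdot\frac{(x/2)^{1-\delta+a}}{|1/2+2it|^{1-\delta+a}}
\]
for any $\delta>0$ and any integer $a\ge 0$. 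Equivalently, one may shift the $t$-contour in the defining integral for $J^-$ down to $\text{Im}(t) = -(1/2-\delta)$, paralleling the $J^+$ argument and using the analyticity of $h(t,\cdot)$ in a strip, to obtain the same bound with the $\delta$-loss recorded explicitly.

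For the $x$-derivatives, the plan is to iterate the $K$-Bessel recursion $\tfrac{d}{dx}K_\nu(x) = -\tfrac12(K_{\nu-1}(x)+K_{\nu+1}(x))$ to write $\tfrac{d^k}{dx^k}K_{2it}(x)$ as a fixed linear combination of $K_{2it+k-2j}(x)$ for $0\le j\le k$, and to apply the pointwise bound to each summand. Integrating against $|t\,h(t,y_1,y_2)|\ll (1+|t|^3)T^{-2}\exp(-(t/T)^2)$, which is essentially supported on $|t|\ll T$, then yields \eqref{eq:Jminusbound}. The inert property in the remaining variables is immediate, since $J^-(x,\cdot)$ depends on $(n_1,n_2,n_3,c,m_1,m_2,m_3)$ only through the $1$-inert function $w(\cdot)$ and through $y_1=n_1/q$, $y_2=n_2n_3d^2/q^2$ inside $h$, and $V_1,V_2$ are uniformly controlled in $(y_1,y_2)$ by Lemma~\ref{lemma:Vproperties}.

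The main obstacle is the pointwise Bessel bound in the second paragraph. Unlike $J_\nu(x)$, which vanishes at $x=0$ with a clean power, $K_\nu(x)$ does not vanish there, so the $x$-factor in \eqref{eq:Jminusbound} must be extracted from partial cancellation between $I_{-2it}(x)$ and $I_{2it}(x)$ (or, equivalently, from a small contour shift). This cancellation is only partial---one does not recover the full power of $x$ available for $J$-Bessel---which is precisely why the bound loses a factor of $(T/x)^{\delta}$ relative to its $J^+$ analogue.
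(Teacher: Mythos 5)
Your overall architecture (pointwise Bessel bound, integrate against the weight, verify inertness) is the right template, and the $k\ge 1$ reduction via the $K$-Bessel recursion $\frac{d}{dx}K_\nu = -\tfrac12(K_{\nu-1}+K_{\nu+1})$ and the inertness paragraph are both fine. But the central step — the claimed pointwise bound
\begin{equation*}
\left|K_{2it+a}(x)\sinh(\pi t)\right| \ll_{a,\delta}\; \frac{1+|t|}{|t|}\cdot\frac{(x/2)^{1-\delta+a}}{|1/2+2it|^{1-\delta+a}}
\end{equation*}
is false, and it is not a cosmetic issue. For $a=0$ and fixed $t\neq 0$, as $x\to 0$ the left side oscillates with magnitude $\asymp 1/|\Gamma(1+2it)\cosh(\pi t)|$, which does not tend to zero: the leading term of $I_{-2it}(x)-I_{2it}(x)$ is $-2i\,\Im[(x/2)^{2it}/\Gamma(1+2it)]$, whose modulus is of size $1/|\Gamma(1+2it)|$ for generic small $x$, with no decay in $x$. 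The ``cancellation between $I_{-2it}$ and $I_{2it}$'' you invoke yields a purely imaginary quantity, not a small one, so it cannot produce the factor $(x/2)^{1-\delta}$. The alternative route you offer — shifting the $t$-contour to $\imag(t)=-(1/2-\delta)$ — fails for the same structural reason: on that contour $K_{2it}(x)$ becomes $K_{(1-2\delta)+2iy}(x)$, which grows like $x^{-(1-2\delta)}$ as $x\to 0$, so the shift gives a \emph{negative}, not positive, power of $x$. This is the essential asymmetry with the $J^+$ case: $J_\nu(x)\sim (x/2)^\nu/\Gamma(\nu+1)$ vanishes at $x=0$ once $\real(\nu)>0$, while $K_\nu(x)$ blows up there.

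The idea you are missing is that the factor of $x$ in \eqref{eq:Jminusbound} must be harvested not from a pointwise bound on $K_{2it}(x)\sinh(\pi t)$ but from the three-term recursion $K_{\nu-1}(x)-K_{\nu+1}(x)=-\tfrac{2\nu}{x}K_\nu(x)$, applied with $\nu=2it$ against the explicit factor of $t$ in the integrand $t\,h(t,\cdot)$. Writing $t K_{2it}(x) = \tfrac{x}{4i}\bigl(K_{1+2it}(x)-K_{1-2it}(x)\bigr)$ produces the overall prefactor $x$ and replaces the purely imaginary order by orders with real part $\pm 1$. Only then does one shift contours (to $\real(1\pm 2it)=\delta$) and apply the correct pointwise bound $K_{\delta+2iy}(x)\ll_\delta (1+|y|)^\delta/(x^\delta\cosh(\pi y))$ — which, note, carries a \emph{negative} power of $x$, consistent with the behaviour of $K$ at the origin and accounting for the $\delta$-loss in the final estimate. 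Without this recursion step, no pointwise estimate on $K_{2it}(x)\sinh(\pi t)$ can be correct with a positive power of $x$, and the argument does not close.
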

As in the $J^+$ case, this lemma is of interest to us when $x$ is not too large.  In the complementary case we have the following.
\begin{mylemma}
\label{lemma:JminusBound}
 Suppose for some $\eps>0$ that  $1 \leq T^{1+\eps} \ll x  $.  Then $J^{-}(x, \cdot) \ll_{A} x^{-A}$

\end{mylemma}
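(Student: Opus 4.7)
The plan is to apply the Mellin--Barnes representation of $K_{2it}(x)$ to extract an arbitrary negative power of $x$, and then to exploit the precise cancellation between the $e^{-\pi|t|}$ factor coming from Stirling's formula applied to the pair of Gamma factors and the $e^{\pi|t|}$ growth of $\sinh(\pi t)$. The Gaussian weight $\exp(-(t/T)^2)$ in $h$ absorbs the remaining polynomial factor in $t$, and the resulting power of $T$ is dominated by $x^\sigma$ because of the hypothesis $x\gg T^{1+\varepsilon}$.

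Concretely, one has
\[
K_{2it}(x) = \frac{1}{8\pi i}\int_{(\sigma)}\Gamma\!\left(\tfrac{s+2it}{2}\right)\Gamma\!\left(\tfrac{s-2it}{2}\right)(x/2)^{-s}\,ds,\qquad \sigma>0,
\]
which I would substitute into \eqref{eq:Jpmdef} for $J^-$, swap orders of integration (justified by absolute convergence), and estimate on the line $\real(s)=\sigma$ with $\sigma$ to be chosen large. Writing $s=\sigma+iv$ and applying Stirling's formula yields
\[
\Big|\Gamma\!\left(\tfrac{s+2it}{2}\right)\Gamma\!\left(\tfrac{s-2it}{2}\right)\Big|\ll_\sigma (1+|v|+|t|)^{\sigma-1}\,e^{-\pi\max(|v|,2|t|)/2},
\]
and splitting the $v$-integral at $|v|=2|t|$ gives the pointwise bound
\[
|K_{2it}(x)|\ll_\sigma x^{-\sigma}(1+|t|)^{\sigma}\,e^{-\pi|t|}
\]
uniformly in $t\in\R$.

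Multiplying by $|\sinh(\pi t)|\leq e^{\pi|t|}$ exactly cancels the exponential decay, leaving $|K_{2it}(x)\sinh(\pi t)|\ll_\sigma x^{-\sigma}(1+|t|)^{\sigma}$. Inserting this into \eqref{eq:Jpmdef}, using $|V_j(y,t)|\ll 1$ from Lemma \ref{lemma:Vproperties}, and evaluating the resulting Gaussian $t$-integral via the change of variables $t=Tu$, I obtain $|J^-(x,\cdot)|\ll_\sigma x^{-\sigma}T^{\sigma+2}$. Since $x\gg T^{1+\varepsilon}$ implies $T\ll x^{1/(1+\varepsilon)}$, choosing $\sigma$ larger than $(2+A(1+\varepsilon))/\varepsilon$ delivers $|J^-(x,\cdot)|\ll x^{-A}$, as required.

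There is no serious obstacle here, only bookkeeping with Stirling. The essential mechanism is the exact match $e^{-\pi|t|}\cdot e^{\pi|t|}=1$ between the pair of Gamma factors and $\sinh(\pi t)$, so no exponential loss in $|t|$ is incurred and the Gaussian weight in $h$ alone suffices to control the integral. This is why only the weaker hypothesis $x\gg T^{1+\varepsilon}$ is needed, in contrast to the oscillatory analysis of $J^+$ in Lemma \ref{lemma:JplusAsymptoticExpansion}, which required $x\gg T^{2+\varepsilon}$.
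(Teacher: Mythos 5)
Your argument is correct and gives the claimed bound, but it proceeds by a genuinely different route than the paper. The paper's proof uses the Fourier-type integral representation \cite[8.432.4]{GR}, writing $J^-(x,\cdot) = T^2 \int_{\R} \cos(x\sinh v)\,g(v,\cdot)\,dv$ with $g^{(j)}(v,\cdot) \ll T^j(1+|v|)^{-A}$; after truncating to $|v|\leq 1$ and substituting $v=\arcsinh(u)$ to linearize the phase, the integral becomes an ordinary cosine transform of a Schwartz function with $j$-th derivative $\ll T^j$, yielding $J^-(x,\cdot)\ll T^2 (T/x)^j$, which is $\ll x^{-A}$ under $x\gg T^{1+\varepsilon}$. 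You instead go through the Mellin--Barnes representation of $K_{2it}(x)$, push the $s$-contour to $\real(s)=\sigma$ large, and use Stirling on the pair of Gamma factors; the decisive observation is the exact cancellation $e^{-\pi|t|}\cdot e^{\pi|t|}=1$ against $\sinh(\pi t)$, so the Gaussian weight in $h$ alone controls the residual polynomial $(1+|t|)^\sigma$, giving $J^-(x,\cdot)\ll_\sigma x^{-\sigma}T^{\sigma+2}$. Both proofs ultimately deliver an estimate of the shape $x^{-\sigma}T^{\sigma+O(1)}$ with $\sigma$ at your disposal, and both hinge on $x\gg T^{1+\varepsilon}$ to make the $T$-power negligible. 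Your approach is actually closer in spirit to the paper's proof of the companion Lemma \ref{lemma:Jminuspropertiesxsmall}, which also proceeds by a contour shift and a $K$-Bessel bound \eqref{eq:KboundTrivial}; the Fourier-integral route the paper chooses here has the minor advantage that the identical manipulation feeds directly into the proof of Lemma \ref{lemma:JplusAsymptoticExpansion} for $J^+$, where a genuine asymptotic expansion, not just an upper bound, is required.
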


 \begin{proof}[Proof of Lemma \ref{lemma:Jminuspropertiesxsmall}]
 As in the proof of Lemma \ref{lemma:Jpluspropertiesxsmall}, the property that $J^{-}$ is $1$-inert with respect to the other variables  is easy to see, so we now focus on the bound \eqref{eq:Jminusbound}.
 By \cite[8.486.10]{GR}, we have
 \begin{equation}\label{eq:J-int}
  J^{-}(x, \cdot) = \frac{x}{i \pi^2} \intR (K_{1+2it}(x) - K_{1-2it}(x)) \sinh(\pi t)  \exp(-(t/T)^2) \frac{(t^2+\frac14)}{T^2} V_{1}(\cdot, t) V_{2}(\cdot,t) dt.
 \end{equation}
From \cite[8.432.5]{GR}, that is,
\begin{equation*}
K_{\nu}(x) = \frac{\Gamma(\nu + \frac12) 2^{\nu}}{x^{\nu} \Gamma(\frac12)} \int_0^{\infty} \frac{\cos(xt)}{(t^2 + 1)^{\frac12 + \nu}}  dt, \qquad \text{Re}(\nu) > 0,
\end{equation*}
one may readily deduce that
\begin{equation}
\label{eq:KboundTrivial}
 K_{{\eps}+2iy}(x) \ll_{{\eps}} \frac{(1+|y|)^{{\eps}}}{x^{{\eps}} \cosh( \pi y)}
\end{equation}
for $y \in \mr$.
For the part of the integral \eqref{eq:J-int} with $K_{1+2it}$ we move the contour to $\text{Re}(1+2it) = {\eps} > 0$, in all giving a contribution to $J^{-}(x)$ of size $\ll x^{1-{\eps}} T^{1+{\eps}}$.  A similar bound works for the part of the integral with $K_{1-2it}(x)$, but by shifting the contour the other way.  
This gives the desired bound for $k=0$.

Next we sketch how to treat $k \geq 1$.  The bound on $K_{{\eps}+2iy}$ given in \eqref{eq:KboundTrivial} has the same essential features as \eqref{eq:JBesselBoundTrivial}.  Moreover, the $K$-Bessel function satisfies 
\eqref{eq:JBesselRecursionFormula}. 
The same method used for $J^{+}$ now carries over to $J^{-}$ without any significant changes.
 \end{proof}

\begin{proof}[Proof of Lemma \ref{lemma:JminusBound}]
 From \cite[8.432.4]{GR}  one may derive 
\begin{equation*}
  J^{-}(x, \cdot) = T^2 \intR \cos(x \sinh(v)) g(v, \cdot) dv,
 \end{equation*}
 where $g$ satisfies \eqref{eq:gbound}.  (Here $g(v, \cdot)$ may differ slightly from that occuring in the proof of Lemma \ref{lemma:JplusAsymptoticExpansion}, but only by an absolute constant).
 
 As in the proof of Lemma \ref{lemma:JplusAsymptoticExpansion}, we can cut the integral at $|v| \leq 1$ again (with a smooth cutoff), since repeated integration by parts shows the complement is $O_{A}(x^{-A})$ for any $A>0$.
 Therefore,
\begin{equation*}
J^{-}(x, \cdot) = T^2 \intR \cos(x \sinh v) g_1(v, \cdot)  dv + O_{A}(x^{-A}),
\end{equation*} 
where $g_1^{(j)}(v,\cdot) \ll_{A} T^j (1+|v|)^{-A}$.
We then change variables $v = \arcsinh(u)$, so $dv = (1+u^2)^{-1/2} du$, giving
\begin{equation}
\label{eq:JminusIntegral}
J^{-}(x, \cdot) = T^2 \intR \cos(x u) g_0(u, \cdot) du + O_{A}(x^{-A}), \qquad g_0(u, \cdot) = g_1(\arcsinh(u), \cdot) (1+u^2)^{-1/2}.
\end{equation}
Since $\arcsinh(u)$ is smooth with bounded derivatives for $u \ll 1$, then $g_0(u, \cdot)$ is Schwartz-class and satisfies $g_0^{(j)}(u, \cdot) \ll_{j,A} T^{j} (1+|u|)^{-A}$.  The integral in \eqref{eq:JminusIntegral} is a cosine transform of $g_0$, and is hence $O(T^2 (T/x)^j)$, for any $j\geq 0$, which is $O_{A}(x^{-A})$ for any $A>0$, since $x \gg T^{1+\varepsilon}$ by assumption.
\end{proof}

\section{Archimedean aspects, part 2}
\label{section:WeightFunctions2}
The goal in this section is to understand the behavior of $K^{\pm}$ defined by \eqref{eq:Kdef}.  

We begin with some comments to help bridge the material in \cite[\S 10.4]{PetrowYoung} to here.  In that article, the analog of $K$ was defined but with $J^{\pm}(x, \cdot)$ replaced by $J_{\kappa -1}(x)$, the $J$-Bessel function, with $\kappa$  fixed.  Nevertheless, a great majority of the work done on $K$ in \cite{PetrowYoung} carries over to here, and the properties of $J^{\pm}$ developed in Section \ref{section:WeightFunctions1}  will allow for this extension.

Throughout this section we assume that for some $0 <\eta \leq 1/13$ that
\begin{equation}
\label{eq:Tqsize}
 T \ll q^{\eta}.
\end{equation}
The precise $T$-dependence is not important for the proof of Theorem \ref{thm:mainthmMaassEisenstein}.

\subsection{The properties of $K$}
\begin{mylemma}[Oscillatory Case]
 \label{lemma:Kbound}
Suppose that $|m_j| \asymp M_j$ for $j=1,2,3$, and $c \asymp C$.  
 Suppose that there exists $\delta>0$ such that
 \begin{equation}
 \label{eq:OscCondition}
  \frac{\sqrt{N_1 N_2 N_3}}{C} \gg T^2 q^{\delta},
  \end{equation}
Then 
\begin{multline}
\label{eq:Kseparated2}
K^+(m_1, m_2, m_3, c) = \frac{T^2 C^{2} (N_1 N_2 N_3)^{1/2} e_c(-m_1 m_2 m_3)}{M_1 M_2 M_3}  L(m_1, m_2, m_3, c)
\\
+ O_{\delta, A}(q^{-A} \prod_{j=1}^{3} (1+ m_j)^{-2} ),
\end{multline}
where $L$ has the following properties.  Firstly, $L$ vanishes (meaning $K^+$ is very small) unless
\begin{equation}
\label{eq:MsizeOscillatoryRange}
M_j \asymp \frac{( N_1 N_2 N_3)^{1/2}}{N_j}, \quad j=1,2,3,
\end{equation}
and all the $m_j$ have the same sign.
Moreover, we have that
\begin{multline}
\label{eq:LIntegralFormulaOscillatoryRange}
L(m_1, m_2, m_3, c) =  \int_{|{\bf u}| \ll q^{\varepsilon}} \int_{|y| \ll q^{\varepsilon}} F({\bf u};y) \Big(\frac{|m_1 m_2 m_3|}{c }\Big)^{iy} 
\\
\Big(\frac{M_1}{|m_1|}\Big)^{u_1} \Big(\frac{M_2}{|m_2|}\Big)^{u_2} \Big(\frac{M_3}{|m_3|}\Big)^{u_3} \Big(\frac{C}{c}\Big)^{u_4} d{\bf u} dy,
\end{multline}
where $F = F_{ C, N_1, N_2, N_3, M_1, M_2, M_3}$ is entire in terms of ${\bf u}$, and satisfies $F({\bf u};y) \ll_{\text{Re}(\bf{u}), A} (1+|{\bf u}|)^{-A} (1+|y|)^{-A}$ for any $A>0$.

Finally, $K^-(m_1, m_2, m_3, c) \ll q^{-1000}$.
\end{mylemma}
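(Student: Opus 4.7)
The plan is to substitute the asymptotic expansion of Lemma~\ref{lemma:JplusAsymptoticExpansion} for $J^+$ into the defining integral \eqref{eq:Kdef} for $K^+$ and then carry out three-dimensional stationary phase on each of the two resulting oscillatory integrals. Because the hypothesis \eqref{eq:OscCondition} guarantees $x = 4\pi\sqrt{t_1 t_2 t_3}/c \asymp \sqrt{N_1 N_2 N_3}/C \gg T^{2+\varepsilon}$ for $(t_j,c)$ in the dyadic support of $w$, Lemma~\ref{lemma:JplusAsymptoticExpansion} applies uniformly and writes $J^+(x,\cdot) = \sum_{\pm} T^2 x^{-1/2} e^{\pm ix} W_\pm(x,\cdot) + O(x^{-A})$, with each $W_\pm$ inert in all auxiliary variables. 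The error $O(x^{-A})$ gives a contribution to $K^+$ that is absorbed into the $O(q^{-1/\varepsilon}\prod(1+m_j)^{-2})$ remainder, since we can first integrate by parts many times in the $t_j$ against $e_c(-m_j t_j)$ to gain arbitrary decay in $m_j$.

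After rescaling $t_j \to N_j s_j$ with $s_j \asymp 1$, the oscillatory factor in each $\epsilon$-piece has phase
\begin{equation*}
\Phi_\epsilon(\mathbf{s}) = \tfrac{2\pi}{c}\bigl(2\epsilon\sqrt{N_1 N_2 N_3\, s_1 s_2 s_3} - m_1 N_1 s_1 - m_2 N_2 s_2 - m_3 N_3 s_3\bigr), \qquad \epsilon\in\{\pm 1\}.
\end{equation*}
The stationarity equations $\partial_{s_j}\Phi_\epsilon=0$ read $\epsilon\sqrt{N_1N_2N_3 s_1 s_2 s_3}/s_j = m_j N_j$, and hence possess a (unique) solution inside the support of the weight if and only if $\mathrm{sgn}(m_j)=\epsilon$ for all $j$ and $|m_j|\asymp \sqrt{N_1N_2N_3}/N_j$; in every other case, integration by parts in a nonstationary direction produces arbitrarily small contributions. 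This yields both the common-sign condition and the size constraint \eqref{eq:MsizeOscillatoryRange}. A short computation shows that the phase at the stationary point $\mathbf{s}_0$ equals $-2\pi m_1 m_2 m_3/c$, producing the factor $e_c(-m_1 m_2 m_3)$ in \eqref{eq:Kseparated2}, while the Hessian determinant at $\mathbf{s}_0$ accounts for the amplitude $C\,(N_1N_2N_3)^{1/2}/(M_1M_2M_3)$ (after accounting for the $x^{-1/2}$ and the Jacobian $N_1N_2N_3$ from the rescaling).

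Next I would invoke a multi-dimensional stationary phase expansion (of the form used in \cite[\S 8]{BKY} and \cite{KiralPetrowYoung}), which produces the main term as a smooth function $L(m_1,m_2,m_3,c)$ in which all slowly varying dependence on $|m_j|/M_j$ and $c/C$ is packaged into an inert amplitude. The Mellin--Barnes representation \eqref{eq:LIntegralFormulaOscillatoryRange} is then obtained by Mellin-inverting this amplitude in each of the four ratios $M_j/|m_j|$ and $C/c$; the extra one-dimensional integral in $y$ records the residual dependence on $|m_1m_2m_3|/c$ coming from the amplitude (and vanishing to rapid decay due to the inertness of $W_\pm$). The required decay in $\mathbf{u}$ and $y$ follows from the inertness of the amplitude and standard Mellin estimates.

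For $K^-$, the assumption \eqref{eq:OscCondition} gives $x \gg T^{2+\varepsilon} \gg T^{1+\varepsilon}$, so Lemma~\ref{lemma:JminusBound} immediately yields $J^-(x,\cdot) \ll x^{-A}$, and trivial bounds on the triple integral then give $K^- \ll q^{-1000}$. The main obstacle is the bookkeeping of the three-dimensional stationary phase computation and the explicit Mellin separation; however, this step is essentially identical to the analysis in \cite[\S 10.4]{PetrowYoung}, the only structural change being that the fixed-weight Bessel kernel $J_{\kappa-1}$ there is replaced by the inert family $J^+$, which was precisely the purpose of Lemma~\ref{lemma:JplusAsymptoticExpansion}.
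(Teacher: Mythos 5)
Your proposal is correct and follows essentially the same route as the paper: the paper's proof is a brief sketch that substitutes the asymptotic of Lemma \ref{lemma:JplusAsymptoticExpansion} and then refers the stationary-phase/Mellin analysis to the analogous statement in \cite[Lem.~11]{PetrowYoung}, while the $K^-$ claim is read off from Lemma \ref{lemma:JminusBound}. You have merely fleshed out the details that the paper delegates to \cite{PetrowYoung}, and your computations (stationary point, resulting size/sign constraints, phase $-2\pi m_1m_2m_3/c$, and the amplitude bookkeeping via the Hessian, Jacobian, and the factor $T^2 x^{-1/2}$) are consistent with the stated form of \eqref{eq:Kseparated2}--\eqref{eq:LIntegralFormulaOscillatoryRange}.
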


\begin{proof}[Sketch of proof]
 The above concerns the case where $J^{+}$ is given by Lemma \ref{lemma:JplusAsymptoticExpansion}, wherein $J^+(x)$ is essentially of the form $e^{\pm ix}/\sqrt{x}$, times a smooth function of $x$.  This is the same shape of $J_{\kappa - 1}(x)$ that was used in \cite[Lem.\ 11]{PetrowYoung} and so the method used there carries over with minimal changes.
 
The final statement on the small size of $K^-$ follows immediately from Lemma \ref{lemma:JminusBound}.
\end{proof}

\begin{mylemma}[Non-oscillatory case]
\label{lemma:KboundNonOscillatory}
Suppose that $m_j \asymp M_j$ for $j=1,2,3$,  $c \asymp C$, and
\begin{equation}
\label{eq:NonOscCondition}
 \frac{\sqrt{ N_1 N_2 N_3}}{C} \ll T^2 q^{\delta}.
\end{equation}
Then for both cases $K = K^{\pm}$, we have 
\begin{multline}
\label{eq:KintegralformulaNonOscillatory}
 K(m_1, m_2, m_3, c) = T 
N_1 N_2 N_3 \Big(\frac{\sqrt{N_1 N_2 N_3}}{C}\Big)^{ } e_c(-m_1 m_2 m_3 )   
\int_{|{\bf u}| \ll T^2 q^{\delta+\varepsilon}} F({\bf u})
\\
\int_{|t| \ll q^{\varepsilon} + P} f(t) \Big(\frac{|m_1 m_2 m_3|  }{c }\Big)^{it}  
\Big(\frac{M_1}{|m_1|}\Big)^{u_1} \Big(\frac{M_2}{|m_2|}\Big)^{u_2} \Big(\frac{M_3}{|m_3|}\Big)^{u_3} \Big(\frac{C}{c}\Big)^{u_4} dt d{\bf u}
 \\
  + O_{\eps, A}(q^{-A} \prod_{j=1}^{3} (1+ m_j)^{-2} ),
\end{multline}
where $P$ is defined by 
\begin{equation}
\label{eq:Pdef}
P = \frac{M_1 M_2 M_3}{C},
\end{equation}
$f(t) \ll (1+|t|)^{-1/2}$, and $F({\bf u}) \ll_{A, \text{Re}({\bf u}), \eps} q^{\varepsilon} \prod_{\ell=1}^{4} (1+\frac{|u_{\ell}|}{q^{\varepsilon}})^{-A}$ for all $A>0$.  Moreover, $F$ vanishes (meaning $K$ is small) unless
\begin{equation}
\label{eq:smallkiMi}
 \frac{M_1 N_1}{C} \ll_{\eps} q^{\varepsilon}, \qquad  \frac{M_2 N_2}{C} \ll_{\eps} q^{\varepsilon}, \qquad  \frac{M_3 N_3}{C} \ll_{\eps} q^{\varepsilon}.
\end{equation}
If there exists $\eps>0$ such that $P \gg q^{\varepsilon}$ then $f$ may be chosen to have support on $|t| \asymp P$.
\end{mylemma}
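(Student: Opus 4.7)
My strategy adapts the method of \cite{PetrowYoung}, substituting the new kernel $J^\pm(x,\cdot)$ for the Bessel function $J_{\kappa-1}(x)$ used there and relying on its smoothness properties from Section \ref{section:WeightFunctions1}. The plan has three stages: (i) trim the $(m_1,m_2,m_3)$-support via integration by parts; (ii) separate variables using Mellin inversion to reduce to standard one-dimensional Fourier-Mellin integrals; (iii) identify the surviving contour variable with the $t$ in \eqref{eq:KintegralformulaNonOscillatory} and pin down its support.

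For step (i), the condition \eqref{eq:smallkiMi} should emerge from repeated integration by parts in each $t_j$ against $e_c(-m_jt_j)$. The weight $w(\cdot)$ is $1$-inert in $t_j$ with scale $N_j$, and by the chain rule together with the bound $\partial_x^k J^\pm(x,\cdot)\ll x(x^{-k}+x^k)T^{k+1}$ from Lemmas \ref{lemma:Jpluspropertiesxsmall} and \ref{lemma:Jminuspropertiesxsmall}, the $t_j$-derivatives of $J^\pm(4\pi\sqrt{t_1t_2t_3}/c,\cdot)$ are bounded by $T^{O(1)}/N_j$ throughout the non-oscillatory regime. Each integration by parts gains a factor $CT^{O(1)}/(M_jN_j)$; since $T\ll q^\eta$ by \eqref{eq:Tqsize}, sufficiently many repetitions yield decay beyond any polynomial in $q$ unless $M_jN_j\ll Cq^\varepsilon$.

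For step (ii), in the range where \eqref{eq:smallkiMi} holds, I would Mellin-invert both $J^\pm$ in its first argument and the $1$-inert function $w$ in the variables $(t_1,t_2,t_3,c)$, writing
\begin{equation*}
J^\pm\!\Big(\tfrac{4\pi\sqrt{t_1t_2t_3}}{c},\cdot\Big) = \frac{1}{2\pi i}\int_{(\sigma)}\widetilde{J^\pm}(s,\cdot)(4\pi)^{-s}c^s(t_1t_2t_3)^{-s/2}\,ds.
\end{equation*}
The three $t_j$-integrals then decouple into one-dimensional Fourier-Mellin integrals $\int_0^\infty t_j^{-s/2-u_j}e_c(-m_jt_j)\,dt_j$, each evaluating after contour rotation to $\Gamma(1-s/2-u_j)(2\pi im_j/c)^{s/2+u_j-1}$. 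Assembling powers of $c$, $m_j$, $N_j$, $M_j$, $C$ along the Mellin contours produces the prefactor $N_1N_2N_3\cdot\sqrt{N_1N_2N_3}/C$, the monomial $\prod_j(M_j/m_j)^{u_j}(C/c)^{u_4}$, and the factor $(|m_1m_2m_3|/c)^{s/2}$ surviving in the $s$-integral. Parametrizing the $s$-contour as $s=-2it$ converts this to $(|m_1m_2m_3|/c)^{it}$, while Stirling on the triple Gamma product combined with the phases picked up during contour rotations should yield both the bound $f(t)\ll(1+|t|)^{-1/2}$ and the common phase $e_c(-m_1m_2m_3)$ that appears out front, matching the form of the oscillatory formula \eqref{eq:Kseparated2}.

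The hard part will be step (iii): controlling the location of the $s$-contour and identifying the support in $t$. Note that $\widetilde{J^\pm}(s,\cdot)$ is itself a spectral integral,
\begin{equation*}
\widetilde{J^\pm}(s,\cdot) = \int_{\mr} \widetilde{J_{2i\tau}}(s)\,\tau h(\tau,\cdots)/\cosh(\pi\tau)\,d\tau,
\end{equation*}
with $\widetilde{J_{2i\tau}}(s)$ a ratio of Gamma functions involving $s\pm 2i\tau$. When $P=M_1M_2M_3/C\ll q^\varepsilon$, the contour can simply be taken on $|s|\ll q^\varepsilon$ with no further work. When $P\gg q^\varepsilon$, however, one must shift the $s$-contour to $|s|\asymp P$ and perform a saddle-point analysis in $\tau$ (exploiting the Schwartz decay of $h$) to isolate the effective support $|t|\asymp P$ for $f(t)$. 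This is the step with no clean analog in \cite{PetrowYoung}, since there $J_{\kappa-1}$ carries no spectral parameter; a careful stationary-phase argument will be needed. However, the $1$-inertness of $J^\pm(x,\cdot)$ in its auxiliary parameters, proved in Section \ref{section:WeightFunctions1}, should be precisely enough to make all estimates uniform across parameter ranges and to let the arguments of \cite{PetrowYoung} carry over with only cosmetic changes.
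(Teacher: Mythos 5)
The paper's proof of this lemma is a one-line deferral: it observes that the derivative bounds on $J^{\pm}$ established in Lemmas \ref{lemma:Jpluspropertiesxsmall} and \ref{lemma:Jminuspropertiesxsmall} are ``essentially the only properties that were used about $J_{\kappa-1}(x)$ in \cite[Lem. 12]{PetrowYoung},'' so the argument there carries over verbatim with $J^{\pm}$ substituted for $J_{\kappa-1}$. Your plan is a reasonable reconstruction of what \cite[Lem. 12]{PetrowYoung} actually does, and steps (i) and (ii) are on target: integration by parts in each $t_j$ against $e_c(-m_jt_j)$, using the $1$-inert structure plus the $x$-derivative bounds for $J^{\pm}$ (with the $T^{k+1}$ factors absorbed by $T\ll q^{\eta}$), produces the support condition \eqref{eq:smallkiMi}; then Mellin separation in the remaining variables decouples the $t_j$-integrals into Gamma-type factors and yields the $(M_j/|m_j|)^{u_j}(C/c)^{u_4}$ structure.

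Where you go astray is in step (iii). You assert that ``one must shift the $s$-contour to $|s|\asymp P$ and perform a saddle-point analysis in $\tau$ (exploiting the Schwartz decay of $h$)... a careful stationary-phase argument will be needed,'' and claim this has ``no clean analog in \cite{PetrowYoung}.'' This misreads the intended logic. The spectral integral over $\tau$ that sits inside $J^{\pm}$ is \emph{already} black-boxed: the whole point of Lemmas \ref{lemma:Jpluspropertiesxsmall} and \ref{lemma:Jminuspropertiesxsmall} is to compress the $\tau$-integral into the derivative bound $\partial_x^k J^{\pm}(x,\cdot)\ll x(x^{-k}+x^k)T^{k+1}$, which has the same shape as the bound on $\partial_x^k J_{\kappa-1}(x)$ used in \cite[Lem. 12]{PetrowYoung}. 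The localization $|t|\ll q^{\varepsilon}+P$ and the clean statement that $f$ may be taken with support $|t|\asymp P$ when $P\gg q^{\varepsilon}$ come from analyzing the archimedean $t_j$-integrals against $e_c(-m_jt_j)$ (and the resulting Gamma factors via Stirling), exactly as in the fixed-$\kappa$ case; no new stationary-phase analysis in the spectral parameter $\tau$ is needed, and none is implied by the additional $T$-dependence, which is controlled polynomially by \eqref{eq:Tqsize}. Your closing sentence, acknowledging that the $1$-inertness should let the argument of \cite{PetrowYoung} carry over with cosmetic changes, is in fact the correct conclusion; the preceding paragraph about re-opening the $\tau$-integral is an unnecessary detour.
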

\begin{proof}[Sketch of proof]
 In this case, $J^{\pm}$ satisfies the properties of Lemma \ref{lemma:Jpluspropertiesxsmall} or \ref{lemma:Jminuspropertiesxsmall} (depending on the choice of $\pm$).  In turn, these are essentially the only properties that were used about $J_{\kappa-1}(x)$ in \cite[Lem.\ 12]{PetrowYoung}.
 \end{proof}

\begin{mylemma}[Other cases]
\label{lemma:KboundOther}
Suppose some $m_j = 0$, and let $K$ denote either case of $K^{\pm}$.  If \eqref{eq:OscCondition} holds, then $K$ is small.  If \eqref{eq:NonOscCondition} holds, then $K$ is small unless $|m_j| \ll_{\eps} \frac{C}{N_j} q^{\varepsilon}$ for $j=1,2,3$, in which case
\begin{equation}
\label{eq:KboundOther}
K(m_1, m_2, m_3;c) \ll_{\eps} T N_1 N_2 N_3 \Big(\frac{\sqrt{N_1 N_2 N_3}}{C}\Big)^{ }   q^{\varepsilon}. 
\end{equation}
\end{mylemma}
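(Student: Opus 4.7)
The plan is to mirror the proofs of Lemmas \ref{lemma:Kbound} and \ref{lemma:KboundNonOscillatory} while allowing some of the $m_j$ to vanish. Writing $x = 4\pi\sqrt{t_1 t_2 t_3}/c$, on the support of $w$ we have $x \asymp \sqrt{N_1 N_2 N_3}/C$. By the symmetry of $K^{\pm}$ in $(m_1, m_2, m_3)$, we may assume $m_1 = 0$; the cases where additionally $m_2=0$ or $m_3=0$ are handled in the same way.

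In the oscillatory regime \eqref{eq:OscCondition}, we have $x \gg T^{2+\varepsilon} \gg T^{1+\varepsilon}$ throughout the support, so Lemma \ref{lemma:JminusBound} immediately yields $J^- = O(x^{-A})$, hence $K^-$ is negligible. For $K^+$, Lemma \ref{lemma:JplusAsymptoticExpansion} replaces the integrand by $T^2 x^{-1/2} e^{\pm i x} W_{\pm}(x,\cdot) \cdot e_c(-m_2 t_2 - m_3 t_3)$ up to negligible error, with $W_{\pm}$ being $1$-inert in all variables. The phase $\pm x$ has $t_1$-derivative of magnitude $\asymp x/t_1$, so multiplied by the support length $N_1$ it is $\asymp x \gg T^2 q^\varepsilon$. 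Since the amplitude is $1$-inert in $t_1$ (it depends on $t_1$ only through $x$, and $t_1 \partial_{t_1} = \tfrac12 x \partial_x$ preserves inertness), repeated integration by parts in $t_1$ renders $K^+$ negligible.

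In the non-oscillatory regime \eqref{eq:NonOscCondition}, we proceed in two steps. First, for each $j \in \{2,3\}$ with $m_j \neq 0$ and $|m_j| N_j / C \gg q^\varepsilon$, the phase $-2\pi m_j t_j / c$ in $e_c(-m_j t_j)$ has derivative times support length $\asymp |m_j| N_j / C \gg q^\varepsilon$. By Lemmas \ref{lemma:Jpluspropertiesxsmall}, \ref{lemma:Jminuspropertiesxsmall}, and \ref{lemma:Vproperties}, applied via the chain rule to $J^\pm(x(t_1,t_2,t_3), t_1, t_2, t_3, \cdot)$, each $t_j$-differentiation loses at most $O(N_j^{-1})$ times a fixed power of $T$ and of $1 + x \ll 1 + T^{2} q^\varepsilon$, all of which is absorbed into $q^\varepsilon$ by the running assumption \eqref{eq:Tqsize}. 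Thus repeated integration by parts in $t_j$ forces $K$ to be negligible unless $|m_j| \ll (C/N_j) q^\varepsilon$ for every $j$ with $m_j \neq 0$ (the condition is automatic for the zero $m_j$'s). Second, once this localization holds, we apply the direct bounds $|J^+(x,\cdot)| \ll xT$ from Lemma \ref{lemma:Jpluspropertiesxsmall} and $|J^-(x,\cdot)| \ll x^{1-\delta} T^{1+\delta} \ll xT \cdot q^\varepsilon$ from Lemma \ref{lemma:Jminuspropertiesxsmall} (for a suitably small $\delta > 0$) to obtain
\begin{equation*}
|K| \ll q^\varepsilon \int_{t_j \asymp N_j} T \cdot \frac{\sqrt{t_1 t_2 t_3}}{c} \, dt_1\, dt_2\, dt_3 \ll T \cdot N_1 N_2 N_3 \cdot \frac{\sqrt{N_1 N_2 N_3}}{C} \cdot q^\varepsilon,
\end{equation*}
which is precisely \eqref{eq:KboundOther}.

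The main technical obstacle is verifying that the $t_j$-derivatives of the composite function $J^\pm(x(t_1,t_2,t_3), t_1, t_2, t_3, \cdot)$ are bounded by $O(N_j^{-1} T^D)$ for some fixed $D$, despite the factor $x^k$ appearing in the derivative bound of Lemma \ref{lemma:Jpluspropertiesxsmall}. Because $x \ll T^{2} q^\varepsilon$ in the non-oscillatory regime, and because $W_\pm$ is genuinely $1$-inert after the stationary phase analysis in the oscillatory regime, these excess powers of $T$ and $x$ are absorbed into $q^\varepsilon$ by \eqref{eq:Tqsize}. This bookkeeping is essentially identical to that carried out for $K$ in \cite[\S 10.4]{PetrowYoung}, the only difference being that the integration by parts step is performed in the $t_j$-variable corresponding to a vanishing $m_j$ rather than to a nonzero one.
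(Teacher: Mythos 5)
Your proof is correct and takes essentially the same approach as the paper: repeated integration by parts in the relevant $t_j$-variables to show negligibility (in the oscillatory regime, and to localize $|m_j| \ll (C/N_j)q^\varepsilon$ in the non-oscillatory regime), followed by a trivial estimate using \eqref{eq:Jplusbound} and \eqref{eq:Jminusbound}. The paper's proof is terser, simply citing \cite[Lem.\ 8.1]{BKY} for the integration-by-parts steps and pointing to \eqref{eq:Jplusbound}/\eqref{eq:Jminusbound} for the final bound; your write-up fills in the same bookkeeping explicitly (using Lemmas \ref{lemma:JplusAsymptoticExpansion} and \ref{lemma:JminusBound} for the oscillatory case and the chain-rule accounting for the $x^{\pm k}$ factors), which is consistent with the paper's intent.
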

\begin{proof}
The fact that $K$ is small if \eqref{eq:OscCondition} holds follows from repeated integration by parts (see \cite[Lem.\ 8.1]{BKY} for instance).  If \eqref{eq:NonOscCondition} holds, then another repeated integration by parts argument shows that the integral is small if there exists $\eps>0$ such that $|m_j| \gg \frac{C}{N_j} q^{\varepsilon}$ for some $j$.  Finally, the bound \eqref{eq:KboundOther} follows from trivially estimating the integral defining $K$, using \eqref{eq:Jplusbound} or \eqref{eq:Jminusbound}.
\end{proof}

\section{Completing the proof of Theorem \ref{thm:mainthmMaassEisenstein}}
\label{section:Finale}
Here we finish the proof of the bounds $\mathcal{T}^\pm, \mathcal{T}_0^\pm \ll_{\eps} T^B q^{\eps}$ (for definitions, see \eqref{eq:Tpmdef} and \eqref{eq:T0pmdef}), which will complete the proof of Theorem \ref{thm:mainthmMaassEisenstein}.

We only deal with the case that $\epsilon_j=1$ for all $j=1,2,3$. The other cases are similar. Recall the definition of $\mathcal{T}^\pm$ from \eqref{eq:Tpmdef}: 
\begin{equation*}
\mathcal{T}^\pm:= \frac{1}{C  \sqrt{N_1 N_2 N_3}} \sum_{\substack{m_1, m_2, m_3, r \geq 1 \\ (m_1, r) = 1}} G(m_1,m_2,m_3 ; qr) K^{\pm}(m_1,m_2,m_3, qr) .
\end{equation*}
Using \eqref{eq:GHchirelation}, we have
\begin{equation*}
 |\mathcal{T}^\pm| \ll \frac{1}{C^{2} q \sqrt{N_1 N_2 N_3}} 
\Big| \sum_{\substack{m_1, m_2, m_3, r \geq 1 \\ (m_1, r) = 1}} e_{qr}(m_1 m_2 m_3) K^{\pm}(m_1, m_2, m_3, qr) H_{\chi}(\pm m_1, m_2, m_3, r) \Big|.
\end{equation*}
Letting $N= N_1 N_2 N_3$, 
the behavior of $K$ depends on whether or not
\begin{equation}
\label{eq:oscillatorycondition}
 \frac{\sqrt{N}}{C} \gg T^2 q^{\varepsilon}.
\end{equation}

\textbf{Oscillatory case.}  Suppose \eqref{eq:oscillatorycondition} holds for some $\eps>0$.  By Lemma \ref{lemma:Kbound}, only the case of $K^{+}$ is relevant, in which case we have (recalling \eqref{eq:Zdef})
\begin{multline}
\label{eq:TfancyBound1}
 |\mathcal{T}^{+}| 
 \ll \frac{T^2}{ q  M} 
 \Big| \int_{|{\bf u}| \ll q^{\varepsilon}} \int_{|y| \ll q^{\varepsilon}} F({\bf u};y) q^{-iy}
 M_1^{u_1} M_2^{u_2} M_3^{u_3} (C/q)^{u_4}
\\
Z(u_1-iy, u_2-iy, u_3-iy, u_4+iy) d{\bf u} dy \Big|,
\end{multline}
plus a small error term, where $M = M_1 M_2 M_3$.  Here we initially take $\text{Re}(u_j) = 1+\varepsilon$ for all $j$.
According to Lemma \ref{lemma:Zproperties}, 
write $Z= Z_0 + Z_1$.  For $Z_0$, we keep the lines at $1+\varepsilon$, while for $Z_1$ we move them to $1/2+\varepsilon$.  By the decay properties of $F$, the horizontal contour integrals arising from these contour shifts are small ($\ll q^{-100}$, say), and we will not mention them further. 
Thus we obtain
\begin{equation}
\label{eq:fancyTbound}
 \mathcal{T}^{+} \ll \frac{q^{\varepsilon} T^2}{q M} \Big(\frac{MC}{q} + \frac{\sqrt{MC}}{\sqrt{q}} q^{3/2}\Big)
 \ll q^{\varepsilon} T^2 \Big(\frac{C}{q^2} + \frac{\sqrt{C}}{N^{1/4}}\Big),
\end{equation}
using that $K^{+}$ is very small unless $M \asymp \sqrt{N}$ in this oscillatory case.
Since $C T^2 \ll N^{1/2}  \ll (qT)^{3/2+\varepsilon}$ (from \eqref{eq:oscillatorycondition} and \eqref{eq:NvariableSizes}), we have $\mathcal{T}^\pm \ll T  q^{\varepsilon}$ (using $T \ll q^{\eta}$ for some $\eta > 0$ small).

\textbf{Non-oscillatory case.}  
The method of estimation is similar in the case that $\frac{\sqrt{N}}{C} \ll T^2 q^{\varepsilon}$, but we use  Lemma \ref{lemma:KboundNonOscillatory} in place of Lemma \ref{lemma:Kbound}.  From the terms with $m_j \asymp M_j$, we obtain that the contribution to $\mathcal{T}^\pm$ is
\begin{equation*}
  \ll \frac{N T }{C^3  q} \Big| \int_{|t| \ll q^{\varepsilon} +P} f(t)
 \int_{|{\bf u}| \ll T^2 q^{\varepsilon}} M_1^{u_1} M_2^{u_2} M_3^{u_3} \Big(\frac{C}{q}\Big)^{u_4}
 \frac{F({\bf u})}{q^{it}} 
 Z(u_1-it, u_2-it, u_3-it, u_4+it) d{\bf u} dt \Big|,
\end{equation*}
where $P= M/C$. By the large sieve-like bound \eqref{eq:LSlike}, we have that the contribution to the above from $Z_1$, say $\mathcal{T}^\pm_1$, satisfies the bound
\begin{equation*}
 \mathcal{T}^\pm_1 \ll_{\eps} \frac{N T  q^{\varepsilon}}{C^3 q}  \frac{\sqrt{M C}}{\sqrt{q}} q^{3/2} \Big(1+\frac{\sqrt{M }}{\sqrt{C}} \Big)
 T^2
 .
\end{equation*}
In this case, $M  \ll_{\eps} \frac{C^3}{N} q^{\varepsilon}$, and so this bound becomes
\begin{equation}
\label{eq:T1boundNonOscillatory}
\mathcal{T}^\pm_1 \ll_{\eps} q^{\varepsilon} T^{3} (\frac{\sqrt{N}}{C} + 1) \ll T^{5} q^{\varepsilon}.
\end{equation}

Next consider the contribution from $Z_0$, say $\mathcal{T}^\pm_{00}$.  If $P \gg q^{\varepsilon}$ for some $\eps>0$, 
then we may assume $f$ is supported on $|t| \asymp P$, and
we shift the contours to the $(1/2+\varepsilon)$-line.  No poles are crossed during this procedure since they occur at height $t$, and the horizontal integrals arising from this contour shift are negligible since $F$ is small at this height.
By the final sentence of Lemma \ref{lemma:Zproperties}, the bound we obtain on $\mathcal{T}^\pm_{00}$ is no worse than the bound on $\mathcal{T}^\pm_1$ given in \eqref{eq:T1boundNonOscillatory}.  

Finally, consider the case $P \ll_{\eps} q^{\varepsilon}$, that is, $M  \ll_{\eps} C q^{\varepsilon}$.  Here we keep the contours at the $(1+\varepsilon)$-line, giving
\begin{equation*}
 \mathcal{T}^\pm_{00} \ll_{\eps} \frac{N T }{C^3 q} \frac{M  C}{q} T^2 q^{\varepsilon} 
 \ll \frac{N T^{3} }{C q^2} q^{\varepsilon} =  T^{3}  \frac{\sqrt{N}}{C} \frac{\sqrt{N}}{q^2} q^{\varepsilon} \ll T^{\frac{13}{2}} q^{-\frac12+\varepsilon},
\end{equation*}
using \eqref{eq:NvariableSizes},
which is $\ll_{\eps}   q^{\varepsilon}$ taking $\eta \leq 1/13$ in \eqref{eq:Tqsize}.

\textbf{The cases with some $m_j = 0$.}
We will estimate $\mathcal{T}^\pm_0$ by trivial bounds.
By Lemma \ref{lemma:KboundOther}, $K(m_1, m_2, m_3, c)$ is very small in this case, unless we are in the non-oscillatory situation \eqref{eq:NonOscCondition}.  

Using 
Lemma \ref{lemma:KboundOther}, we deduce 
\begin{equation}
\label{eq:SfromSomeMjZero}
\mathcal{T}_0^\pm\ll_{\eps}  \frac{TN }{C^2} q^{\varepsilon} \sum_{r \asymp C/q} \sum_{\substack{m_1 m_2 m_3 =0 \\ |m_j| \ll_{\eps} M_j}}  |G(m_1, m_2, m_3 ; qr)|, \qquad M_j := \frac{C}{N_j} q^{\varepsilon},
\end{equation}
plus a small error term.  
Recall the bound \eqref{eq:GtrivialBound},
and that $G(m_1, m_2, m_3, qr) = 0$ if $(m_1, r) \neq 1$.

First consider the terms with $m_3 = 0$ and $m_1, m_2 \neq 0$.  Their contribution to \eqref{eq:SfromSomeMjZero} is
\begin{equation}
\label{eq:BoundFromm3=0}
  \ll_\eps  \frac{TN}{C^2} \frac{C}{q} M_1 M_2  \frac{q^{\varepsilon}}{C q} 
  \ll T  \frac{N_3}{q^2} q^{\varepsilon} \ll T^3 q^{\varepsilon},
\end{equation}
using \eqref{eq:NvariableSizes}.
The case with $m_2 = 0$ and $m_1, m_3 \neq 0$ is essentially identical to the previous case, but
the case with $m_1 = 0$ and $m_2, m_3 \neq 0$ is slightly different because of the condition $(m_1, r) = 1$.  The $r$-sum collapses to $r=1$, and this sum is even smaller than that appearing in the previous cases (essentially, the factor $\frac{C}{q}$ may be improved to $1$).

Next consider the terms with two $m_j=0$, the hardest one being $m_2 = m_3 = 0$.  Compared to \eqref{eq:BoundFromm3=0}, the difference is that the factor $M_2$ is replaced by $q$, leading to the bound
\begin{equation*}
\ll_\eps  \frac{TN}{ C^2} \frac{C}{q} M_1 q  \frac{q^{\varepsilon}}{C q} 
  \ll T  \frac{N_2 N_3}{q  C} q^{\varepsilon} \ll T  \frac{\sqrt{N}}{C} \frac{\sqrt{N_2 N_3}}{q} q^{\varepsilon}.
\end{equation*}
Using $\frac{\sqrt{N}}{C} \ll_{\eps} T^2 q^{\varepsilon}$ and $N_2 N_3 \ll_{\eps} (qT)^{2+\varepsilon}$ (recall \eqref{eq:NvariableSizes})  shows this is $\ll_{\eps} T^4 q^{\varepsilon}$.  If $m_1$ is one of the two $m_j$'s equal to zero, then the numerology changes enough to be worthy of mention (we no longer have $N_1 N_3 \ll_{\eps} (Tq)^{2+\varepsilon}$, but on the other hand the $r$-sum collapses, so we may assume $C \asymp q$ since $c = qr \asymp C$).  Say $m_1 = m_3 = 0$ and $m_2 \neq 0$.  Then the contribution of these terms to $\mathcal{T}_0^\pm$ is
\begin{equation*}
\ll_{\eps}   \frac{TN}{ C^2} q M_2  \frac{q^{\varepsilon}}{C q} 
\ll
   \frac{TN}{ C^3} M_2   q^{\varepsilon}
 \ll
    \frac{T N_1 N_3}{C^2} q^{\varepsilon} \ll T^5 q^{\varepsilon},
\end{equation*}
where we used $\frac{C}{q} \asymp 1$, $M_2 \ll \frac{C}{N_2} q^{\varepsilon}$, and $N_1 N_3 \leq N \ll C^2 T^4 q^{\varepsilon}$.  

Finally, the terms of $\mathcal{T}_0^\pm$ with $m_1 = m_2 = m_3 = 0$ (hence $r=1$, $C \asymp q$) are bounded by
\begin{equation*}
\ll_{\eps}  \frac{TN}{C^2}  \frac{q}{C} q^{\varepsilon} 
 \ll  T^5 q^{\varepsilon}.
\end{equation*}

This completes the proof of Theorem \ref{thm:mainthmMaassEisenstein}.

\section{Sketch of proof of Theorems \ref{thm:mainthmHybridVersion} and \ref{thm:mainthmHolomorphic}}
\label{section:Epilogue}
In this section, we outline what changes are needed to prove Theorem \ref{thm:mainthmHybridVersion}.  
The problem is arithmetically identical to the proof of Theorem \ref{thm:mainthmMaassEisenstein}, but the Archimedean aspects are different.  Recall we have assumed that $T \gg q^{\eta}$ for some small but fixed $\eta > 0$.

The first change is that instead of using $h_0(t)$ defined by \eqref{eq:h0def}, we take
\begin{equation*}
 h_0(t) = \frac{1}{\cosh\big(\frac{t-T}{\Delta}\big)} + \frac{1}{\cosh\big(\frac{t+T}{\Delta}\big)},
\end{equation*}
as in \cite[\S 4]{YoungCubic}, where $\Delta = T^{\varepsilon}$ for some $\eps>0$.  A more precise version of Lemma \ref{lemma:Vproperties} is developed in \cite[\S 5]{YoungCubic}, showing that $V_j(y,t)$ has an asymptotic expansion with leading term of the form $W_j(\frac{y}{T^j})$, where $W_1$ and $W_2$ are fixed smooth weight functions, satisfying $x^k W_j^{(k)}(x) \ll_{A} (1+|x|)^{-A}$ for all $A>0$.
The analogs of the estimates for $J^{\pm}$ appear as \cite[Lem.\ 7.1, 7.2]{YoungCubic}, while the crucial integral representations of $K(m_1, m_2, m_3, c)$ are treated in \cite[Lem.\ 8.1]{YoungCubic} in place of those covered in Section \ref{section:WeightFunctions2}.  Note that in \cite[(8.5)]{YoungCubic}, the contours were set at $\text{Re}({\bf y}) = \text{Re}(u) = 0$.  To accommodate more general choices of contour, the formula \cite[(8.4)]{YoungCubic} should be updated to state
\begin{equation*}
 K^{+}(m_1, m_2, m_3, c) = 
 \frac{ C^{3/2} \Delta T (N_1 N_2 N_3)^{1/2} e_c(-m_1 m_2 m_3)}{(M_1 M_2 M_3)^{1/2}}  L(m_1, m_2, m_3, c),
\end{equation*}
plus a small error term, where
\begin{multline*}
L(m_1, m_2, m_3, c) =  \frac{1}{V} \int_{|{\bf u}| \ll (qT)^{\varepsilon}} \int_{|y| \ll U} F({\bf u};y) \Big(\frac{|m_1 m_2 m_3|}{c }\Big)^{iy} 
\\
\Big(\frac{M_1}{|m_1|}\Big)^{u_1} \Big(\frac{M_2}{|m_2|}\Big)^{u_2} \Big(\frac{M_3}{|m_3|}\Big)^{u_3} \Big(\frac{C}{c}\Big)^{u_4} d{\bf u} dy,
\end{multline*}
where $V=T$ and 
\begin{equation*}
 U = \frac{T^2 C}{(N_1 N_2 N_3)^{1/2}}.
\end{equation*}
Moreover, $L$ vanishes (i.e., $K^+$ is very small) unless
\begin{equation*}
 C \ll_{\eps} \frac{(N_1 N_2 N_3)^{1/2}}{\Delta^{1-\varepsilon} T} 
 \qquad
 \text{and}
 \qquad
 M_j \asymp \frac{(N_1 N_2 N_3)^{1/2}}{N_j}, j=1,2,3.
\end{equation*}
The formula for $K^{-}$ can be adapted in a similar way, but we leave out the details for brevity.

Now if we follow along the details of the \textbf{Oscillatory case} from Section \ref{section:Finale}, we obtain that the contribution to $\mathcal{T}^\pm$ from these terms is (in place of \eqref{eq:TfancyBound1})  
\begin{multline*}
 |\mathcal{T}^\pm| 
 \ll \frac{\Delta T}{q C^{1/2} M^{1/2} V} 
 \Big| \int_{|{\bf u}| \ll q^{\varepsilon}} \int_{|y| \ll U } F({\bf u};y) q^{-iy}
 M_1^{u_1} M_2^{u_2} M_3^{u_3} (C/q)^{u_4}
\\
Z(u_1-iy, u_2-iy, u_3-iy, u_4+iy) d{\bf u} dy \Big|,
\end{multline*}
plus a small error term.  We decompose $Z$ as $Z_0 + Z_1$, and for $Z_1$ we shift the contour to the $(1/2+\varepsilon)$-lines, giving that its contribution to $\mathcal{T}^\pm$ is
\begin{equation*}
  \ll_{\eps} 
 \frac{1}{ q} 
\frac{  \Delta T}{C^{1/2} M^{1/2}}
\frac{U}{V} 
 \frac{\sqrt{MC}}{\sqrt{q}} q^{3/2}
 T^{\varepsilon}.
\end{equation*}
Using $\frac{U}{V} \ll_{\eps} \Delta^{-1+\varepsilon}$ shows this term is $\ll_{\eps} T^{1+\varepsilon}$, which is the bound required for Theorem \ref{thm:mainthmHybridVersion}.  Next we turn to $Z_0$.  For this term, it is helpful to point out that in fact $F({\bf u};y)$ is very small unless $|y| \asymp U$, which was a property that was not stated in  \cite[Lem.\ 8.1]{YoungCubic}, but was developed in the proof (see \cite[p.1569]{YoungCubic}).  This shows that if $U \gg T^{\varepsilon}$ for some $\eps>0$, then in the estimation of $Z_0$ we can shift the contours to the $(1/2+\varepsilon)$-lines without crossing poles.  The bound obtained on $Z_0$ is no larger than the one obtained on $Z_1$.  If $U \ll_{\eps} T^{\varepsilon}$, then we keep the contours at the $(1+\varepsilon)$-lines, giving that their contribution to $\mathcal{T}^\pm$ is
\begin{equation*}
 \ll_{\eps} \frac{\Delta T}{q C^{1/2} M^{1/2} V} \frac{MC}{q} T^{\varepsilon} \ll q^{-1/2} \Delta^{1/2} T^{1+\varepsilon},
\end{equation*}
which is stronger than the bound obtained on $Z_1$.

The \textbf{Non-oscillatory case} is similar, and we omit the details for brevity.

Finally, we need to consider the terms where some $m_j=0$.  These cases were overlooked in \cite{YoungCubic}, so we take this opportunity to correct this omission.  The first claim is that $K^+(m_1, m_2, m_3, c)$ is very small if some $m_j = 0$.  This follows from the fact that $B^+(x)$ (the analog of $J^+(x,\cdot)$) is very small unless $x \gg_{\eps} \Delta T^{1-\varepsilon}$, in which case it has an asymptotic expansion of the form $\frac{\Delta T}{\sqrt{x}} \cos(x+\phi(x,T))$, where $\phi(x,T) = - 2T^2/x + \dots$.   Then repeated integration by parts in the $t_j$ variable (where $m_j = 0$) shows that $K^+$ is small.  Therefore, it suffices to consider $K^-$.  We claim that if some $m_j = 0$ then
\begin{equation}
\label{eq:KminusBoundSomemjZero}
 K^{-}(m_1, m_2, m_3, c) \ll_{\eps}  \Delta N T^{\varepsilon}.
\end{equation}
The trivial bound arising from \cite[Lem.\ 7.2]{YoungCubic} would give a bound of the form $NT$, so \eqref{eq:KminusBoundSomemjZero} saves a factor of $T/\Delta$ over this.  We now prove the claim.  According to \cite[(7.3)]{YoungCubic}, we have
\begin{equation*}
 B^-(x) = \Delta T \int_{|v| \leq \Delta^{-1+\varepsilon}}
 \cos(x \sinh v) e^{2ivT} g(\Delta v) dv + O_{A}(T^{-A}),
\end{equation*}
where $g^{(j)}(x) \ll_{A} (1+|x|)^{-A}$ for all $A>0$.  Moreover, $B^-(x)$ is very small unless $x \asymp T$.  Here $B^{-}(x)$ is the analog of $J^{-}(x,\cdot)$.  To fix the notation, say $m_3 = 0$ (the cases with $m_1 = 0$ or $m_2 = 0$ are identical).  Then the $t_3$-integral inside the definition of $K^{-}$ takes the form
\begin{equation*}
 \intR w(t_3, \cdot) \cos\Big(\frac{4 \pi \sqrt{t_1 t_2 t_3}}{c} \sinh v \Big) dt_3,
\end{equation*}
where $w(t_3, \cdot)$ is supported on $t_3 \asymp N_3$, and satisfies $t_3^j \frac{d^j}{dt_3^j} w(t_3, \cdot) \ll 1$.  Repeated integration by parts (see \cite[Lem.\ 8.1]{BKY}) therefore shows that $K^-(m_1, m_2, m_3, c)$ is very small unless 
\begin{equation*}
 \frac{\sqrt{N}}{C} |v| \ll_{\eps} T^{\varepsilon}.
\end{equation*}
On the other hand, we also know $K^-$ is very small unless $x \asymp \frac{\sqrt{N}}{C} \asymp T$, so inside the definition of $K^-$ we may further restrict $v$ by $|v| \ll_{\eps} T^{-1+\varepsilon}$.  The trivial bound on $K^{-}$ now leads to 
\eqref{eq:KminusBoundSomemjZero}.  An integration by parts argument in the $t_1, t_2$ variables shows that $K^{-}(m_1, m_2, 0, c)$ is very small unless $|m_j| \ll_{\eps} \frac{C}{N_j} T^{\varepsilon}$, for $j=1,2$.  

At this point, we carry through the same argument used in Section \ref{section:Finale}, using \eqref{eq:GtrivialBound} as before, but using  \eqref{eq:KminusBoundSomemjZero} in place of Lemma \ref{lemma:KboundOther}.  As a representative sample, consider the contribution from $m_3 = 0$, $m_1, m_2 \neq 0$.  These terms give
\begin{equation*}
 \ll_{\eps} \frac{1}{C \sqrt{N}} \frac{\Delta N}{Cq} T^{\varepsilon} 
 \sum_{r \asymp \frac{C}{q}} \sum_{1 \leq |m_1| \ll_{\eps} \frac{C}{N_1} T^{\varepsilon}}
 \sum_{1 \leq |m_2| \ll_{\eps} \frac{C}{N_2} T^{\varepsilon}}
 (m_2, q)(m_3, q) 
\ll \frac{\Delta N_3}{q^2 T} T^{\varepsilon},
 \end{equation*}
using $C \asymp \frac{\sqrt{N}}{T}$.  Since $N_3 \ll_{\eps} (q T)^{2+\varepsilon}$, this is $\ll_{\eps} \Delta T^{1+\varepsilon}$, which is the bound required for Theorem \ref{thm:mainthmHybridVersion}.  Similar arguments may be used to treat the other terms with $m_1 m_2 m_3 = 0$, and we leave the details to the diligent reader.

The proof of Theorem \ref{thm:mainthmHybridVersion} is now complete.

Finally, we discuss the proof of Theorem \ref{thm:mainthmHolomorphic}.  The framework of \cite{YoungCubic} placed both the Maass forms and holomorphic forms on an equal footing, and so the proof of the hybrid bound \eqref{eq:holomorphicFormsBoundkappaHybridVersion} is now essentially identical to that of Theorem \ref{thm:mainthmHybridVersion}.  In order to derive the bound \eqref{eq:holomorphicFormsBoundkappaSmallVersion}, one may adapt the material from Section \ref{section:WeightFunctions1}.  It is not difficult to prove an analogous version of Lemma \ref{lemma:Vproperties} (the use of Stirling's formula is slightly different).  The use of the Bruggeman-Kuznetsov formula will then be replaced by the Petersson formula and Poisson summation over $\kappa$ (see \cite[p.85-86]{IwaniecClassical}).  One can then derive properties of the resulting weight functions which are analogous to those of $J^{\pm}$ presented in Sections \ref{section:Jplus} and \ref{section:Jminus}.  The properties of $K^{\pm}$ derived in Section \ref{section:WeightFunctions2}  then carry over with minimal changes, and the final steps of Section \ref{section:Finale} then proceed in the same fashion as in the proof of Theorem \ref{thm:mainthmMaassEisenstein}.

\end{document}